\theoremstyle{plain}
   \newtheorem{theorem}{Theorem}[section]
   \newtheorem{proposition}[theorem]{Proposition}
   \newtheorem{lemma}[theorem]{Lemma}
   \newtheorem{corollary}[theorem]{Corollary}
   \newtheorem{problem}{Problem}
\theoremstyle{definition}
   \newtheorem{definition}{Definition}[section]
   \newtheorem{notation}{Notation}[section]
\theoremstyle{remark}
   \newtheorem{remark}{Remark}[section]
\author[J.~Borcea]{Julius Borcea}
\address{Department of Mathematics, Stockholm University, SE-106 91 Stockholm,
Sweden}
\email{julius@math.su.se}
\author[P.~Br\"and\'en]{Petter Br\"and\'en}
\address{Department of Mathematics, Royal Institute of Technology, 
SE-100 44 Stockholm, Sweden}
\email{pbranden@math.kth.se}
\keywords{Phase transitions, Lee-Yang theory, P\'olya-Schur theory, 
linear operators, 
polarization, stable polynomials, 
graph polynomials, 
symmetrization, 
apolarity, Sz\'asz principles, multiplier sequences}
\subjclass[2000]{Primary: 47B38; Secondary: 05A15, 05C70, 30C15, 32A60, 
46E22, 82B20, 82B26}
\thanks{The first author is supported by the Royal Swedish Academy 
of Sciences. The second author is 
supported by the G\"oran Gustafsson Foundation.}
\numberwithin{equation}{section}
\newcommand{\NN}{\mathbb{N}}
\newcommand{\bH}{\mathbb{H}}
\newcommand{\NV}{\mathcal{N}}
\newcommand{\LY}{\mathcal{LY}}
\newcommand{\HH}{\mathcal{H}}
\newcommand{\MM}{\mathcal{M}}
\newcommand{\HHH}{\overline{\mathcal{H}}}
\newcommand{\RR}{\mathbb{R}}
\newcommand{\CC}{\mathbb{C}}
\newcommand{\DD}{\mathbb{D}}
\newcommand{\KK}{\mathbb{K}}
\newcommand{\CM}{\mathbb{C}_{ {MA}}}
\newcommand{\te}{\theta}
\newcommand{\al}{\alpha}
\renewcommand{\Im}{{\rm Im}}
\renewcommand{\Re}{{\rm Re}}
\def\newop#1{\expandafter\def\csname #1\endcsname{\mathop{\rm
#1}\nolimits}}
\begin{document}

\title[The Lee-Yang and P\'olya-Schur Programs. I.]{The Lee-Yang 
and P\'olya-Schur Programs. I. \\ Linear Operators Preserving Stability}

\begin{abstract} 
In 1952 Lee and Yang proposed the program of analyzing phase 
transitions in terms of zeros of 
partition functions. Linear 
operators preserving non-vanishing properties are essential in this 
program and various contexts in complex analysis, 
probability theory, combinatorics, and matrix theory. 
We characterize all linear operators on finite or infinite-dimensional
spaces of multivariate polynomials preserving 
the property of being non-vanishing whenever the variables are in 
prescribed open circular domains. In particular, this 
solves the higher dimensional counterpart of 
a long-standing classification problem originating from classical works of 
Hermite, Laguerre, Hurwitz and P\'olya-Schur on univariate polynomials with 
such properties. 
\end{abstract}

\maketitle

\tableofcontents

\section*{Introduction}\label{s-1}

Zero loci of multivariate polynomials or transcendental entire functions and 
their dynamics under linear transformations are central topics in geometric 
function theory which in recent years has found applications in 
statistical mechanics, combinatorics, probability theory and 
matrix theory \cite{BB-II,BBS2,BBCV,BBL,Br1,COSW,sok}. For instance, as shown 
in \cite{BB-II,COSW,LS}, the Lee-Yang program on phase 
transitions in equilibrium statistical mechanics \cite{LY,LY1}
is intimately related to the problems of 
describing linear operators on polynomial spaces that preserve the property 
of being non-vanishing when the variables are in prescribed 
subsets of $\CC^n$. 
In this paper we fully solve these problems 
for (products of) open circular domains. In particular, this 
accomplishes a long-standing classification program 
originating from classical works of Hermite, Laguerre, Hurwitz and 
P\'olya-Schur on univariate polynomials with such properties 
\cite{CC1,cso,lag,PS,PSz,RS}. 

Given an integer $n\ge 1$ and $\Omega \subset \CC^n$ we say 
that $f\in\CC[z_1,\ldots,z_n]$ is 
$\Omega$-{\em stable}
if $f(z_1,\ldots,z_n)\neq 0$ whenever $(z_1,\ldots,z_n)\in\Omega$. 
A $\KK$-linear operator 
$T:V\to \KK[z_1,\ldots,z_n]$, where $\KK=\RR$ or $\CC$ and 
$V$ is a subspace of $\KK[z_1,\ldots,z_n]$, is said to {\em preserve} 
$\Omega$-{\em stability} if for any 
$\Omega$-stable polynomial $f\in V$ the 
polynomial $T(f)$ is either 
$\Omega$-stable or $T(f)\equiv 0$.
For $\kappa=(\kappa_1,\ldots,\kappa_n) \in \NN^n$ let 
$\KK_\kappa[z_1,\ldots, z_n]
=\{f\in \KK[z_1,\ldots, z_n]:\deg_{z_i}(f)\le \kappa_i,1\le i\le n\}$, 
where $\deg_{z_i}(f)$ is the degree of $f$ in $z_i$. If $\Psi\subset \CC$ and 
$\Omega=\Psi^n$ then $\Omega$-stable polynomials will also be referred to as 
$\Psi$-stable. The fundamental classification problems that we address below  
are as follows.

\begin{problem}\label{prob1}
Characterize all linear operators 
$T:\KK_\kappa[z_1,\ldots, z_n]\to \KK[z_1,\ldots,z_n]$ that preserve 
$\Omega$-stability for a given set $\Omega\subset\CC^n$ and 
$\kappa\in\NN^n$.
\end{problem}

\begin{problem}\label{prob2}
Characterize all linear operators 
$T:\KK[z_1,\ldots, z_n]\to \KK[z_1,\ldots,z_n]$ that preserve 
$\Omega$-stability, where $\Omega$ is a prescribed subset of 
$\CC^n$.
\end{problem}

For $n=1$ these problems were stated in this general form in 
\cite{CC1,cso}, see also \cite{iserles1} and \cite[pp.~182-183]{RS}. Their
natural multivariate analogs given above were formulated explicitly in 
\cite{BBS3,BBCV}, thereby encompassing essentially all similar questions or 
variations thereof that have been much studied for more than a century.
Note that for $n=1$, $\KK=\RR$ and $\Omega=\{z\in\CC:\Im(z)>0\}$ 
Problems \ref{prob1}--\ref{prob2} amount to classifying linear operators that 
preserve the set of real polynomials with all real zeros. This question has 
a long and distinguished history going back to Laguerre and 
P\'olya-Schur, see \cite{BBCV,CC1} and the references 
therein. In particular, in \cite{PS} P\'olya and Schur characterized all 
diagonal operators with this property (so-called multiplier sequences), 
which paved the way for numerous subsequent investigations 
\cite{BBS1,BBS3,CCS,CC2,cso,fisk,iserles1,Le,M,pol-coll,RS,schur}.
However, it was not until very recently that full solutions to this question 
-- and, more generally, to Problems \ref{prob1}--\ref{prob2} for $n=1$ and any 
open circular domain $\Omega$ -- were obtained in \cite{BBS1}.  

In physics literature (\cite{sco-sok}, \cite[\S 5]{sok}) one sometimes 
distinguishes between ``soft'' and ``hard'' theorems asserting the 
non-vanishing of partition functions in certain regions. This dichotomy stems 
from ``soft-core'' vs.~``hard-core'' pair interactions in lattice-gas models 
and does not refer to the level of difficulty in proving such theorems but 
rather to the fact that in some sense ``soft'' theorems are constraint-free 
while ``hard'' theorems involve constraints of various kinds, such as the 
maximum degree of a graph. By analogy with this terminology, one may say 
that results pertaining to Problem \ref{prob1} are ``hard'' or ``algebraic'' 
(bounded degree) while those for Problem \ref{prob2} are ``soft'' or 
``transcendental'' (unbounded degree). For $n\ge 1$ several partial 
results of the latter type were established and appplied to various questions 
in e.g.~combinatorics and statistical mechanics in the past two decades. 
In \cite{LS} Lieb and Sokal proved a 
general result \cite[Proposition~2.2]{LS} pertaining to Problem~\ref{prob2} 
and more results of the same kind were recently obtained in \cite{COSW,W2}. 
Another contribution to this subject is Hinkkanen's Schur-Hadamard 
composition theorem \cite{hink}. Progress on Problem~\ref{prob2}  
was recently made in \cite{BBS3} where a complete 
solution was given for finite order partial differential operators 
when $\Omega=\Psi^n$ and $\Psi$ is an open half-plane. Further 
contributions to this problem have been reported in \cite{fisk}.

The main results of this paper provide complete solutions to 
Problems~\ref{prob1}--\ref{prob2} when 
$\Omega=\Omega_1\times\cdots\times\Omega_n$ and $\Omega_i$, $1\le i\le n$, 
are arbitrary open circular domains in $\CC$. We also characterize all 
linear transformations preserving the class of {\em Lee-Yang polynomials} 
defined with respect to any such set $\Omega$. In particular, the 
classification 
theorems of \cite{BBS1} may now be viewed as special cases of the theorems 
below from which they follow by setting $n=1$. To achieve this we need 
several new ideas and results. For instance, we define the 
{\em polarization} of a linear operator to reduce the sufficiency 
part for arbitrary degrees in the algebraic characterization to the case of 
linear transformations acting on polynomials of degree at most one in each 
variable. For the necessity part in the transcendental characterization 
we generalize Sz\'asz' inequalities \cite{szasz} to several variables. 
These are bounds on the 
coefficients of a stable polynomial that only depend on its first few 
non-zero coefficients. 

The solutions to Problems \ref{prob1}--\ref{prob2} 
may be summarized as follows: (essentially all) linear operators 
preserving $n$-variate stable/Lee-Yang polynomials are induced by 
$2n$-variate stable/Lee-Yang 
polynomials via appropriately defined symbol maps. 
 
In a sequel \cite{BB-II} to this paper 
we build on our classification theorems 
to develop a self-contained theory of 
multivariate stable polynomials. We therefore take care to rely on as 
few auxiliary results as possible in the process. 
Combined with the 
present work, the theory and applications in \cite{BB-II} also yield 
a natural framework for dealing in a uniform manner with 
Lee-Yang type problems in statistical mechanics, combinatorics, and geometric
function theory in one or several variables, thus contributing to  
Ruelle's quest \cite{ruelle2} for an appropriate mathematical 
context encompassing the celebrated Lee-Yang theorem \cite{LY} 
(locating the zeros of 
the partition function of the ferromagnetic Ising model on the imaginary axis 
in the complex fugacity plane) and its many modern relatives 
\cite{As,beau,BBCK,BBCKK,HL,KOS,LS,new1,new2,ruelle1,ruelle4,sco-sok}. 
This also illustrates Hinkkanen's observations about zeros of multivariate 
polynomials and 
their ``so far unnoticed connections to various other concepts in 
mathematics'' \cite{hink}, which are further substantiated by recent 
applications to probability theory, matrix theory and combinatorics 
\cite{BBS2,BBL,Br1,COSW,Lig1,ruelle-g2,sok2,W2}.

\section{Operator Symbols and $\Omega$-Stable Polynomials}\label{new-s-prel}

The general notion of $\Omega$-stability defined in the introduction extends 
classical univariate concepts such as Hurwitz or continuous-time 
stable polynomials and Schur or discrete-time stable polynomials 
(see, e.g., \cite{M,RS}). These correspond to $n=1$, 
$\Omega=\{z \in \CC : \Re(z) \ge 0\}$ and $n=1$, 
$\Omega=\{z \in \CC : |z|\ge 1\}$, respectively.

Let now $n$ be an arbitrary positive integer. For $\te\in [0,2\pi)$ set
$$ 
\bH_\te=\{z \in \CC: \Im(e^{i\te}z)>0\}.
$$
We find it convenient to work with the upper half-plane 
$$
H:=\bH_0=\{z \in \CC: \Im(z)>0\},
$$
therefore we will follow Levin's terminology \cite{Le} and refer to 
$H$-stable multivariate polynomials simply 
as {\em stable} polynomials (cf.~\cite{BBS1,BBS2,BBS3,BBL}). 
A stable polynomial with all real coefficients is called {\em real stable}. 
Clearly, a univariate 
real polynomial is stable if and only if all its zeros are real. 
We denote the sets of stable, respectively
real stable polynomials in $n$ variables by $\HH_n(\CC)$ and $\HH_n(\RR)$,
respectively.

If $\Omega = \bH_{\frac{\pi}{2}}=\{z \in \CC : \Re(z) >0\}$ 
then $\Omega$-stable polynomials are called {\em weakly Hurwitz stable}. In 
\cite{COSW} these are termed polynomials with the {\em half-plane 
property}. 

\subsection{Algebraic and Transcendental Symbols}\label{new-ss-not}

The solutions to Problems \ref{prob1}--\ref{prob2} for circular domains make 
use of appropriately defined operator symbols that we proceed to describe. 
For simplicity, in this section we only state the main theorems for linear 
operators preserving ($H$-)stability. 

Recall that $\KK_\kappa[z_1,\ldots, z_n]$, where $\KK=\RR$ or $\CC$ and 
$\kappa=(\kappa_1,\ldots,\kappa_n) \in \NN^n$, 
is the $\KK$-space of polynomials in variables $z_1,\ldots,z_n$ of 
degree at most $\kappa_i$ in $z_i$, $1\le i\le n$. If  
$\gamma=(\gamma_1,\ldots,\gamma_n)\in\NN^n$ we define the 
{\em algebraic symbol} of a linear operator 
$$
T: \KK_\kappa[z_1,\ldots, z_n] \rightarrow  \KK_\gamma [z_1,\ldots, z_n]$$
to be the polynomial 
$G_T(z,w) \in \KK_{\gamma \oplus \kappa}[z_1,\ldots, z_n, w_1,\ldots, w_n]$ 
given by 
$$
G_T(z,w)= T[(z+w)^\kappa]
= \sum_{\alpha \leq \kappa}\binom \kappa \alpha T(z^\alpha)w^{\kappa-\alpha}, 
$$
where $\gamma \oplus \kappa = (\gamma_1,\ldots, \gamma_n, \kappa_1, \ldots, 
\kappa_n)$ while $\leq$ denotes the standard product partial order on $\NN^n$ 
and we employ the usual multi-index notations $z=(z_1,\ldots, z_n)$, 
$w=(w_1,\ldots, w_n)$, $z^{\al}=\prod_{i=1}^{n}z_i^{\al_i}$, 
$zw=(z_1w_1,\ldots,z_nw_n)$ and 
$$
\binom \kappa \alpha = \begin{cases}
\prod_{j=1}^n \frac{\kappa_j!}{\alpha_j!(\kappa_j-\alpha_j)!} \mbox{ if } 
\alpha \leq \kappa, \\
0 \mbox{ otherwise}. 
\end{cases}
$$

Our first theorem provides an algebraic characterization of stability 
preservers on finite-dimensional complex polynomial spaces, which solves 
Problem \ref{prob1} for $\KK=\CC$ and $\Omega=H$.

\begin{theorem}\label{multi-finite-stab}
Let $\kappa \in \NN^n$ and $T : \CC_\kappa[z_1,\ldots, z_n] \rightarrow 
\CC[z_1,\ldots, z_n]$ be a linear operator. Then $T$ preserves stability 
if and only if either
\begin{itemize}
\item[(a)] $T$ has range of dimension at most one and is of the form 
$$
T(f) = \alpha(f)P,
$$
where $\alpha$ is a linear functional on $\CC_\kappa[z_1,\ldots, z_n]$ and 
$P$ is a stable polynomial, or 
\item[(b)] $G_T(z,w)\in \HH_{2n}(\CC)$. 
\end{itemize}
\end{theorem}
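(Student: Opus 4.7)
My plan is to handle the sufficiency of (b) by a polarization argument reducing to the multi-affine case $\kappa = \mathbf{1}$, and to establish necessity by analyzing the image of $T$ on the stable family $\{(z+w)^\kappa : w \in H^n\}$.

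Sufficiency of (a) is immediate, since $T(f) = \alpha(f) P$ is always zero or a scalar multiple of the stable polynomial $P$. For the sufficiency of (b), suppose $G_T \in \HH_{2n}(\CC)$ and $f \in \CC_\kappa[z]$ is stable. I would introduce auxiliary variables $z_i^{(1)}, \ldots, z_i^{(\kappa_i)}$ and let $\tilde f$ denote the unique symmetric multi-affine polarization of $f$. By the Grace--Walsh--Szeg\H{o} (GWS) theorem, $\tilde f$ is stable iff $f$ is stable. Polarizing $G_T$ independently in the $z$- and $w$-blocks yields a stable $2|\kappa|$-variable polynomial; this is the algebraic symbol of a multi-affine operator $\tilde T$ that, after symmetrization and diagonal restriction, recovers $T$. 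This reduces the problem to the case $\kappa = \mathbf{1}$.

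For the multi-affine case, I would use the contraction identity
\[
T(f)(z) = f^R(\partial_w)\, G_T(z, w)\Big|_{w=0},
\qquad f^R(u) := \sum_{S \subseteq [n]} a_S\, u^{[n]\setminus S},
\]
where $f(z) = \sum_S a_S z^S$; a direct expansion verifies this using $G_T(z,w) = \sum_S T(z^S) w^{[n]\setminus S}$. When $f$ factors as $\prod_i(c_i z_i + d_i)$ with $c_i \ne 0$, the identity collapses to $T(f)(z) = \left(\prod_i c_i\right) G_T(z, d/c)$, and stability of $f$ forces $d_i/c_i \in \overline{H}$, so Hurwitz's theorem combined with $G_T \in \HH_{2n}(\CC)$ yields stability (or vanishing) of $T(f)$. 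For a general multi-affine stable $f$---which need not factor, e.g.\ $z_1 + z_2$---I would appeal to a Lieb--Sokal-type contraction lemma (cf.~\cite[Prop.~2.2]{LS}): if $P \in \HH_{2n}(\CC)$ and $Q \in \HH_n(\CC)$, then $Q(-\partial_w) P(z, w)|_{w=0}$ is stable in $z$ or identically zero. Setting $Q(u) := f^R(-u)$, one checks (roots of $f^R$ are reciprocals of roots of $f$) that $Q$ is stable iff $f$ is, and $f^R(\partial_w) = Q(-\partial_w)$, so the lemma closes the argument.

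For necessity, $(z+w)^\kappa = \prod_i(z_i + w_i)^{\kappa_i}$ is stable on $H^{2n}$ because each $z_i + w_i$ lies in $H$ whenever $z_i, w_i \in H$. Hence for each fixed $w \in H^n$ the polynomial $G_T(\cdot, w) = T[(z+w)^\kappa]$ is stable in $z$ or identically zero. If the latter never occurs, $G_T$ is nonvanishing on $H^{2n}$ and (b) holds. Otherwise some $w_0 \in H^n$ satisfies $G_T(\cdot, w_0) \equiv 0$; expanding $G_T(z, w_0 + tv)$ as $t^m P_m(z) + O(t^{m+1})$ along generic complex lines, Hurwitz's theorem applied to $t^{-m} G_T(\cdot, w_0 + tv)$ forces $P_m$ to be stable, and tracking how these leading terms vary with $v$---together with the fact that $\{(z+w)^\kappa : w \in \CC^n\}$ spans $\CC_\kappa[z]$---shows that the family $\{G_T(\cdot, w)\}_{w \in H^n}$ is projectively one-dimensional, hence all $T(z^\alpha)$ are proportional to a single stable polynomial $P$, yielding (a). The principal obstacle is the multi-affine sufficiency: polarization is notationally involved but structurally straightforward, whereas the contraction step genuinely depends on the multivariate Lieb--Sokal lemma, which is the deepest analytic input.
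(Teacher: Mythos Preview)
Your sufficiency argument is essentially the paper's: polarize via Grace--Walsh--Szeg\H{o} to reduce to $\kappa=(1^n)$, then invoke Lieb--Sokal to handle the multi-affine contraction. The paper packages the last step slightly differently (it multiplies $w^{[n]}G_T(z,-w^{-1})$ by $f(v)$, applies Lieb--Sokal one variable at a time, and then sends $v\to 0$ by Hurwitz), but your formulation via $Q(-\partial_w)P|_{w=0}$ with $Q(u)=f^R(-u)$ is an equivalent repackaging, and your observation that $Q$ is stable iff $f$ is stable is correct (it is exactly Lemma~\ref{l-close}(3)).

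Your necessity argument, however, has a genuine gap. Knowing that each $G_T(\cdot,w)$ with $w\in H^n$ is stable or zero does \emph{not} by itself imply that their $\CC$-span---which is the full image of $T$---consists of stable polynomials: linear combinations of stable polynomials are typically not stable (e.g.\ $z_1-z_2$). Your ``leading-term'' analysis along lines through $w_0$ yields, by Hurwitz, that each directional leading coefficient $P_m(\cdot;v)$ is stable, but stability of these limits says nothing about their mutual proportionality, nor about the values $G_T(\cdot,w)$ away from $w_0$. The paper closes this with two separate lemmas you are missing. First, a perturbation lemma (Lemma~\ref{plenty}): for any $f\in\CC_\kappa[z]$ and any $W\in H^n$, the polynomial $(z+W)^\kappa+\epsilon f$ is stable for all small $\epsilon>0$; hence if $T[(z+W)^\kappa]\equiv 0$ for some $W$, then $\epsilon T(f)=T[(z+W)^\kappa+\epsilon f]$ is stable or zero for \emph{every} $f$, so the entire image of $T$ consists of stable polynomials. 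Second, a dimension lemma (Lemma~\ref{spaces}(ii)): any $\CC$-linear subspace of $\CC[z_1,\dots,z_n]$ all of whose nonzero elements are stable has dimension at most one. This last fact is not obvious---its proof in the paper passes through the multivariate Hermite--Kakeya--Obreschkoff theorem and a Wronskian positivity argument---and your sketch supplies neither it nor an alternative route to the same conclusion.
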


\begin{remark}\label{def-symb}
Theorem \ref{multi-finite-stab} may also be stated in terms of other
(algebraic) symbols of $T$ that are equivalent to the one defined above,
for instance the polynomial
$$
T[(1-zw)^\kappa]
= \sum_{\alpha \leq \kappa}(-1)^{\alpha}\binom \kappa \alpha 
T(z^\alpha)w^{\alpha}=(-1)^{\kappa}w^{\kappa}G_T(z,-w^{-1}).
$$
Note that $T[(1-zw)^\kappa]\in\HH_{2n}(\CC)\Leftrightarrow 
G_T(z,w)\in \HH_{2n}(\CC)$, see, e.g., Lemma \ref{l-close} (3).
\end{remark}

\begin{definition}\label{d-pp}
Two polynomials $f,g \in \RR[z_1,\ldots,z_n]$ are in 
{\em proper position}, denoted $f \ll g$, if $g+if \in \HH_n(\CC)$.
\end{definition} 

By the Hermite-Biehler theorem (see, e.g., \cite[Theorem 6.3.4]{RS}) 
in the univariate case the relation $f\ll g$ is equivalent to saying that
$f$ and $g$ are real-rooted (or identically zero), their zeros interlace
and their {\em Wronskian} $W[f,g]:=f'g-fg'$ is non-positive on the whole of 
$\RR$. We study further properties of this relation in \S \ref{s-2}.

The algebraic characterization of real stability preservers on 
finite-dimensional polynomial spaces -- which solves 
Problem \ref{prob1} for $\KK=\RR$ and $\Omega=H$ -- is as follows.

\begin{theorem}\label{multi-finite-hyp}
Let $\kappa \in \NN^n$ and 
$T : \RR_\kappa[z_1,\ldots, z_n] \rightarrow \RR[z_1,\ldots, z_n]$ be a 
linear operator. Then $T$ preserves real stability if and only if either
\begin{itemize}
\item[(a)] $T$ has range of dimension no greater than two and is of the form 
$$
T(f) = \alpha(f)P + \beta(f)Q,
$$
where $\alpha, \beta: \RR_\kappa[z_1,\ldots, z_n] \rightarrow \RR$ are 
linear functionals and $P,Q$ are real stable polynomials such that 
$P \ll Q$, or 
\item[(b)] $G_T(z,w)\in \HH_{2n}(\RR)$, or 
\item[(c)] $G_T(z,-w)\in \HH_{2n}(\RR)$. 
\end{itemize}
\end{theorem}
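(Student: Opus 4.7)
My approach is to deduce Theorem \ref{multi-finite-hyp} from its complex counterpart Theorem \ref{multi-finite-stab} via a Hermite--Biehler complexification, reading the three alternatives (a), (b), (c) as the three possible ``orientations'' of $T$ relative to the proper-position relation $\ll$.

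For sufficiency, (a) reduces to the multivariate Hermite--Biehler fact that if $P \ll Q$ in $\HH_n(\RR)$ then every real combination $\al P + \be Q$ lies in $\HH_n(\RR)$, a property to be developed in \S\ref{s-2}. For (b), since $\HH_{2n}(\RR) \subset \HH_{2n}(\CC)$, clause (b) of Theorem \ref{multi-finite-stab} holds for the complexified operator $T_\CC$, so $T_\CC$ preserves $H$-stability; because $T$ has real coefficients, it maps $\HH_n(\RR)$ into itself. For (c), introduce $\si : f(z)\mapsto f(-z)$, which preserves real stability, and compute $G_{T\si}(z,w)= (-1)^{|\kappa|}G_T(z,-w)$; hence (c) for $T$ is (b) for $T\si$, so $T\si$ preserves real stability and $T=(T\si)\si$ then does too.

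For necessity, assume $T$ preserves real stability with image of real dimension at least three. The key observation is that if $f \ll g$ in $\HH_n(\RR)$ then $f+sg \in \HH_n(\RR)$ for all $s\in\RR$, so $T(f)+sT(g)$ is real stable for every such $s$; by the multivariate Hermite--Biehler criterion this forces $T(f),T(g)$ to be in proper position, i.e., $T(f)\ll T(g)$ or $T(g)\ll T(f)$ (modulo linear dependence). A connectedness argument on the cone of $\ll$-pairs upgrades this pointwise dichotomy to a \emph{global} choice of orientation: either $T$ preserves $\ll$ throughout $\HH_n(\RR)$, or it reverses $\ll$ throughout. In the preserving case, each $h=g+if\in\HH_n(\CC)$ satisfies $T_\CC(h)=T(g)+iT(f)\in\HH_n(\CC)$, so $T_\CC$ preserves $H$-stability; the rank hypothesis rules out clause (a) of Theorem \ref{multi-finite-stab}, forcing $G_{T_\CC}\in\HH_{2n}(\CC)$, and since $G_T$ has real coefficients $G_T\in\HH_{2n}(\RR)$, namely (b). The reversing case yields (c) by the same argument applied to $T\si$. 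The residual low-rank case (image dimension $\le 2$) is handled by choosing a basis $\{P,Q\}$ of the image, writing $T(f)=\al(f)P+\be(f)Q$, and applying Hermite--Biehler to the two-parameter family $\{\al(f)P+\be(f)Q : f\in\HH_n(\RR)\}$ to force $P\ll Q$ after relabeling, which is (a).

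The decisive difficulty is the uniform-orientation step: promoting the pointwise dichotomy ``$T(f)\ll T(g)$ or $T(g)\ll T(f)$'' to a single global orientation. In one variable this is a routine continuity argument on real roots, but in several variables the proper-position calculus is considerably more delicate and I expect to need the $\ll$-technology of \S\ref{s-2} together with a path-connectedness argument in the cone of $\ll$-pairs and a careful treatment of the degeneration stratum where $(T(f),T(g))$ becomes linearly dependent, since that is precisely where the orientation could in principle flip between chambers.
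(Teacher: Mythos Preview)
Your sufficiency argument matches the paper's. For necessity, your overall architecture---Hermite--Biehler complexification followed by a global orientation dichotomy, then an appeal to Theorem~\ref{multi-finite-stab}---is the paper's strategy as well. But you correctly identify the uniform-orientation step as the crux, and the route you propose for it (path-connectedness of the abstract cone of $\ll$-pairs, plus analysis of the degeneration stratum) is both vaguer and harder than what the paper does.

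The paper never touches the abstract cone of $\ll$-pairs. It runs the whole argument on the concrete family $W\mapsto (z+W)^\kappa=F_W(z)+iG_W(z)$, $W\in H^n$, so that $G_T(z,W)=T(F_W)+iT(G_W)$. Hermite--Kakeya--Obreschkoff gives $G_T(z,W)\in\HH_n(\CC)\cup\HH_n^{-}(\CC)\cup\{0\}$ for each $W$, and connectedness of $H^n$ is for free. Either $G_T(z,W)\in\HH_n(\CC)$ for all $W$ (yielding (b)), or $G_T(z,W)\in\HH_n^{-}(\CC)$ for all $W$ (yielding (c)), or on a path between two opposite types there is a $W'$ with $G_T(z,W')\in\big(\HH_n(\CC)\cap\HH_n^{-}(\CC)\big)\cup\{0\}=\CC\,\HH_n(\RR)\cup\{0\}$ (Proposition~\ref{intersection}). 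In this last case an $\epsilon$-perturbation \`a la Lemma~\ref{plenty}---for every $h\in\RR_\kappa[z]$ one has $bF_{W'}-aG_{W'}+\epsilon h\in\HH_n(\RR)$ for small $\epsilon$---shows $T(h)\in\HH_n(\RR)\cup\{0\}$ for \emph{every} $h$, whence the range has dimension $\le 2$ by Lemma~\ref{spaces}, giving (a).

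This last point is where your outline has a genuine gap beyond the connectedness issue: even if you land on a single pair $(f,g)$ with $f\ll g$ and $T(f),T(g)$ linearly dependent, nothing in your sketch forces the \emph{entire} range to be low-dimensional. The paper's use of the specific kernels $(z+W)^\kappa$ is precisely what makes the perturbation step available and turns a single degeneration into the global conclusion (a).
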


If $T : \KK[z_1,\ldots, z_n] \rightarrow \KK[z_1,\ldots,z_n]$, where 
$\KK=\RR$ or 
$\CC$, is a linear operator 
we define its {\em transcendental symbol}, $\overline{G}_T(z,w)$, to be the 
formal power series in $w_1, \ldots, w_n$ with polynomial coefficients in 
$\KK[z_1,\ldots,z_n]$ given by 
$$
\overline{G}_T(z,w)
=\sum_{\alpha \in \NN^n} (-1)^\alpha T(z^\alpha)
\frac {w^\alpha}{\alpha!}, 
$$
where  
$\alpha!= \alpha_1! \cdots \alpha_n!$. By abuse of notation we write
$\overline{G}_T(z,w) =T[e^{-z\cdot w}]$, where 
$z\cdot w=z_1w_1+\ldots+z_nw_n$.

Let us define the {\em complex Laguerre-P\'olya class} $\HHH_n(\CC)$ as 
the class of entire functions in $n$ variables that are limits, uniformly on 
compact sets, of polynomials in $\HH_n(\CC)$, see, e.g., \cite[Chap.~IX]{Le}. 
The usual {\em Laguerre-P\'olya class} $\HHH_n(\RR)$ consists of all 
functions in $\HHH_n(\CC)$ with real coefficients.

Our next theorem provides a transcendental characterization of stability
preservers on infinite-dimensional complex polynomial spaces, thus solving 
Problem \ref{prob2} for $\KK=\CC$ and $\Omega=H$.

\begin{theorem}\label{multi-infinite-stab}
Let $T : \CC[z_1,\ldots, z_n] \rightarrow \CC[z_1,\ldots, z_n]$ be a 
linear operator. Then $T$ preserves stability if and only if either
\begin{itemize}
\item[(a)] $T$ has range of dimension at most one and is of the form 
$$
T(f) = \alpha(f)P,
$$
where $\alpha$ is a linear functional on $\CC[z_1,\ldots, z_n]$ and $P$ is a 
stable polynomial, or 
\item[(b)] $\overline{G}_T(z,w)\in \HHH_{2n}(\CC)$. 
\end{itemize}
\end{theorem}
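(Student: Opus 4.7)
The plan is to bootstrap from the finite-dimensional Theorem \ref{multi-finite-stab} by a two-sided approximation: on the one hand, realizing $\overline{G}_T(z,w)=T[e^{-z\cdot w}]$ as a uniform-on-compacts limit of rescaled algebraic symbols $T[\prod_i(1-z_iw_i/N)^N]$; on the other hand, recovering the algebraic symbol $G_{T_\kappa}$ of the restriction $T_\kappa:=T|_{\CC_\kappa[z_1,\ldots,z_n]}$ from $\overline{G}_T$ via a finite Lieb--Sokal-type transform. A useful preliminary observation is that $\prod_i(1-z_iw_i/N)^N\in\HH_{2n}(\CC)$ for every $N>0$: for $(z,w)\in H^{2n}$ each product $z_iw_i$ has argument strictly in $(0,2\pi)$, and so cannot equal the positive real number $N$.

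For necessity, assume $T$ preserves stability and is not of type~(a). For $\kappa=(N,\ldots,N)$ set $p_N(z,w):=T[\prod_i(1-z_iw_i/N)^N]$. Since $T$ acts only in $z$ and $\prod_i(1-z_iw_i/N)^N$ is $z$-stable for every fixed $w\in H^n$, each slice $p_N(\,\cdot\,,w)$ is either identically zero or lies in $\HH_n(\CC)$, and a short Hurwitz-in-$w$ argument promotes this to $p_N\in\HH_{2n}(\CC)\cup\{0\}$. Expanding,
\[
p_N(z,w)=\sum_{\alpha\le\kappa}\binom{\kappa}{\alpha}\frac{(-1)^{|\alpha|}}{N^{|\alpha|}}T(z^\alpha)\,w^\alpha,
\]
the coefficient of $w^\alpha$ converges as $N\to\infty$ to the coefficient $(-1)^{|\alpha|}T(z^\alpha)/\alpha!$ of $\overline{G}_T$. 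To upgrade this to uniform convergence on compacts of $\CC^{2n}$ -- and hence to show that $\overline{G}_T$ actually defines an entire function -- I would invoke the multivariate Sz\'asz inequalities developed in the paper: they provide \emph{a priori} bounds on every coefficient of a stable polynomial in terms of finitely many initial ones, turning $\{p_N\}$ into a normal family. Hurwitz's theorem then yields $\overline{G}_T\in\HHH_{2n}(\CC)$.

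For sufficiency, assume $\overline{G}_T\in\HHH_{2n}(\CC)$ and let $f\in\CC_\kappa[z]$ be stable. Since $T(f)=T_\kappa(f)$, it suffices to show $T_\kappa$ preserves stability on $\CC_\kappa[z]$; by Theorem \ref{multi-finite-stab} this reduces (away from the rank-$\le 1$ alternative, handled separately) to showing $G_{T_\kappa}\in\HH_{2n}(\CC)$, equivalently (Remark \ref{def-symb}) $T[(1-zw)^\kappa]\in\HH_{2n}(\CC)$. Using $T(z^\alpha)=(-1)^{|\alpha|}\alpha!\,[w^\alpha]\overline{G}_T(z,w)$, a direct calculation yields the identification
\[
T[(1-zw)^\kappa]=\overline{G}_T(z,-\partial_v)\,(1-wv)^\kappa\big|_{v=0}.
\]
Since $(1-wv)^\kappa\in\HH_{2n}(\CC)$ in the variables $(v,w)$ and $\overline{G}_T\in\HHH_{2n}(\CC)$, a Lieb--Sokal-style differential-operator lemma (flowing from the polarization machinery used for Theorem \ref{multi-finite-stab}) forces the right-hand side to lie in $\HH_{2n}(\CC)\cup\{0\}$, and the non-degeneracy of $T_\kappa$ outside case~(a) rules out the zero alternative.

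The main technical obstacle is in the necessity direction: showing that the formal series $\overline{G}_T(z,w)$ is actually an entire function and that the approximants $p_N$ converge uniformly on compact subsets of $\CC^{2n}$. This is exactly where the multivariate extension of Sz\'asz' inequalities enters, and deriving those uniform coefficient bounds for multivariate stable polynomials is the key technical novelty flagged in the introduction. By contrast, the sufficiency direction is more routine once the finite-dimensional characterization and the Lieb--Sokal-style differential-operator lemma are available.
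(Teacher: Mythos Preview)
Your overall plan is the paper's: reduce Theorem~\ref{multi-infinite-stab} to Theorem~\ref{multi-finite-stab} via an equivalence between $\overline{G}_T\in\HHH_{2n}(\CC)$ and stability of all the finite symbols $G_{T_\kappa}$ (this is Theorem~\ref{t-trans}). In the necessity direction your approximants $p_N$ are literally the paper's $F_N$ (take $\beta_N=(N,\dots,N)$ and note $\binom{\kappa}{\alpha}N^{-|\alpha|}=J(\alpha,\beta_N)/\alpha!$), and you correctly identify the multivariate Sz\'asz inequalities as the key device that turns $\{p_N\}$ into a normal family. One genuine gap, though: the ``slice plus Hurwitz-in-$w$'' argument does \emph{not} yield $p_N\in\HH_{2n}(\CC)\cup\{0\}$. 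Knowing that each slice $p_N(\,\cdot\,,w)$ is stable or zero allows the possibility that $p_N(\,\cdot\,,w_0)\equiv 0$ for some $w_0\in H^n$ while $p_N\not\equiv 0$; then $p_N$ vanishes at every point of $H^n\times\{w_0\}\subset H^{2n}$ and is neither stable nor identically zero. The fix is to invoke Theorem~\ref{multi-finite-stab} directly: $p_N(z,w)=T[(1-zw)^\kappa]\big|_{w\mapsto w/N}$ with $\kappa=(N,\dots,N)$, and by Remark~\ref{def-symb} and Theorem~\ref{multi-finite-stab} this is stable as soon as $T_\kappa$ is not of type~(a), i.e.\ for all sufficiently large $N$ under your standing hypothesis on $T$.

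In the sufficiency direction your route differs from the paper's. The paper applies the generalized Jensen multiplier $\bar\Lambda:z^\gamma w^\alpha\mapsto(\kappa)_\alpha z^\gamma w^\alpha$ (shown stability-preserving in Lemma~\ref{multi}) to stable polynomial approximants of $\overline{G}_T$; since $\bar\Lambda$ kills all but finitely many $w$-monomials, $\bar\Lambda(F_m)\to\bar\Lambda(\overline{G}_T)=T[(1-zw)^\kappa]$ uniformly on compacts, and Hurwitz gives the conclusion. Your differential-operator identity $T[(1-zw)^\kappa]=\overline{G}_T(z,-\partial_v)(1-wv)^\kappa\big|_{v=0}$ is correct and leads to a valid alternative, but the ``Lieb--Sokal-style lemma'' you invoke is not an off-the-shelf statement here: since $\overline{G}_T$ is a power series, you must again approximate by stable polynomials $Q_m$, argue that $Q_m(z,-\partial_v)$ preserves stability (which needs polarization in both the $v$- and $\zeta$-variables before Lemma~\ref{Li-So} applies), and pass to the limit. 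This works, but the Jensen-multiplier route is shorter and entirely self-contained within Theorem~\ref{multi-finite-stab}. Finally, your ``handled separately'' and ``non-degeneracy rules out zero'' remarks are unnecessary: if $T[(1-zw)^\kappa]\equiv 0$ then $T_\kappa\equiv 0$, which trivially preserves stability.
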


\begin{remark}\label{lhp}
From Theorem~\ref{multi-infinite-stab} one can easily deduce a 
characterization of linear operators preserving $\Omega$-stability for any 
open half-plane $\Omega$. For instance, the analog of 
Theorem~\ref{multi-infinite-stab} (b) for the open right half-plane 
$\bH_{\frac{\pi}{2}}$ is that the {\em transcendental symbol of} $T$ {\em 
with respect to} $\bH_{\frac{\pi}{2}}$, i.e., the formal power series 
$$
T[e^{z\cdot w}]:=\sum_{\alpha \in \NN^n}T(z^\alpha)\frac {w^\alpha}{\alpha!}, 
$$
defines an entire function which is the limit, uniformly on compact sets, of 
weakly Hurwitz stable polynomials. 
\end{remark}

The analog of Theorem \ref{multi-infinite-stab} for real stability presevers
-- which solves 
Problem \ref{prob2} for $\KK=\RR$ and $\Omega=H$ -- is as follows.

\begin{theorem}\label{multi-infinite-hyp}
Let $T : \RR[z_1,\ldots, z_n] \rightarrow \RR[z_1,\ldots, z_n]$ be a linear 
operator. Then $T$ preserves real stability if and only if either
\begin{itemize}
\item[(a)] $T$ has range of dimension no greater than two and is of the form 
$$
T(f) = \alpha(f)P + \beta(f)Q,
$$
where $\alpha, \beta: \RR[z_1,\ldots, z_n] \rightarrow \RR$ are linear 
functionals and $P,Q$ are real stable polynomials such that $P \ll Q$, or 
\item[(b)] $\overline{G}_T(z,w)\in \HHH_{2n}(\RR)$, or 
\item[(c)] $\overline{G}_T(z,-w)\in \HHH_{2n}(\RR)$. 
\end{itemize}
\end{theorem}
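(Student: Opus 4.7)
The plan is to deduce sufficiency from Theorem~\ref{multi-infinite-stab} (the complex case) together with a reflection symmetry specific to the real setting, and to deduce necessity by relating the transcendental symbol $\overline{G}_T$ to the algebraic symbols of the restrictions $T_{\kappa_N}:=T|_{\RR_{\kappa_N}[z_1,\ldots,z_n]}$ with $\kappa_N=(N,\ldots,N)$, via a Möbius substitution and a limiting procedure.

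\medskip

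For sufficiency, case (a) follows because $P\ll Q$ means $Q+iP\in\HH_n(\CC)$; rotating $e^{i\phi}(Q+iP)$ preserves stability, and the imaginary part of a stable polynomial with real decomposition is itself real stable or zero, which exhibits every real combination $aP+bQ$ as real stable or zero. Case (b) uses $\HHH_{2n}(\RR)\subseteq\HHH_{2n}(\CC)$: Theorem~\ref{multi-infinite-stab} applied to the $\CC$-linear extension of $T$ shows that $T$ preserves complex stability, and since $T$ sends real polynomials to real polynomials this upgrades to preservation of real stability. For case (c), introduce $U$ by $U(z^\alpha):=(-1)^{|\alpha|}T(z^\alpha)$, so that $U(f)(z)=T(f(-z))$ and a direct calculation gives $\overline{G}_U(z,w)=\overline{G}_T(z,-w)\in\HHH_{2n}(\RR)$; case (b) applied to $U$ shows $U$ preserves real stability. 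The involution $S\colon f(z)\mapsto f(-z)$ preserves real stability on $\RR[z]$ because real coefficients make the zero locus of a real stable $f$ invariant under $z\mapsto\bar z$, hence it avoids $(-\HH)^n$ in addition to $\HH^n$, and negating the variables sends $(-\HH)^n$ back to $\HH^n$. Writing $T=U\circ S$ closes the case.

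\medskip

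For necessity, assume $T$ preserves real stability. If $\dim T(\RR[z])\le 2$, conclusion (a) follows by recognising that a two-dimensional subspace of $\RR[z_1,\ldots,z_n]$ consisting of real stable polynomials is spanned by a proper-position pair, which amounts to the stability of the single combination $Q+iP$. Otherwise, choose $N$ large enough that $T_{\kappa_N}$ already has range of dimension greater than two, so Theorem~\ref{multi-finite-hyp} excludes its case~(a) and forces either $G_{T_{\kappa_N}}(z,w)\in\HH_{2n}(\RR)$ or $G_{T_{\kappa_N}}(z,-w)\in\HH_{2n}(\RR)$. Set
$$
P_N(z,w):=T\!\left[\prod_{i=1}^n\left(1-\frac{z_iw_i}{N}\right)^{\!N}\right].
$$
A direct substitution yields the identity $G_{T_{\kappa_N}}(z,-N/w)=(-N)^{nN}w^{-\kappa_N}P_N(z,w)$. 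Since $w_i\mapsto -N/w_i$ is an automorphism of the upper half-plane and the factor $w^{\kappa_N}$ is non-vanishing on $\HH^n$, this identity translates the two alternatives of Theorem~\ref{multi-finite-hyp} into $P_N\in\HH_{2n}(\RR)$ and $P_N(z,-w)\in\HH_{2n}(\RR)$, respectively. A pigeonhole argument picks out an infinite subsequence of $N$'s falling in the same case. The asymptotics $\binom{N}{k}N^{-k}\to 1/k!$ show that $P_N\to\overline{G}_T$ uniformly on compact subsets of $\CC^{2n}$, and passing to the limit along the subsequence places either $\overline{G}_T$ or $\overline{G}_T(z,-w)$ in $\HHH_{2n}(\RR)$, giving (b) or (c).

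\medskip

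The main obstacle is establishing the uniform-on-compacts convergence $P_N\to\overline{G}_T$ without a priori local bounds on the family: this is precisely where the multivariate Sz\'asz-type inequalities announced in the introduction enter, bounding all coefficients of a stable polynomial by its first few non-zero ones and supplying the normal-family compactness needed to identify the limit inside $\HHH_{2n}(\RR)$. The other delicate point is the two-dimensional range case of (a), which relies on the characterization of two-dimensional subspaces of real stable polynomials as spans of proper-position pairs.
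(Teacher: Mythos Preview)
Your approach is essentially the paper's: both reduce to the finite-degree Theorem~\ref{multi-finite-hyp}, identify the algebraic symbols $G_{T_{\kappa_N}}$ with rescalings of $T[(1-zw)^{\kappa_N}]$ (your $P_N$ is exactly the paper's $F_m$ from \S\ref{ss-83} with $m=N$), and invoke the multivariate Sz\'asz estimates of \S\ref{ss-81} to obtain the normal-family compactness needed to identify the limit with $\overline{G}_T$. Your use of Theorem~\ref{multi-infinite-stab} for sufficiency in (b) is a clean shortcut; the paper instead routes both (b) and its converse through the standalone Theorem~\ref{t-trans}.

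There is, however, one imprecision in your necessity argument. Your dichotomy is ``$\dim T(\RR[z])\le 2$'' versus ``$\dim T(\RR[z])>2$'', and in the first branch you assert that the range ``consists of real stable polynomials''. That is exactly what needs proof: a two-dimensional range spanned by two real stable polynomials need not be spanned by a proper-position pair (think of $\mathrm{span}(1,z^2)$ in one variable). The correct split is ``(a) holds'' versus ``(a) fails''. If (a) fails then for all sufficiently large $N$ the restriction $T_{\kappa_N}$ cannot satisfy alternative (a) of Theorem~\ref{multi-finite-hyp} either---because either its range already exceeds dimension two, or its range equals that of $T$ and hence is not of the $P\ll Q$ form---so (b) or (c) of Theorem~\ref{multi-finite-hyp} is forced and your pigeonhole-plus-limit argument runs unchanged. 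The paper handles this implicitly via the proof (not merely the statement) of Theorem~\ref{multi-finite-hyp} in \S\ref{s-7}, which shows that whenever (b) and (c) both fail for $T_\kappa$ the \emph{entire} range of $T_\kappa$ consists of real stable polynomials; this is the content you flag at the end as ``the other delicate point''.
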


\subsection{Fundamental Properties of Stable Polynomials}\label{s-2}

In this section we first review some basic facts and then prove several 
results on (real) stable polynomials needed later on.
We start with an immediate consequence of the definitions 
(cf.~\cite[Lemma 2.1]{BBS3}).

\begin{lemma}\label{l-obv}
Let $f\in\KK[z_1,\ldots,z_n]$, where $\KK=\RR$ or $\CC$. Then $f\in\HH_n(\KK)$
if and only if $f(\lambda t+\alpha)\in\HH_1(\KK)$ for all $\lambda\in\RR_+^n$
and $\alpha\in\RR^n$.
\end{lemma}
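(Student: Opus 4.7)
The lemma is essentially a definitional statement; the plan is to verify each direction directly from the condition that $f(z)\neq 0$ on $H^n$, using the observation that the substitution $t\mapsto \lambda t+\alpha$ with $\lambda\in\RR_+^n$ and $\alpha\in\RR^n$ is precisely tailored so that $t\in H$ if and only if $\lambda t+\alpha\in H^n$.

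For the forward implication, I would fix $\lambda\in\RR_+^n$ and $\alpha\in\RR^n$, set $g(t):=f(\lambda t+\alpha)$, and note that for $t\in H$ the identity $\Im(\lambda_i t+\alpha_i)=\lambda_i\Im(t)>0$ places $\lambda t+\alpha$ inside $H^n$. Stability of $f$ then yields $g(t)\neq 0$ on $H$, and in particular $g\not\equiv 0$, so $g\in\HH_1(\KK)$. When $\KK=\RR$ the coefficients of $g$ remain real since $\lambda,\alpha$ are real, so the argument covers $\HH_n(\CC)$ and $\HH_n(\RR)$ uniformly.

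For the reverse implication, I would argue by contrapositive. Suppose $f\notin\HH_n(\KK)$, so $f(z^0)=0$ for some $z^0\in H^n$. Setting $\lambda_i:=\Im(z_i^0)>0$ and $\alpha_i:=\Re(z_i^0)$ gives $\lambda\cdot i+\alpha=z^0$, so the univariate specialization $g(t)=f(\lambda t+\alpha)$ satisfies $g(i)=0$ with $i\in H$. This contradicts the assumption that $g\in\HH_1(\KK)$ and completes the argument.

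There is no genuine obstacle here: the geometric content is that the maps $t\mapsto \lambda t+\alpha$ with $\lambda\in\RR_+^n$ and $\alpha\in\RR^n$ sweep out exactly those complex affine lines in $\CC^n$ whose intersection with $H^n$ corresponds to the open upper half-plane in the parameter $t$, and any $z^0\in H^n$ lies on such a line (take $t=i$). The only minor bookkeeping point is the convention that stability excludes the zero polynomial; this causes no trouble because non-vanishing on the open set $H$ automatically forces $g\not\equiv 0$.
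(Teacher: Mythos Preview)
Your proof is correct and is precisely the direct verification the paper has in mind when it calls the lemma ``an immediate consequence of the definitions'' without supplying further details. The only thing to add is that the zero polynomial case in the contrapositive is implicitly handled, since $f\equiv 0$ certainly vanishes at some $z^0\in H^n$ and your construction then applies verbatim.
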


The following theorem is a multivariate version of Hurwitz' theorem on the 
``continuity of zeros'', see, e.g., \cite[Footnote 3, p.~96]{COSW} for a
proof.

\begin{theorem}[Hurwitz' theorem]\label{mult-hur}
Let $D$ be a domain (open connected set) in $\CC^n$ and suppose 
$\{f_k\}_{k=1}^\infty$ is a sequence of non-vanishing 
analytic functions on $D$ that converge to $f$ uniformly on compact 
subsets of $D$. Then $f$ is either 
non-vanishing on $D$ or else identically zero. 
\end{theorem}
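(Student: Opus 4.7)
The plan is to reduce the multivariate statement to the classical one-variable Hurwitz theorem by restricting the $f_k$ to complex lines. Assume $f\not\equiv 0$ on $D$ and suppose, toward a contradiction, that $f(a)=0$ for some $a\in D$. The goal is to produce a complex line through $a$ on which the restriction of $f$ is non-trivial, and then derive a one-variable contradiction on that line.

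First I would produce a point $b\in D$, as close to $a$ as desired, with $f(b)\neq 0$ and with the closed complex segment $\{a+\zeta(b-a):\zeta\in[0,1]\}$ contained in $D$. Openness of $D$ handles the segment condition, while the existence of such $b$ arbitrarily near $a$ follows from the identity principle for holomorphic functions of several variables: if $f$ vanished on some open neighborhood of $a$, then since $D$ is connected $f$ would vanish on all of $D$, contradicting $f\not\equiv 0$. (The several-variable identity principle itself is a one-line consequence of the one-variable version applied along coordinate lines.) This is the step where the hypothesis that $D$ is a \emph{domain}, rather than just an open set, is used in an essential way.

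Next I would perform the one-dimensional reduction. Consider the affine line $\ell(\zeta)=a+\zeta(b-a)$, $\zeta\in\CC$. The preimage $\ell^{-1}(D)\subset\CC$ is open and contains the real interval $[0,1]$; let $V$ be its connected component containing $[0,1]$. Define $g_k(\zeta)=f_k(\ell(\zeta))$ and $g(\zeta)=f(\ell(\zeta))$ on $V$. Each $g_k$ is holomorphic and non-vanishing on $V$, and $g_k\to g$ uniformly on compact subsets of $V$ because any compact $K\subset V$ is mapped by the continuous map $\ell$ to a compact subset of $D$. Moreover $g(0)=f(a)=0$ while $g(1)=f(b)\neq 0$, so $g\not\equiv 0$ on $V$.

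Finally I would invoke (or reprove on the spot) the classical one-variable Hurwitz theorem to finish: since $g\not\equiv 0$ and $g(0)=0$, the zero of $g$ at the origin is isolated, so there is $r>0$ with $\overline{\{|\zeta|\le r\}}\subset V$ and $\epsilon:=\min_{|\zeta|=r}|g(\zeta)|>0$. Uniform convergence on $|\zeta|=r$ gives $|g_k-g|<\epsilon$ for all sufficiently large $k$, and Rouché's theorem then forces $g_k$ to have the same positive number of zeros inside $|\zeta|<r$ as $g$, contradicting the non-vanishing of $g_k$. The main (and only real) obstacle in the entire argument is the first step: choosing a line transverse to the zero set of $f$ at $a$; once this is done the remainder is a routine application of Rouché's theorem.
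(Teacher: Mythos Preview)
Your argument is correct and is the standard reduction of the multivariate Hurwitz theorem to the classical one-variable version via restriction to a complex line, followed by Rouch\'e's theorem. The paper itself does not give a proof of Theorem~\ref{mult-hur}: it simply states the result and refers the reader to \cite[Footnote~3, p.~96]{COSW}. The proof sketched there is essentially the same as yours---choose a line through the putative zero on which the limit function is not identically zero (possible by the several-variable identity theorem and connectedness of $D$), then apply the one-variable Hurwitz/Rouch\'e argument on that line. So your proposal matches both in correctness and in approach what the paper invokes by reference.
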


We next list some of the closure properties for (real) stable polynomials. 
The first property in Lemma \ref{l-close} below is deduced by applying 
Theorem~\ref{mult-hur} with
$D= \{z \in \CC : \Im(z)>0\}^n$ and $f_k(z_1,\ldots,z_n)
= f(z_1, \ldots, z_{i-1}, \mu+z_i/k, z_{i+1}, \ldots, z_n)$, $k\in\NN$, while
the remaining properties are easy consequences of the definitions and 
Lemma~\ref{l-obv}.

\begin{lemma}\label{l-close}
Let $\KK=\RR$ or $\CC$ and $f\in\HH_n(\KK)$ be of degree  $d_j$ in $z_j$, 
$1\le j\le n$. Then for any $1\le i\le n$ one has:
\begin{enumerate} 
\item $f(z_1,\ldots,z_{i-1},\mu,z_{i+1},\ldots,z_n) 
\in \HH_{n-1}(\KK)\cup\{0\}$ for $\mu\in \RR$;
\item $f(z_1,\ldots,z_{i-1},\lambda z_i,z_{i+1},\ldots,z_n) 
\in \HH_{n}(\KK)$ for $\lambda>0$;
\item $z_i^{d_i}f(z_1,\ldots,z_{i-1},-z_i^{-1},z_{i+1},\ldots,z_n)\in 
\HH_{n}(\KK)$;
\item $f(z_1,\ldots,z_{i-1},z_j, z_{i+1},\ldots,z_n)\in \HH_{n-1}(\KK)$ for 
$1\le j\neq i\le n$.
\end{enumerate} 
\end{lemma}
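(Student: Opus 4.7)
The common strategy across parts (1)--(4) is to check that the operation indicated produces a polynomial (which is itself nontrivial only in part (3)) that does not vanish on $H^n$, or on $H^{n-1}$ in part (1). Parts (2)--(4) will follow directly from the observation that the relevant substitutions send $H^n$ into itself; part (1), where the substituted value is real and hence sits on the boundary of $H$, instead requires Hurwitz' theorem.

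For part (1), I will take the hint and set $f_k(z) := f(z_1,\ldots,z_{i-1},\mu+z_i/k,z_{i+1},\ldots,z_n)$ on $D := H^n$. Since $\Im(\mu+z_i/k) = \Im(z_i)/k > 0$ whenever $z_i \in H$, each $f_k$ is non-vanishing on $D$, and on compact subsets of $\CC^n$ the sequence converges uniformly to $g(z) := f(z_1,\ldots,z_{i-1},\mu,z_{i+1},\ldots,z_n)$. Theorem \ref{mult-hur} then forces $g$ to be either non-vanishing on $D$ or identically zero. In the former case, since $g$ is independent of $z_i$, non-vanishing on $H^n$ is equivalent to non-vanishing on $H^{n-1}$, whence $g \in \HH_{n-1}(\KK)$; real coefficients of $f$ clearly pass to real coefficients of $g$.

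Parts (2)--(4) are direct from the definition (alternatively from Lemma \ref{l-obv} by reducing univariate affine specializations of the substituted polynomial to univariate affine specializations of $f$). For (2), $\lambda > 0$ gives $\Im(\lambda z_i) = \lambda \Im(z_i) > 0$, so the substitution maps $H^n$ into $H^n$. For (4), if all variables other than $z_i$ lie in $H$, then in particular $z_j \in H$, and placing $z_j$ into the $z_i$-slot keeps the point in $H^n$. For (3), the key point is the Möbius identity $\Im(-z^{-1}) = \Im(z)/|z|^2$, which shows that $z \mapsto -z^{-1}$ preserves $H$; therefore $f(z_1,\ldots,-z_i^{-1},\ldots,z_n)$ is non-vanishing on $H^n$, and the hypothesis $\deg_{z_i}(f) \le d_i$ ensures that multiplication by $z_i^{d_i}$ clears all denominators and produces an honest polynomial, while the extra factor $z_i^{d_i}$ cannot introduce zeros on $H^n$. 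In each case reality of coefficients is preserved, so the real-stable case follows from the complex one. The only step that is not essentially a one-line observation is the use of Hurwitz in part (1), which is also the only real obstacle.
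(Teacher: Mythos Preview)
Your argument is correct and matches the paper's own proof: part (1) via Hurwitz' theorem applied to the sequence $f_k(z)=f(z_1,\ldots,z_{i-1},\mu+z_i/k,z_{i+1},\ldots,z_n)$ on $D=H^n$, and parts (2)--(4) as direct consequences of the definitions (the paper also cites Lemma~\ref{l-obv}, which you mention as an alternative). One tiny quibble: the hypothesis is $\deg_{z_i}(f)=d_i$, not $\le d_i$, but your argument for (3) only uses the latter and so is unaffected.
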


We will now establish a series of results involving the notion of proper
position introduced in Definition~\ref{d-pp}, compare with 
\cite[\S 2]{BBS3}, \cite[Theorem~3.2]{COSW}. In particular, the next lemma 
provides a multivariate analog of the classical Hermite-Biehler 
theorem showing that proper position is a natural higher dimensional 
counterpart of interlacing. 

\begin{lemma}\label{krein}
Let $ f,g \in \RR[z_1,\ldots, z_n]\setminus \{0\}$, set $h = f+ig$ and 
suppose that $f$ and $g$ are not constant multiples of each other. 
The following  are equivalent: 
\begin{enumerate}
\item $h\in\HH_n(\CC)$, that is, $g\ll f$;
\item $|h(z)|>|h(\bar{z})|$ for all $z=(z_1,\ldots,z_n)\in\CC^n$ with 
$\Im(z_j)>0$, $1\le j\le n$, where $\bar{z}=(\bar{z}_1,\ldots,\bar{z}_n)$; 
\item $f+z_{n+1}g\in\HH_{n+1}(\RR)$;
\item $f,g\in\HH_n(\RR)$ and 
$$
\text{{\em Im}}\!\left(\frac{f(z)}{g(z)}\right)\ge 0
$$
whenever $z=(z_1,\ldots,z_n)\in\CC^n$ with $\Im(z_j)>0$, $1\le j\le n$;
\item $g(\lambda t+\alpha)\ll f(\lambda t+\alpha)$ for all $\lambda\in\RR_+^n$
and $\alpha\in\RR^n$.
\end{enumerate}
\end{lemma}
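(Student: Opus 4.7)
The plan is to establish the cycle $(3) \Rightarrow (1) \Rightarrow (5) \Rightarrow (3)$ and to derive $(3) \Leftrightarrow (4)$ and $(1) \Leftrightarrow (2)$ separately. The key tools are Lemma~\ref{l-obv} (reduction of multivariate stability to univariate via real-parametrized lines), the closure properties in Lemma~\ref{l-close}, Theorem~\ref{mult-hur} (Hurwitz), the open mapping theorem in several complex variables, and the classical univariate Hermite-Biehler theorem cited in the excerpt.

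The implications $(3) \Rightarrow (1)$ and $(1) \Rightarrow (5)$ reduce to simple substitutions. For the first, since $i \in H$, evaluating the stable polynomial $f(z) + z_{n+1}g(z)$ at $z_{n+1} = i$ gives $h$, which is then nonvanishing on $H^n$ and not identically zero (as $f$ and $g$ are both nonzero under the not-constant-multiple hypothesis). For the second, Lemma~\ref{l-obv} applied to $h$ yields $\tilde h(t) := h(\lambda t + \alpha) = \tilde f(t) + i\tilde g(t) \in \HH_1(\CC)$ for every $\lambda \in \RR_+^n$ and $\alpha \in \RR^n$, where $\tilde f(t) := f(\lambda t + \alpha)$ and $\tilde g(t) := g(\lambda t + \alpha)$; this is precisely $\tilde g \ll \tilde f$.

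The principal step is $(5) \Rightarrow (3)$. Applying Lemma~\ref{l-obv} to $P(z,w) := f(z) + wg(z)$ in $n+1$ variables reduces the task to showing that $\tilde f(t) + (\mu t + \beta)\tilde g(t) \in \HH_1(\RR)$ for every $\mu > 0$ and $\beta \in \RR$. Univariate Hermite-Biehler, applied to $\tilde g \ll \tilde f$, yields that $\tilde g$ is real-rooted (hence nonvanishing on $H$) and that $-\tilde f/\tilde g$ maps $H$ into $\CC \setminus H$. Since $\mu t + \beta \in H$ whenever $t \in H$, the equation $\tilde f(t) + (\mu t + \beta)\tilde g(t) = 0$ has no solution in $H$, and this restriction is not identically zero (else $h$ would vanish at $\lambda i + \alpha \in H^n$). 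For $(3) \Rightarrow (4)$, substituting $z_{n+1} = i\epsilon$ in $f + z_{n+1}g$ and letting $\epsilon \to 0^+$, Theorem~\ref{mult-hur} yields $f \in \HH_n(\RR)$; rescaling by $1/\epsilon$ and letting $\epsilon \to \infty$ gives $g \in \HH_n(\RR)$. Stability of $f + wg$ on $H^{n+1}$ together with the nonvanishing of $g$ on $H^n$ then excludes $-f(z)/g(z) \in H$, i.e., forces $\Im(f/g) \ge 0$. The reverse $(4) \Rightarrow (3)$ is immediate.

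Finally, for $(1) \Leftrightarrow (2)$, a direct expansion yields
$$
|h(z)|^2 - |h(\bar z)|^2 \;=\; 4\,\Im\bigl(f(z)\overline{g(z)}\bigr) \;=\; 4|g(z)|^2\,\Im\!\bigl(f(z)/g(z)\bigr)
$$
(the last equality where $g(z) \neq 0$). Assuming (1), hence (4), $g$ is nonvanishing on $H^n$ and $\Im(f/g) \ge 0$ there; since $f$ and $g$ are not constant multiples of each other, $f/g$ is a non-constant holomorphic function on $H^n$, and the open mapping theorem upgrades the inequality to strict positivity, yielding (2). The converse $(2) \Rightarrow (1)$ is trivial since $|h(z)| > |h(\bar z)| \ge 0$ forces $h(z) \neq 0$ on $H^n$. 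The main obstacle will be $(5) \Rightarrow (3)$: this is where univariate Hermite-Biehler enters essentially, and one must interleave the line parameter $\lambda t + \alpha$ inside $\tilde f, \tilde g$ with the affine parameter $\mu t + \beta$ multiplying $\tilde g$.
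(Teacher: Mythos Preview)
Your proof is correct, and the logical skeleton $(3)\Rightarrow(1)\Rightarrow(5)\Rightarrow(3)$ together with $(3)\Leftrightarrow(4)$ and $(1)\Leftrightarrow(2)$ is sound. It differs from the paper's argument in two places. For $(1)\Rightarrow(3)$ the paper works directly: fixing $w=\alpha+i\beta\in H^n$, it factors the univariate stable polynomial $p(t)=h(\alpha+t\beta)=C\prod_j(t-\zeta_j)$ with $\Im(\zeta_j)\le 0$, uses $|i-\zeta_j|\ge |-i-\zeta_j|$ to get $|h(w)|\ge|h(\bar w)|$, and from this reads off $\Im(f(w)/g(w))\ge 0$, hence stability of $z_{n+1}\mapsto f(w)+z_{n+1}g(w)$. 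You instead reduce via Lemma~\ref{l-obv} and invoke the classical univariate Hermite--Biehler theorem as a black box to obtain $\Im(\tilde f/\tilde g)\ge 0$ on $H$; this is legitimate (the paper itself cites univariate HB) but less self-contained. For the strict inequality in $(2)$, the paper argues that equality forces all $\zeta_j$ real, hence $\tilde f,\tilde g$ proportional along the line, and then uses $(3)$ with $z_{n+1}=a\in\RR$ plus stability to globalize to $f+ag\equiv 0$, contradicting linear independence. Your open-mapping argument is cleaner, and in fact the open mapping property for a non-constant holomorphic $f/g:H^n\to\CC$ follows from the multivariate Hurwitz theorem already at hand.

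One small quibble: in $(5)\Rightarrow(3)$, the parenthetical ``else $h$ would vanish at $\lambda i+\alpha$'' is not the right justification---identical vanishing of $\tilde f(t)+(\mu t+\beta)\tilde g(t)$ gives $f(z^*)+(\mu i+\beta)g(z^*)=0$ at $z^*=\lambda i+\alpha$, not $h(z^*)=0$. But this is moot: when $\tilde g\not\equiv 0$ you have already shown the restriction has no zero in $H$, so it is nonzero at $t=i$; and when $\tilde g\equiv 0$ the restriction equals $\tilde f$, which is nonzero since $(5)$ forces $\tilde h\in\HH_1(\CC)$.
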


\begin{proof} 
The equivalences (1) $\Leftrightarrow$ (5) and (3) $\Leftrightarrow$ (4) are 
immediate from the definitions and 
Lemma~\ref{l-obv}. It is also clear that 
(3) $\Rightarrow$ (1) and (2) $\Rightarrow$ (1). We proceed to 
prove the remaining implications, namely 
(1) $\Rightarrow$ (3) and (1) $\Rightarrow$ (2). 

Suppose that (1) holds, i.e., $h\in\HH_n(\CC)$, and fix $w=\alpha + i\beta \in 
\{\zeta \in \CC: \Im(\zeta)>0\}^n$ with $\alpha,\beta\in\RR^n$. The polynomial 
$p(t)= h(\alpha+t\beta)$ is stable so we may write it as 
$$
p(t)= C\prod_{j=1}^d(t-\zeta_j), \quad  
$$ 
where $C \in \CC$ and $\Im(\zeta_j) \leq 0$, $1\le j\le d$. 
Note that for all $j$ one has $|i-\zeta_j| \ge |-i-\zeta_j|$ with equality 
only if $\zeta_j \in \RR$. Hence 
$$
|f(w)+ig(w)| = |h(w)| \geq |h(\bar{w})|= \left|\overline{h(\bar{w})}\right|
=|f(w)-ig(w)|.  
$$
To prove (3) we need to show that the univariate polynomial 
$q(z_{n+1}) = f(w)+z_{n+1}g(w)$ is stable. If $g(w)=0$ 
then $q(z_{n+1})$ is a non-zero constant so it is stable. Therefore we 
may assume that $g(w) \neq 0$. By the above we have 
$
|f(w)/g(w)+i| \geq |f(w)/g(w)-i|, 
$
which implies that $\Im(f(w)/g(w)) \geq 0$. Now if $q(z_{n+1})=0$ then 
$f(w)/g(w) + z_{n+1} =0$
and thus $\Im(z_{n+1})\leq 0$. It follows that $q\in\HH_1(\CC)$ and since
$w\in\{\zeta \in \CC: \Im(\zeta)>0\}^n$ was arbitrarily fixed we deduce that
$f+z_{n+1}g\in\HH_{n+1}(\CC)$ hence $f+z_{n+1}g\in\HH_{n+1}(\RR)$, 
which confirms (3). 

Finally, to show that (1) $\Rightarrow$ (2) note that by the above it is actually 
enough to prove that (3) $\Rightarrow$ (2). Now if (3) holds then letting $z_{n+1}=i$ we get in particular that 
$h\in\HH_n(\CC)$ and therefore $|f(w)+ig(w)|\ge |f(w)-ig(w)|$ for all 
$w\in\{\zeta \in \CC: \Im(\zeta)>0\}^n$. It remains to show that we cannot 
have $|f(w)+ig(w)| =|f(w)-ig(w)|$ for such $w$. Supposing the contrary it 
follows from the above arguments that for some $\alpha\in\RR^n$ and 
$\beta\in\RR_+^n$ all the zeros $\zeta_j$, $1\le j\le d$, of
the polynomial $p(t)=f(\alpha+t\beta) + i g(\alpha+t\beta)$ are real. Since
$f(\alpha+t\beta)$ and $g(\alpha+t\beta)$ have real coefficients this can 
occur only if these polynomials are constant multiples of each other, say 
$f(\alpha+t\beta) + a g(\alpha+t\beta)=0$ for some $a\in\RR$. By setting the 
variable $z_{n+1}=a$ we see that $f(z) + a g(z)$ is either a stable 
polynomial or identically zero. Since it vanishes for $z=\alpha+i\beta$ it 
must be identically zero, which contradicts the assumption that $f$ and $g$ 
are not constant multiples of each other. We conclude that 
(3) $\Rightarrow$ (2), which 
completes the proof of the lemma.
\end{proof}

\begin{remark}\label{r-krein}
The equivalence (3) $\Leftrightarrow$ (4) in Lemma \ref{krein} extends in
obvious fashion to $\bH_\theta$-stable polynomials with complex coefficients: 
if 
$f,g\in\CC[z_1,\ldots,z_n]\setminus \{0\}$ then $f+z_{n+1}g$ is 
$\bH_\theta$-stable (in $n+1$ variables) if and only if
$$
\Im\!\left(\frac{e^{i\theta}f(z)}{g(z)}\right)\ge 0,
\quad z=(z_1,\ldots,z_n)\in\bH_\theta^n.
$$
Note in particular that for the open right-half plane 
$\bH_{\frac{\pi}{2}}$ the above relation reads
$$
\Re\!\left(\frac{f(z)}{g(z)}\right)\ge 0,
\quad \Re(z_j)>0,\,1\le j\le n.
$$
\end{remark}

\begin{definition}\label{j-W}
For $g,f \in \RR[z_1,\ldots, z_n]$ and $1 \leq j \leq n$ define the $j$-th 
{\em Wronskian} of $g,f$  as 
$$
W_j[g,f] 
=\frac{\partial g}{\partial z_j}\cdot f-g\cdot\frac{\partial f}{\partial z_j}.
$$
\end{definition}

The following multivariate analog of the Hermite-Kakeya-Obreschkoff 
theorem (see, e.g., \cite[Theorem 6.3.8]{RS} for the classical univariate 
version) was first proved in 
\cite{BBS3}. Below we give an alternative shorter proof. 

\begin{theorem}\label{th-HKO}
Let $f,g \in \RR[z_1,\ldots, z_n]$. All non-zero polynomials in the space 
$$
\{\alpha f + \beta g : \alpha, \beta \in \RR\}
$$
are stable if and only if either $f=g\equiv 0$, $f \ll g$ or $g\ll f$. 
Moreover, if $g \ll f$ then  $W_j[g,f](x) \leq 0$  for all $x \in \RR^n$ and 
$1\leq j \leq n$, and if 
$f \ll g$ then  $W_j[g,f](x) \geq 0$  for all $x \in \RR^n$ and 
$1\leq j \leq n$. 
\end{theorem}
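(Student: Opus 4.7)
The plan is to reduce the theorem to the Hermite--Biehler criterion of Lemma \ref{krein} together with the univariate case.

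\emph{Sufficiency.} Suppose without loss of generality that $g \ll f$ (the case $f \equiv g \equiv 0$ is vacuous). By the equivalence (1)$\,\Leftrightarrow\,$(3) of Lemma \ref{krein}, the polynomial $f(z) + z_{n+1} g(z)$ lies in $\HH_{n+1}(\RR)$. For any $\alpha, \beta \in \RR$ with $\alpha \neq 0$ and $\alpha f + \beta g \not\equiv 0$, specializing $z_{n+1} = \beta/\alpha$ and invoking Lemma \ref{l-close}(1) gives $f + (\beta/\alpha) g \in \HH_n(\RR)$, hence $\alpha f + \beta g \in \HH_n(\RR)$. The remaining case $\alpha = 0$, $\beta \neq 0$ reduces via Lemma \ref{krein}(4) to the stability of $g$.

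\emph{Necessity.} Assume every nonzero element of the pencil is stable and $(f,g) \neq (0,0)$. If $f$ and $g$ are $\RR$-linearly dependent the required relation is immediate, so assume they are linearly independent and both nonzero; then both are stable by hypothesis and so nonvanishing on $H^n$. Consequently $f/g$ is holomorphic on $H^n$, and the crux is that \emph{it omits every real value}: were $f(z) = r g(z)$ for some $r \in \RR$ and some $z \in H^n$, then $f - rg$ would be a nonzero element of the pencil vanishing in $H^n$, contradicting its stability. Since $H^n$ is connected and $\CC \setminus \RR$ has two connected components, $\Im(f/g)$ has constant sign on $H^n$. If $\Im(f/g) > 0$ there, Lemma \ref{krein}(4) yields $g \ll f$; if $\Im(f/g) < 0$, then $\Im(g/f) > 0$ on $H^n$ (using that $f$ is also nonvanishing there), and the same lemma gives $f \ll g$.

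\emph{Wronskian signs.} Suppose $g \ll f$ and set $h = f + ig \in \HH_n(\CC)$. Fix $j \in \{1, \ldots, n\}$ and $x \in \RR^n$ with $h(x) \neq 0$. For every $\epsilon > 0$ the function $t \mapsto h(x + t e_j + i\epsilon\mathbf{1})$ is univariate stable (since $x + i\epsilon \mathbf{1} \in H^n$), so Theorem \ref{mult-hur} identifies the limit $\tilde h(t) = h(x + te_j)$ as stable or identically zero; the latter is excluded by $\tilde h(0) = h(x) \neq 0$. The classical one-variable estimate $\Im(\tilde h'(0)/\tilde h(0)) \le 0$ for stable $\tilde h$, combined with the direct computation
\[
\Im\!\left(\frac{\partial_j h(x)}{h(x)}\right) = \frac{W_j[g,f](x)}{f(x)^2 + g(x)^2},
\]
yields $W_j[g,f](x) \le 0$. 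Since $\{h \neq 0\} \cap \RR^n$ is dense in $\RR^n$ (as $f$ and $g$ are not both identically zero), continuity extends the inequality to all of $\RR^n$. The case $f \ll g$ follows by symmetry using $W_j[g,f] = -W_j[f,g]$.

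The main obstacle is the necessity direction, concentrated in the observation that $f/g$ cannot take real values on $H^n$; from there, connectedness of $H^n$ forces $\Im(f/g)$ to have uniform sign, and Lemma \ref{krein} closes the argument essentially for free. Sufficiency and the Wronskian inequalities are then straightforward applications of that lemma, Hurwitz' theorem, and the univariate case.
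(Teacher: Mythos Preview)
Your proof is correct and follows essentially the same route as the paper's: sufficiency via Lemma~\ref{krein}(1)$\Leftrightarrow$(3) together with Lemma~\ref{l-close}(1), necessity via the connectedness argument showing $\Im(f/g)$ has fixed sign on $H^n$, and the Wronskian inequality by restricting to a real line through $x$ in the $e_j$-direction. The only cosmetic difference is in the Wronskian step, where you compute $\Im(\partial_j h/h)$ for $h=f+ig$ while the paper expands $f/g$ to first order; both encode the same univariate fact, and your version has the mild advantage of needing only $h(x)\neq 0$ rather than $g(x)\neq 0$.
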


\begin{proof}
If $f$ and $g$ are linearly dependent then both conditions say that $f$ and $g$ are real stable or zero and then the Wronskians are all zero. Hence we may assume that $f,g \in \RR[z_1,\ldots, z_n]\setminus \{0\}$ are not constant multiples of each other. 

The ``if'' direction follows from Lemma~\ref{krein} (1) $\Rightarrow$ (3) and Lemma \ref{l-close} (1). 

For the converse assume that all non-zero polynomials in the space 
$
\{\alpha f + \beta g : \alpha, \beta \in \RR\}
$
are stable  and that there exist 
$z,w\in \{\zeta \in \CC: \Im(\zeta)>0\}^n$ for which 
$$
\Im\!\left(\frac{f(z)}{g(z)}\right) > 0 \quad \mbox{ and } \quad  
\Im\!\left(\frac{f(w)}{g(w)}\right) < 0. 
$$
By connectivity there is a number 
$v \in \{\zeta \in \CC: \Im(\zeta)>0\}^n$ for which 
$f(v)/g(v) = a \in \RR \setminus \{0\}$. It follows that $f-ag$ is identically zero which we assumed was not the case. Hence 
$\Im(f(z)/g(z))$ has constant sign for 
$z  \in \{\zeta \in \CC: \Im(\zeta)>0\}^n$ and  we conclude, by 
Lemma~\ref{krein} (4) $\Rightarrow$ (1),  that 
either $f \ll g$ or $g \ll f$. 

Suppose now that $g \ll f$, where we may assume that $g$ is not identically 
zero. Let $x \in \RR^n$ be such that $g(x) \neq 0$ and consider the rational 
function $q(t)= f(x+e_jt)/g(x+e_jt)$, where $e_j$ is the $j$-th standard 
basis vector of $\RR^n$. Hence $q(t)$ is analytic at
 the origin with a first order Taylor expansion given by 
$$
q(t)=\frac{f(x)}{g(x)}- \frac {W_j[g,f](x)}{g^2(x)}t + O(t^2). 
$$
From Lemma \ref{krein} and the fact that $\Im(f(x)/g(x))=0$ 
we then get $W_j[g,f](x) \leq 0$. Since the set of 
all $x \in \RR^n$ for which 
$g(x) \neq 0$ is dense in $\RR^n$ the theorem follows. 
\end{proof}

\begin{corollary}\label{cor-HB}
If $f,g \in \RR[z_1,\ldots, z_n]\setminus \{0\}$ are real stable polynomials
such that $f\ll g$ and $g\ll f$ then $f=\alpha g$ for some $\alpha\in\RR$.
\end{corollary}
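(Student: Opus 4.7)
The plan is to deduce the statement directly from the Wronskian inequalities in the second half of Theorem~\ref{th-HKO}. Applying that theorem with the hypothesis $f \ll g$ yields $W_j[g,f](x)\ge 0$ for every $x\in\RR^n$ and every $1\le j\le n$, and applying it with the hypothesis $g\ll f$ yields the reverse inequality $W_j[g,f](x)\le 0$ on $\RR^n$. Combined, the polynomial
$$
W_j[g,f]=f\cdot\frac{\partial g}{\partial z_j}-g\cdot\frac{\partial f}{\partial z_j}
$$
vanishes identically on $\RR^n$, and since a polynomial that vanishes on $\RR^n$ is the zero polynomial, $W_j[g,f]\equiv 0$ on $\CC^n$ for every $j$.

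Next I would translate this into a statement about the quotient $g/f$. On the open dense set $\{f\neq 0\}$ the quotient rule gives
$$
\frac{\partial}{\partial z_j}\!\left(\frac{g}{f}\right)=\frac{W_j[g,f]}{f^2}\equiv 0,\qquad 1\le j\le n,
$$
so $g/f$ has zero gradient as an element of the rational function field $\CC(z_1,\ldots,z_n)$. A rational function whose partial derivatives all vanish is a constant: writing it in lowest terms as $P/Q$ with $\gcd(P,Q)=1$, the identity $Q\cdot\partial_{z_j}P=P\cdot\partial_{z_j}Q$ forces $Q\mid\partial_{z_j}Q$, which for degree reasons compels $\partial_{z_j}Q\equiv 0$ and then $\partial_{z_j}P\equiv 0$ as well, for every $j$. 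Hence $g=cf$ for some $c\in\CC$, and because $f,g\in\RR[z_1,\ldots,z_n]$ with $f\not\equiv 0$ the constant $c$ must be real. Since $g\not\equiv 0$ we also have $c\neq 0$, so setting $\alpha=1/c$ gives $f=\alpha g$ with $\alpha\in\RR$, as required.

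The whole argument is essentially a one-line application of Theorem~\ref{th-HKO} followed by the elementary fact that a rational function is determined up to constants by its gradient. I do not foresee any substantial obstacle; the only point deserving attention is that one must use both Wronskian inequalities in Theorem~\ref{th-HKO} simultaneously, so the proof truly relies on the corollary's symmetric hypothesis $f\ll g$ and $g\ll f$ rather than on a single proper-position assumption.
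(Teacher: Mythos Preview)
Your proof is correct and follows essentially the same approach as the paper's: both apply the two Wronskian inequalities of Theorem~\ref{th-HKO} (one from $f\ll g$, one from $g\ll f$) to force $W_j[g,f]\equiv 0$ for every $j$, and then conclude that $g/f$ is a constant. The paper presents this slightly more tersely (phrasing $g\ll f$ equivalently as $f\ll -g$ and concluding directly that $g/f$ is a real constant from the vanishing of all partial derivatives on the dense set $\{f\neq 0\}\subset\RR^n$), while you spell out the ``rational function with zero gradient is constant'' step in the field $\CC(z_1,\ldots,z_n)$; this is a minor expository difference, not a substantive one.
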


\begin{proof}
If $f,g \in \RR[z_1,\ldots, z_n]\setminus \{0\}$ are real stable polynomials
such that $f\ll g$ and $g\ll f$ then $f \ll g$ and $f \ll -g$. Hence, by Theorem \ref{th-HKO}, 
$$
\frac \partial {\partial z_j} \left( \frac g f \right)(x) = \frac {W_j[g,f](x)}{f(x)^2}=0
$$
for all $1\leq j \leq n$ and $x \in \RR^n$ for which $f(x) \neq 0$. Thus $g/f$ is a real constant. 
\end{proof}

\section{Hard-Core/Algebraic Classification: Sufficiency}\label{s-6}

\subsection{Multi-affine Polynomials and the Lieb-Sokal Lemma}\label{s-5}
A polynomial $f \in \CC[z_1,\ldots, z_n]$ is {\em multi-affine} if all 
monomials in its Taylor expansion are square-free, i.e., if 
$f$ can be written as 
$$
f(z)= \sum_{S \subseteq [n]} a(S) z^S, \text{ where } 
z^S := \prod_{i \in S} z_i,\ \  a(S) \in \CC, \ \ [n]=\{1,\ldots,n\}. 
$$
Hence $f \in \CC[z_1,\ldots, z_n]$ is multi-affine if and only if 
$f \in \CC_{(1^n)}[z_1,\ldots, z_n]$, 
where $(1^n)=(1,\ldots, 1) \in \NN^n$. 

The following lemma (in the case $n=1$) is due to Lieb and Sokal 
\cite[Lemma~2.3]{LS}. For completeness we provide here a short proof 
essentially based on the same idea.
  
\begin{lemma}[Lieb-Sokal]\label{Li-So}
Let $P(z)+wQ(z) \in \CC[z_1,\ldots,z_n,w]$ be stable. 
If the degree in the variable $z_j$ is at most one then the polynomial 
$$
P(z)-\frac {\partial Q(z)}{\partial z_j}
$$
is either identically zero or stable. 
\end{lemma}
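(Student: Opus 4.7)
My plan is to apply a holomorphic shift in $z_j$ that, combined with multiplying by an appropriate power of $w$, converts $P+wQ$ into a stable polynomial whose coefficient of $w^1$ is exactly $P-\partial Q/\partial z_j$. The desired statement then drops out by differentiating in $w$ and specializing $w=0$, using only closure properties already in hand.

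The unlocking observation is that $\Im(-1/w)=\Im(w)/|w|^2>0$ whenever $w\in H$, so the substitution $z_j\mapsto z_j-1/w$ sends $H^n\times H$ into itself in the $z_j$-slot. Stability of $P+wQ$ therefore produces a non-vanishing function of $(z,w)\in H^n\times H$. Since $\deg_{z_j}(P+wQ)\le 1$, this shift is affine in $z_j$, and a direct expansion gives
\[
w\Bigl[P\bigl(\ldots,z_j-\tfrac{1}{w},\ldots\bigr)+wQ\bigl(\ldots,z_j-\tfrac{1}{w},\ldots\bigr)\Bigr]
= -\partial_{z_j}P+w\bigl(P-\partial_{z_j}Q\bigr)+w^2 Q.
\]
Multiplication by $w$ kills the only pole at $w=0$ without disturbing non-vanishing on $H$, so the polynomial $G(z,w):=-\partial_{z_j}P+w(P-\partial_{z_j}Q)+w^2 Q$ lies in $\HH_{n+1}(\CC)$.

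To finish I differentiate: $\partial_w G=(P-\partial_{z_j}Q)+2wQ$. Partial derivatives preserve stability (a multivariate Gauss--Lucas statement that follows immediately from Lemma \ref{l-obv} and the classical univariate Gauss--Lucas theorem), whence $\partial_w G\in\HH_{n+1}(\CC)\cup\{0\}$. Specializing $w=0$ via Lemma \ref{l-close}(1) then yields $P-\partial_{z_j}Q\in\HH_n(\CC)\cup\{0\}$, as required.

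The only non-routine step is spotting the substitution $z_j\mapsto z_j-1/w$: the interplay between the degree-$\le 1$ hypothesis in $z_j$ and the fact that $-1/w\in H$ for $w\in H$ is precisely what makes the shift produce a single $1/w$ term, which cancels against multiplication by $w$, leaving $P-\partial_{z_j}Q$ exposed as the middle coefficient of a stable polynomial. After that, the argument is purely formal, relying only on closure properties established in \S\ref{new-s-prel}.
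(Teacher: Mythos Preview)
Your proof is correct and shares with the paper the pivotal idea: the shift $z_j\mapsto z_j-w^{-1}$ stays in $H$ when $w\in H$. The execution differs in how the conclusion is extracted. The paper applies the shift only to $Q$, obtaining $wQ(z)-\partial_{z_j}Q(z)=wQ(\ldots,z_j-w^{-1},\ldots)\in\HH_{n+1}(\CC)$, and then invokes the imaginary-part additivity of Remark~\ref{r-krein} to combine $\Im(P/Q)\ge 0$ with $\Im(-\partial_{z_j}Q/Q)\ge 0$ and conclude that $P-\partial_{z_j}Q+wQ$ is stable before setting $w=0$. You instead shift the entire polynomial $P+wQ$, land on a quadratic in $w$, and differentiate to expose the middle coefficient. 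Your route trades Remark~\ref{r-krein} for closure of stability under $\partial_w$; this fact is standard but is not among the closure properties recorded in Lemma~\ref{l-close}, and your pointer to Lemma~\ref{l-obv} is not quite the right ingredient (that lemma does not directly control $\partial_{z_j}f$ along real lines). The correct justification is univariate Gauss--Lucas applied after fixing $z\in H^n$, together with the observation (via scaling $w\mapsto w/\epsilon$ and Hurwitz) that the $w$-degree of a stable polynomial cannot drop when the remaining variables are specialized in $H^n$. With that caveat, both arguments are equally short; the paper's stays entirely within tools it has already built, while yours avoids Remark~\ref{r-krein} at the cost of importing one extra closure property.
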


\begin{proof}
We may assume that $Q(z)$ is not identically equal to zero and 
that $j=1$. 
Since $Q(z)$ is stable and $\Im(w)>0\Leftrightarrow \Im(-w^{-1})>0$ 
the polynomial  
$$
wQ(z)-\frac{\partial Q(z)}{\partial z_1}
= wQ(z_1-w^{-1},z_2,\ldots, z_n)
$$ 
is stable. Hence, by Remark \ref{r-krein} (with $\theta=0$) one has
$$
\Im\!\left(\frac {P(z)-\partial Q(z)/\partial z_1}{Q(z)}\right) 
= \Im\!\left(\frac {P(z)}{Q(z)}\right) 
+ \Im\left(\frac {-\partial Q(z)/\partial z_1}{Q(z)}\right)  \geq 0
$$
for all $z \in \{\zeta \in \CC : \Im(\zeta)>0\}^n$, so by 
Remark \ref{r-krein} again the  
polynomial 
$$
P(z)-\frac{\partial Q(z)}{\partial z_1}+wQ(z)
$$
is stable. In light of Lemma~\ref{l-close} (1), this implies in particular 
the conclusion of the lemma (letting $w=0$).
\end{proof}

We can now settle the sufficiency part of Theorem~\ref{multi-finite-stab} 
in the
case of multi-affine polynomials, i.e., for $\kappa=(1,\ldots,1)$. 

\begin{lemma}\label{ma-case}
Let $T : \CC_{(1^n)}[z_1,\ldots,z_n] \rightarrow 
\CC[z_1,\ldots,z_n]$ be a
linear operator such that
$$
G_{T}(z,w):=T\!\left[(z+w)^{[n]}\right] 
=\sum_{S \subseteq [n]} T\left[z^S\right]w^{[n] 
\setminus S}  
$$
is stable, where $z=(z_1,\ldots,z_m)$ and $w=(w_1,\ldots,w_n)$. 
Then $T$ preserves stability. 
\end{lemma}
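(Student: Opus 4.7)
The plan is to exploit Lemma~\ref{Li-So} iteratively, using a fresh copy of $f$ as an auxiliary polynomial, and then to translate the outcome back through the ``reversal'' involution afforded by Lemma~\ref{l-close}~(3).

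First I would introduce new variables $y = (y_1, \ldots, y_n)$ and consider the polynomial $\Psi(z, w, y) := G_T(z, w)\, f(y)$, which is multi-affine in each $w_i$ and each $y_i$, and stable (or identically zero) on $\CC^{3n}$ as a product of stable polynomials in disjoint sets of variables. I would then apply Lemma~\ref{Li-So} $n$ times in succession: at the $i$-th step I write the current polynomial in the form $P + w_i Q$ and invoke Lemma~\ref{Li-So} with the role of the differentiation variable played by $y_i$ (which has degree at most one thanks to the multi-affine structure, a property easily checked to persist from step to step). The outcome of each step is the formal substitution $w_i \mapsto -\partial_{y_i}$, applied to the previous stable polynomial. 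After the $n$-fold iteration, $G_T(z, -\partial_y)\, f(y)$ is stable or zero in $(z, y)$; evaluating at $y = 0$ via Lemma~\ref{l-close}~(1) applied in each $y_i$ then yields a stable (or zero) polynomial in $z$.

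A direct expansion using $G_T(z, w) = \sum_{S \subseteq [n]} T(z^S)\, w^{[n]\setminus S}$ shows that this polynomial equals $T(f^\star)(z)$, where
$$
f^\star(z) := z_1 \cdots z_n\, f(-z_1^{-1}, \ldots, -z_n^{-1}) = \sum_{S \subseteq [n]} (-1)^{|S|}\, a(S)\, z^{[n] \setminus S}
$$
is the ``reversal'' of $f = \sum_S a(S) z^S$. Hence $T(f^\star)$ is stable or identically zero.

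To derive the lemma from this, I would invoke Lemma~\ref{l-close}~(3), applied in each variable in turn, to see that $f \mapsto f^\star$ sends stable multi-affine polynomials to stable multi-affine polynomials and satisfies $(f^\star)^\star = (-1)^n f$. Given any stable multi-affine $g$, one then chooses $f := g^\star$ (stable) and applies what was just proved, obtaining $T((-1)^n g) = (-1)^n T(g)$ stable or zero, hence $T(g)$ stable or zero. The main obstacle I anticipate is the bookkeeping in the iterated Lieb-Sokal step, namely verifying that the multi-affine structure in the remaining $w$- and $y$-variables is preserved at each application and that the full $n$-fold iteration genuinely collapses to the substitution $w \mapsto -\partial_y$; this is direct but indexful.
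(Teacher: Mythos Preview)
Your proof is correct and uses essentially the same ingredients as the paper's: form the product of $G_T$ with $f$ in a fresh set of variables, iterate Lieb--Sokal to replace each $w_i$ by $-\partial_{y_i}$, then specialize and invoke the reversal from Lemma~\ref{l-close}~(3). The paper's organization is slightly more economical in that it applies the reversal to the $w$-variables of $G_T$ \emph{before} Lieb--Sokal (passing to $w^{[n]}G_T(z,-w^{-1})=\pm\sum_S T[z^S](-w)^S$), so that after the iteration and letting $v\to 0$ one lands directly on $T(f)=\sum_S T[z^S]f^{(S)}(0)$, bypassing your final involution argument on $f$.
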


\begin{proof}
Let $T$ be as in the lemma. Since 
$\Im(w_j)>0 \Leftrightarrow \Im(-w_j^{-1}) >0$ we have  that 
$$w^{[n]}G_T(z,-w^{-1}) \in \HH_{2n}(\CC)\Longleftrightarrow 
G_T(z,w) \in  \HH_{2n}(\CC).$$
Therefore, if $f\in\CC[v_1,\ldots,v_n]$ is stable and multi-affine we deduce 
that
$$
w^{[n]}G_T(z,-w^{-1})f(v)
= \sum_{S \subseteq [n]} T\left[z^S\right](-w)^{S}f(v) 
\in \HH_{3n}(\CC),
$$
where $v=(v_1,\ldots,v_n)$.
By repeated use of Lieb-Sokal's Lemma \ref{Li-So} we then get
$$
\sum_{S \subseteq [n]} T\left[z^S\right]f^{(S)}(v) 
\in \HH_{2n}(\CC)\cup\{0\}.
$$
Letting $\{\zeta \in \CC: \Im(\zeta)>0\}^n\ni v\to 0$ and invoking Hurwitz' 
Theorem \ref{mult-hur} we obtain
$$
T(f)=\sum_{S \subseteq [n]} T\left[z^S\right]f^{(S)}(0) 
\in \HH_{n}(\CC)\cup\{0\},
$$
which proves the lemma.
\end{proof}

\subsection{Decoupling Schemes: Polarizations of Polynomials and 
Operators}\label{ss-61}

For $\kappa =(\kappa_1,\ldots,\kappa_n)\in \NN^n$ let $\CM^\kappa$ be the 
space of multi-affine 
polynomials in the variables 
$\{ z_{ij} : 1\leq i \leq n, 1\leq j \leq \kappa_i\}$. 
Define a (linear) {\em polarization operator} 
$$
\Pi_\kappa^\uparrow : \CC_\kappa[z_1, \ldots, z_n] \rightarrow \CM^\kappa
$$ 
that associates to each $f\in \CC_\kappa[z_1, \ldots, z_n]$ 
the unique polynomial 
$\Pi_\kappa^\uparrow(f) \in \CM^\kappa$ such that 
\begin{itemize}
\item[(a)] for any $1 \leq i \leq n$ the polynomial  
$\Pi_\kappa^\uparrow(f)$ is symmetric in 
$\{ z_{ij} : 1\leq j \leq \kappa_i\}$; 
\item[(b)] if we let  $z_{ij}=z_i$ for all $1\le i\le n$ and 
$1\le j\le \kappa_i$
in $\Pi_\kappa^\uparrow(f)$ we recover $f$. 
\end{itemize}
In other words, if $\alpha \leq \kappa$ then  
$$
\Pi_\kappa^\uparrow(z^\alpha)= {\binom {\kappa} {\alpha} }^{-1} 
E_{\alpha_1}(z_{11},\ldots, z_{1\kappa_1})\cdots 
E_{\alpha_n}(z_{n1},\ldots, z_{n\kappa_n}),
$$
where $E_i(x_1,\ldots, x_m)$ is the $i$-th elementary symmetric polynomial in 
the variables $x_1, \ldots, x_m$, that is,
$$
E_i(x_1,\ldots, x_m)= \sum_{S\subseteq [m],\, |S|=i}x^S, \quad 0\leq i \leq m.
$$

Dually we define a (linear) {\em projection operator} 
$$
\Pi_\kappa^\downarrow : \CM^\kappa \rightarrow  
\CC_\kappa[z_1, \ldots, z_n]
$$  
by letting $z_{ij} \mapsto z_i$ and extending linearly. Note that by (b) 
above $\Pi_\kappa^\downarrow \circ \Pi_\kappa^\uparrow$ is the identity 
operator 
on $\CC_\kappa[z_1, \ldots, z_n]$ while 
$\Pi_\kappa^\uparrow \circ \Pi_\kappa^\downarrow$ is the operator on 
$\CM^\kappa$ that 
for each $1 \leq i \leq n$ {\em symmetrizes} all the variables in 
$\{ z_{ij} : 1\leq j \leq \kappa_i\}$.

\begin{remark}\label{decoupling}
In physics literature polarization and projection operators as above 
have mostly been used for univariate polynomials and 
are often referred to as ``decoupling procedures'' \cite{beau,LY}. Such 
procedures combined with P\'olya type results \cite[Hilfssatz II]{pol-riem}
were employed by Lee and Yang in their original proof of 
the ``circle theorem'' \cite[Appendix II]{LY} (see also Kac's comments in
\cite[pp.~424--426]{pol-coll} and \cite[\S 9]{BB-II}). Multivariate 
polarization notions similar to 
those defined above are used in the 
theory of hyperbolic polynomials and partial differential equations 
\cite{garding,hor2,Lax}.
\end{remark}

Let  $T: \CC_\kappa[z_1,\ldots, z_n] \rightarrow  
\CC_\gamma [z_1,\ldots, z_n]$ be a linear 
operator. The {\em polarization} of $T$ is defined as the linear operator
$\Pi(T) : \CM^\kappa \rightarrow \CM^\gamma$ given by 
\begin{equation}\label{polop}
\Pi(T)= \Pi_\gamma^\uparrow \circ T\circ \Pi_\kappa^\downarrow, 
\end{equation}
and conversely we have 
\begin{equation}\label{poldown}
T= \Pi_\gamma^\downarrow \circ \Pi(T)\circ \Pi_\kappa^\uparrow. 
\end{equation}

It is immediate from Lemma~\ref{l-close} (4) that the projection operator 
$\Pi_\kappa^\downarrow$ preserves stability, but the remarkable fact is that 
so does  $\Pi_\kappa^\uparrow$. This is  essentially the famous 
Grace-Walsh-Szeg\"o coincidence theorem \cite{grace,szego,walsh} albeit in 
a disguised form. For completeness let us state this theorem. 

\begin{theorem}[Grace-Walsh-Szeg\"o]\label{GWS}
Let $f$ be a symmetric multi-affine polynomial in $n$ complex variables, 
let $C$ be an open or closed circular domain, and let $\xi_1, \ldots, \xi_n$ 
be points in $C$. Suppose further that either the total degree of $f$ equals 
$n$ or that $C$ is convex (or both). Then there exists at least one point 
$\xi \in C$ such that 
\begin{equation}\label{ssss}
f(\xi_1, \ldots, \xi_n)= f(\xi, \ldots, \xi). 
\end{equation}
\end{theorem}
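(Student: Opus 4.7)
The plan is to reduce Theorem~\ref{GWS} to the classical Grace apolarity theorem on univariate polynomials.

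Since $f$ is symmetric and multi-affine, I would expand it in the basis of elementary symmetric polynomials,
\[
f(z_1,\ldots,z_n)=\sum_{k=0}^n b_k\,E_k(z_1,\ldots,z_n),\qquad b_k\in\CC,
\]
so that $g(t):=f(t,\ldots,t)=\sum_{k=0}^n b_k\binom{n}{k}t^k$ and $c:=f(\xi_1,\ldots,\xi_n)=\sum_{k=0}^n b_k E_k(\xi_1,\ldots,\xi_n)$. The identity~\eqref{ssss} then reduces to the assertion that the univariate polynomial $R(z):=g(z)-c$ has a root in $C$. I would pair $R$ with $Q(z):=\prod_{i=1}^n(z-\xi_i)$, which has exact degree $n$ with all zeros in $C$, and write both in normalized binomial form $R=\sum_k\binom{n}{k}r_k z^k$ and $Q=\sum_k\binom{n}{k}q_k z^k$. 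Using $r_0=b_0-c$, $r_k=b_k$ for $k\ge 1$, and $q_{n-k}=(-1)^k E_k(\xi)/\binom{n}{k}$, a short computation collapses the Grace pairing:
\[
\sum_{k=0}^n(-1)^k\binom{n}{k}r_k q_{n-k}
=\sum_{k=0}^n r_k E_k(\xi)
=-c+\sum_{k=0}^n b_k E_k(\xi)=0.
\]

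Thus $Q$ and $R$ are apolar in Grace's classical sense. The conclusion then follows from Grace's apolarity theorem: if two polynomials of formal degree $n$ are apolar and one of them has all of its roots in a circular region $C$, then the other also has at least one root in $C$, provided that either both polynomials actually attain degree $n$, or $C$ is convex. Here $Q$ has exact degree $n$ with all roots in $C$; the condition $\deg R=n$ amounts to $b_n\neq 0$, i.e., $f$ having total degree exactly $n$; and the convexity of $C$ is the other hypothesis of Theorem~\ref{GWS}. We therefore obtain $\xi\in C$ with $R(\xi)=0$, equivalently $g(\xi)=c=f(\xi_1,\ldots,\xi_n)$, which is~\eqref{ssss}.

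The main obstacle is Grace's apolarity theorem itself, which is a genuine theorem about the geometry of zeros of univariate polynomials. The route I would take is through Laguerre's theorem on the polar derivative: if every zero of a polynomial $P$ of degree $n$ lies in a closed circular region $K$ and $\zeta\notin K$, then every zero of $nP(z)+(\zeta-z)P'(z)$ also lies in $K$, subject to the same degree/convexity proviso. Applying this polar-derivative construction iteratively at the roots of one of the two apolar polynomials reduces the apolarity pairing to a trivial rank-one identity, from which the required contradiction with the separation hypothesis is immediate; the degree/convexity dichotomy enters uniformly at each step as a safeguard against a ``root at infinity'' that may be introduced when the polar derivative drops the degree.
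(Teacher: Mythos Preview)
The paper does not actually prove Theorem~\ref{GWS} here; immediately after the statement it says ``We give a new and self-contained proof of Theorem~\ref{GWS} in \cite[\S 2]{BB-II}.'' So there is no in-paper proof to compare against, only a pointer to the sequel where the authors claim a \emph{new} argument. Your route, by contrast, is the \emph{classical} one: reduce the symmetric multi-affine coincidence statement to Grace's apolarity theorem for univariate polynomials, and then appeal to (or re-derive) Grace via Laguerre's polar derivative.

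Your reduction is correct. The expansion $f=\sum_k b_k E_k$ gives $g(t)=f(t,\ldots,t)=\sum_k b_k\binom{n}{k}t^k$, and with $Q(z)=\prod_i(z-\xi_i)$ and $R(z)=g(z)-c$ the apolarity pairing does collapse exactly as you wrote, since $q_{n-k}=(-1)^k E_k(\xi)/\binom{n}{k}$ and hence $\sum_k(-1)^k\binom{n}{k}r_k q_{n-k}=\sum_k r_k E_k(\xi)=0$. The degree/convexity dichotomy also matches up correctly: $\deg R=n$ iff $b_n\neq 0$ iff the total degree of $f$ is $n$, while convexity of $C$ excludes $\infty$ from $C$ on the Riemann sphere, so a ``root at infinity'' of $R$ cannot be mistaken for a root in $C$. (You should add the one-line remark that if $R\equiv 0$ then $f$ is constant and the conclusion is trivial.)

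The only soft spot is the final paragraph, which is a sketch rather than a proof: iterating the polar derivative at the zeros of $Q$ does eventually reduce the apolarity relation to a nonzero constant, yielding a contradiction if $R$ had no zero in $C$, but making this precise---in particular tracking the formal degree and the r\^ole of $\infty$ at each step---is exactly the content of Grace's theorem and is not entirely trivial. If you intend this as a self-contained proof you need to spell out that induction; if you are happy to quote Grace's theorem as a black box (as the present paper is happy to quote Grace--Walsh--Szeg\H{o}), then your first two paragraphs already suffice.
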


We give a new and self-contained 
proof of Theorem \ref{GWS} in \cite[\S 2]{BB-II}.

\begin{proposition}\label{gws}
Let $f \in \CC_\kappa[z_1,\ldots, z_n]$. Then $f$ is stable if and only 
if $\Pi_\kappa^\uparrow(f)$ is stable. 
\end{proposition}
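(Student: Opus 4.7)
My plan is to handle the two directions separately, with the nontrivial direction reduced to an iterated application of the Grace--Walsh--Szeg\H{o} theorem.

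The easy direction is that stability of $\Pi_\kappa^\uparrow(f)$ implies stability of $f$. Indeed, by Lemma~\ref{l-close}(4) the substitution $z_{ij}\mapsto z_i$ (carried out one pair of variables at a time, in any order) preserves membership in $\HH_{m}(\CC)\cup\{0\}$ at each step. Iterating, the projection $\Pi_\kappa^\downarrow$ maps stable polynomials to stable polynomials, and since $f=\Pi_\kappa^\downarrow(\Pi_\kappa^\uparrow(f))$ by construction, stability of $\Pi_\kappa^\uparrow(f)$ transfers to $f$.

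For the converse, I would argue the contrapositive: assume $\Pi_\kappa^\uparrow(f)$ is not stable, so there is a point $(z_{ij}^\ast)_{i,j}$ with every $z_{ij}^\ast$ in the open upper half-plane $H$ at which $\Pi_\kappa^\uparrow(f)$ vanishes. The key observation is that for each fixed $i$, the polynomial $\Pi_\kappa^\uparrow(f)$ is symmetric and multi-affine in the block of variables $\{z_{ij}:1\le j\le\kappa_i\}$. Since $H$ is an open \emph{convex} circular domain, Theorem~\ref{GWS} applies without any constraint on the total degree, and we may collapse the first block: there exists $\zeta_1\in H$ such that
\[
\Pi_\kappa^\uparrow(f)(\zeta_1,\ldots,\zeta_1,z_{2j}^\ast,\ldots,z_{nj}^\ast)=\Pi_\kappa^\uparrow(f)(z_{1j}^\ast,\ldots,z_{nj}^\ast)=0.
\]
Repeating this argument with the second block (now the first block of variables is already collapsed to $\zeta_1$, and the remaining starred variables stay fixed in $H$), then with the third block, and so on, I produce points $\zeta_1,\ldots,\zeta_n\in H$ such that
\[
\Pi_\kappa^\uparrow(f)(\zeta_1,\ldots,\zeta_1,\zeta_2,\ldots,\zeta_2,\ldots,\zeta_n,\ldots,\zeta_n)=0.
\]
By property (b) in the definition of $\Pi_\kappa^\uparrow$, this value coincides with $f(\zeta_1,\ldots,\zeta_n)$, so $f$ vanishes at a point of $H^n$ and therefore is not stable.

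The main subtlety to verify is simply that Theorem~\ref{GWS} can be invoked block by block: this is legitimate because each block is a set of variables in which $\Pi_\kappa^\uparrow(f)$ is symmetric and multi-affine, and because $H$ is convex, so the degree hypothesis of Grace--Walsh--Szeg\H{o} is not needed. Everything else is bookkeeping with the definitions of $\Pi_\kappa^\uparrow$ and $\Pi_\kappa^\downarrow$.
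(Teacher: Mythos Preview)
Your proof is correct and follows essentially the same approach as the paper: the easy direction via $\Pi_\kappa^\downarrow$, and the nontrivial direction by contrapositive, collapsing the symmetric multi-affine blocks one at a time using Grace--Walsh--Szeg\H{o} with $H$ convex. The paper organizes the hard direction as a reduction to the univariate case (polarize one variable at a time with the others fixed in $H$) and then applies GWS in that univariate setting, but this is the same iterated-GWS argument you give, just phrased differently.
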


\begin{proof}
The ``if'' direction follows from the fact that  $\Pi_\kappa^\downarrow$ 
preserves stability. For the other direction it suffices to prove the theorem 
for univariate polynomials since we may polarize the variables one at a time 
(when the other variables are fixed in the upper half-plane). Suppose therefore
that $f(z)=\sum_{k=0}^d a_k z^k$ is a univariate stable polynomial. Then the 
polarization of $f$ is given by 
$$
F(z_1,\ldots, z_n)= \sum_{k=0}^d a_k {\binom d k}^{-1}E_k(z_1,\ldots, z_d). 
$$
If $F(\zeta_1,\ldots, \zeta_n) = 0$ for some $(\zeta_1,\ldots, \zeta_n) 
\in H^n$ then by the Grace-Walsh-Szeg\"o theorem (Theorem \ref{GWS}) there 
exists $\zeta \in H$ such that $f(\zeta) = F(\zeta,\ldots, \zeta) = 0$, 
contradicting the stability of $f$. 
\end{proof}
 
\begin{lemma}\label{polarized-symbol}
Let $T: \CC_\kappa[z_1,\ldots, z_n] \rightarrow  
\CC_\gamma [z_1,\ldots, z_n]$ be a linear operator. Then the symbol of the 
polarization of $T$ is the polarization of the symbol of $T$, that is,
$$
G_{\Pi(T)} = \Pi_{\gamma \oplus \kappa}^\uparrow(G_T), 
$$
where $\Pi_{\gamma \oplus \kappa}^\uparrow : 
\CC_{\gamma \oplus \kappa}[z_1,\ldots,z_n,w_1,\ldots, w_n] \rightarrow 
\CM^{\gamma \oplus \kappa}$. 
\end{lemma}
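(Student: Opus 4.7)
The plan is a direct verification by unfolding both sides. Since $T\mapsto G_T$ and $T\mapsto\Pi(T)$ are both linear, the identity is linear in $T$, and it suffices to match the contribution of each $T(z^\alpha)$ appearing in the standard expansion.

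Starting from the right-hand side, write
$$
G_T(z,w) = \sum_{\alpha\le\kappa}\binom{\kappa}{\alpha}T(z^\alpha)\,w^{\kappa-\alpha}.
$$
Because $\Pi_{\gamma\oplus\kappa}^\uparrow$ acts independently on the $z$-block (producing $\gamma_i$ copies) and the $w$-block (producing $\kappa_i$ copies), and because
$$
\Pi_\kappa^\uparrow(w^{\kappa-\alpha}) = \binom{\kappa}{\alpha}^{-1}\prod_{i=1}^n E_{\kappa_i-\alpha_i}(w_{i1},\ldots,w_{i\kappa_i})
$$
(using $\binom{\kappa}{\kappa-\alpha}=\binom{\kappa}{\alpha}$), the binomial coefficients cancel and one obtains
$$
\Pi_{\gamma\oplus\kappa}^\uparrow(G_T) = \sum_{\alpha\le\kappa}\Pi_\gamma^\uparrow\bigl(T(z^\alpha)\bigr)\prod_{i=1}^n E_{\kappa_i-\alpha_i}(w_{i1},\ldots,w_{i\kappa_i}).
$$

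For the left-hand side, I would apply the definition $\Pi(T)=\Pi_\gamma^\uparrow\circ T\circ\Pi_\kappa^\downarrow$ to the multi-affine generating polynomial $\prod_{i,j}(z_{ij}+w_{ij})$, the product being taken over $1\le j\le\kappa_i$; this is the multi-affine analogue of $(z+w)^\kappa$ and by definition of the symbol its image under $\Pi(T)$ is $G_{\Pi(T)}$. The projection $\Pi_\kappa^\downarrow$ collapses $z_{ij}\mapsto z_i$, and the elementary identity $\prod_{j=1}^{\kappa_i}(z_i+w_{ij}) = \sum_{\alpha_i=0}^{\kappa_i}z_i^{\alpha_i}E_{\kappa_i-\alpha_i}(w_{i1},\ldots,w_{i\kappa_i})$ rewrites the resulting input as $\sum_{\alpha\le\kappa}z^\alpha\prod_i E_{\kappa_i-\alpha_i}(w_{i1},\ldots,w_{i\kappa_i})$. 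Applying $T$ (which leaves the $w$-variables untouched as scalar coefficients) and then $\Pi_\gamma^\uparrow$ on the $z$-block gives exactly the same double sum as above, which proves equality.

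The entire argument is essentially symbolic bookkeeping; no conceptual obstacle arises once one notices the dual role of the generating polynomial $\prod_j(z_i+w_{ij})$: on the $\Pi(T)$ side it is the natural ``test input'' whose image encodes the symbol, and on the $G_T$ side it is precisely what one obtains by polarizing $(z_i+w_i)^{\kappa_i}$ in the $w$-variable. The only care needed is to keep the output block $z_{ij}$ ($1\le j\le\gamma_i$) and the dual block $w_{ij}$ ($1\le j\le\kappa_i$) polarized with the correct multiplicities, which is automatic from the definition of $\gamma\oplus\kappa$.
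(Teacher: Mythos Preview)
Your argument is correct and follows essentially the same route as the paper: both compute $G_{\Pi(T)}$ by applying $\Pi(T)=\Pi_\gamma^{\uparrow}\circ T\circ\Pi_\kappa^{\downarrow}$ to the multi-affine test input $\prod_{i,j}(z_{ij}+w_{ij})$ and then recognize the result as $\Pi_{\gamma\oplus\kappa}^{\uparrow}(G_T)$. The only cosmetic difference is that you carry out the identification by an explicit expansion in elementary symmetric polynomials, whereas the paper phrases the same step via the operator identities $\Pi_\kappa^{\downarrow z}\circ\Pi_\kappa^{\uparrow z}=\mathrm{Id}$ and the commutation of $T$ with $\Pi_\kappa^{\uparrow w}$, together with the observation $\Pi_\kappa^{\uparrow w}\big[(z+w)^\kappa\big]=\prod_{i,j}(z_i+w_{ij})$ that you also single out.
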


\begin{proof} 
We have that 
$$
\Pi_\kappa^{\downarrow z}\!\left[\prod_{i=1}^n\prod_{j=1}^{\kappa_i}
(z_{ij}+w_{ij})\right]=  
\Pi_\kappa^{\downarrow z}\Big[ (\Pi_{\kappa}^{\uparrow z}\circ 
\Pi_{\kappa}^{\uparrow w})\big[(z+w)^\kappa\big]\Big], 
$$
where the upper indices $z$ and $w$ indicate that the corresponding 
operators act only on the $z$-variables and 
$w$-variables, respectively. Hence the symbol of the polarized operator 
may be written as
\begin{eqnarray*}
G_{\Pi(T)}&=& (\Pi_\gamma^{\uparrow z}\circ T\circ \Pi_\kappa^{\downarrow z})
\left[ \prod_{i=1}^n\prod_{j=1}^{\kappa_i}(z_{ij}+w_{ij})\right] \\
&=&(\Pi_\gamma^{\uparrow z}\circ T\circ \Pi_\kappa^{\downarrow z})
\Big[ (\Pi_{\kappa}^{\uparrow z}\circ 
\Pi_{\kappa}^{\uparrow w})\big[(z+w)^\kappa\big]\Big].
\end{eqnarray*}
Using the fact that $\Pi_\kappa^{\downarrow z}\circ\Pi_{\kappa}^{\uparrow z}$  
is the identity operator and that $T$ and $\Pi_{\kappa}^{\uparrow w}$ commute 
(since $T$ only acts on the $z$-variables) we get 
$$
G_{\Pi(T)}= (\Pi_\gamma^{\uparrow z} \circ \Pi_{\kappa}^{\uparrow w}\circ T)
\big[(z+w)^\kappa\big]=\Pi_{\gamma \oplus \kappa}^\uparrow(G_T),
$$
as desired.
\end{proof}

\begin{proof}[Proof of Sufficiency in Theorem~\ref{multi-finite-stab}] 
If $G_T$ is stable then so is $G_{\Pi(T)}$ by Lemma~\ref{polarized-symbol} 
and Proposition~\ref{gws}. 
From Lemma~\ref{ma-case} we deduce that $\Pi(T)$ preserves stability, and then
by equation \eqref{poldown} so does $T$ since in view of 
Lemma~\ref{l-close} and 
Proposition~\ref{gws} both operators $\Pi_\gamma^\downarrow$ and 
$\Pi_\kappa^\uparrow$ preserve stability.
\end{proof}

\section{Algebraic Classification: Necessity in the Complex Case}\label{s-4}

We will need the following simple lemma which makes it clear that there are 
plenty of stable polynomials. 

\begin{lemma}\label{plenty}  
Let $f \in \CC_\kappa[z_1,\ldots, z_n]$, where 
$\kappa=(\kappa_1,\ldots,\kappa_n)\in\NN^n$, and 
$W=(W_1,\ldots,W_n) \in \{\zeta \in \CC: \Im(\zeta)>0\}^n$. Then  
for all $\epsilon>0$ sufficiently small one has
$$
(z+W)^\kappa + \epsilon f(z) \in \HH_n(\CC), \,\text{ where }\,
(z+W)^\kappa=(z_1+W_1)^{\kappa_1}\cdots (z_n+W_n)^{\kappa_n}.
$$
\end{lemma}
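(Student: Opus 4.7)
My plan is to factor the perturbed polynomial and reduce everything to a quotient bound. First, observe that the ``base'' polynomial $P(z):=(z+W)^\kappa=\prod_{i=1}^n(z_i+W_i)^{\kappa_i}$ is itself stable: its only zeros are $z_i=-W_i$, and since $\Im(-W_i)=-\Im(W_i)<0$ these lie outside $H^n$. In fact $P$ is nonvanishing on an open neighborhood of the closed polyhalf-plane $\overline{H^n}$, so I may write
$$
(z+W)^\kappa+\epsilon f(z)=P(z)\bigl(1+\epsilon g(z)\bigr),\qquad g(z):=\frac{f(z)}{P(z)},
$$
with $g$ holomorphic on a neighborhood of $\overline{H^n}$. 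Provided I can establish a uniform bound $|g(z)|\le M$ on $\overline{H^n}$, then for any $0<\epsilon<1/M$ the factor $1+\epsilon g$ satisfies $|1+\epsilon g|\ge 1-\epsilon M>0$ throughout $H^n$, and so $(z+W)^\kappa+\epsilon f$ has no zeros there, placing it in $\HH_n(\CC)$.

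Thus the whole lemma reduces to the boundedness of $g=f/P$ on $\overline{H^n}$, which I expect to be the only genuine point of the proof since a priori $f/P$ could blow up at infinity. Expanding $f(z)=\sum_{\alpha\le\kappa}c_\alpha z^\alpha$, it suffices to bound each term uniformly, and so I reduce further to the one-variable claim that
$$
\frac{|z_i|^{\alpha_i}}{|z_i+W_i|^{\kappa_i}}\le C_i\qquad\text{for all }z_i\in\overline{H},\ \alpha_i\le\kappa_i.
$$

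To verify each such one-variable estimate I split $\overline{H}$ into two regions. When $|z_i|\le 2|W_i|$, the numerator is trivially bounded by $(2|W_i|)^{\alpha_i}$, and since $z_i\in\overline{H}$ gives $\Im(z_i+W_i)\ge\Im(W_i)>0$, the denominator is bounded below by $\Im(W_i)^{\kappa_i}>0$. When $|z_i|>2|W_i|$, the triangle inequality yields $|z_i+W_i|\ge|z_i|/2$, so the ratio is at most $2^{\kappa_i}|z_i|^{\alpha_i-\kappa_i}\le 2^{\kappa_i}$ since $\alpha_i\le\kappa_i$. Multiplying these uniform bounds over $i$ and summing over $\alpha$ produces the required constant $M$, and choosing any $\epsilon\in(0,1/M)$ completes the proof.
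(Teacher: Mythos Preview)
Your proof is correct and follows the same strategy as the paper: both show that $|f(z)/(z+W)^\kappa|$ is uniformly bounded on $\overline{H}^{\,n}$ and then conclude via $|(z+W)^\kappa+\epsilon f|\ge |(z+W)^\kappa|(1-\epsilon M)>0$. The paper's execution is a touch cleaner: it expands $f$ in the shifted basis $(z+W)^\alpha$ rather than in $z^\alpha$, so each term contributes $|(z+W)^{\alpha-\kappa}|\le \prod_i \Im(W_i)^{\alpha_i-\kappa_i}$ immediately, with no need for the two-region case split. One small slip in your write-up: in the region $|z_i|>2|W_i|$ the inequality $|z_i|^{\alpha_i-\kappa_i}\le 1$ can fail when $\alpha_i<\kappa_i$ and $2|W_i|<1$; the correct supremum over that region is $(2|W_i|)^{\alpha_i-\kappa_i}$, which is still a finite constant depending only on $W$, so the argument goes through unchanged.
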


\begin{proof}
Set $Y=(\Im(W_1), \ldots, \Im(W_n))\in\RR_+^n$. For  
$\alpha \leq \kappa$ we then have 
$$
\left|\frac{(z+W)^\alpha}{(z+W)^\kappa}\right| \leq Y^{\alpha-\kappa}\,\text{ if }\, 
z \in \{ \zeta \in \CC : \Im(\zeta) \geq 0 \}^n. 
$$ 
Expanding $f$ in powers of 
$z+W$ we see that  there exists $\epsilon_0 >0$ such that 
$$
\frac {|f(z)|}{|(z+W)^\kappa|}< \frac{1}{\epsilon_0}\,\text{ if }\,
z \in \{ \zeta \in \CC : \Im(\zeta) \geq 0 \}^n. 
$$
Hence 
$
|(z+W)^\kappa + \epsilon f(z)| \geq |(z+W)^\kappa| - \epsilon |f(z)| >0$ for 
all  $z \in \{ \zeta \in \CC : \Im(\zeta) \geq 0 \}^n$ and 
$\epsilon\in(0,\epsilon_0)$, so that 
in particular $(z+W)^\kappa + \epsilon f(z) \in  \HH_n(\CC)$ for all such
$\epsilon$. 
\end{proof}

\begin{remark}
From the proof of Lemma \ref{plenty} it follows that the topological 
dimension of the set $\HH_n(\CC)\cap \CC_\kappa[z_1,\ldots, z_n]$ (of all 
stable polynomials in $n$ variables of degree at most $\kappa$) equals 
$(1+\kappa_1)\cdots (1+\kappa_n)$. In particular, this explains the 
statement made in \cite[\S 1]{BBL} concerning the topological dimension of 
the set of so-called strongly Rayleigh probability measures on the Boolean 
algebra $2^{[n]}$, where $[n]=\{1,\ldots,n\}$.
\end{remark}

\begin{lemma}\label{spaces}
Let $V \subseteq \KK[z_1,\ldots, z_n]$ be a $\KK$-linear space, 
where $\KK=\RR$ or $\CC$.
\begin{itemize}
\item[(i)] If $\KK=\RR$ and every non-zero element of $V$ is real stable 
then $\dim V \leq 2$.
\item[(ii)]
If $\KK=\CC$ and every non-zero element of $V$ is stable 
then $\dim V \leq 1$.
\end{itemize}
\end{lemma}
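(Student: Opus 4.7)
The key observation is that stability is a strong non-vanishing condition: any stable polynomial takes only non-zero values on $H^n=\{\zeta\in\CC:\Im(\zeta)>0\}^n$. My plan is to exploit this by means of a single point-evaluation.

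Fix any $a\in H^n$ and consider the evaluation map
$$
\epsilon_a:V\longrightarrow \CC,\qquad f\longmapsto f(a).
$$
This is manifestly $\KK$-linear. If $f\in\ker\epsilon_a$ is non-zero, then by hypothesis $f$ is (real) stable, so $f$ does not vanish anywhere on $H^n$; but $a\in H^n$ and $f(a)=0$, a contradiction. Hence $\ker\epsilon_a=\{0\}$ and $\epsilon_a$ is injective, giving
$$
\dim_\KK V\leq \dim_\KK\CC.
$$

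Specializing yields both statements at once: when $\KK=\CC$ we have $\dim_\CC\CC=1$, proving (ii), and when $\KK=\RR$ we have $\dim_\RR\CC=2$, proving (i). I do not foresee any real obstacle here; the only subtlety is noticing that the two cases differ solely because $\CC$ has $\RR$-dimension $2$ but $\CC$-dimension $1$, which is exactly why (i) allows a two-dimensional space (matching the $f\ll g$ pairs of Theorem~\ref{th-HKO}) while (ii) forces one-dimensionality.
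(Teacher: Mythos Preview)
Your proof is correct and is genuinely different from the paper's argument. The paper proves (i) by a homotopy/interlacing argument: assuming three independent elements $f_1,f_2,f_3$, it uses the multivariate Hermite--Kakeya--Obreschkoff theorem (Theorem~\ref{th-HKO}) and Hurwitz' theorem to slide along the segment $\ell_\theta=\theta f_3+(1-\theta)f_2$ until $f_1\ll\ell_\eta$ and $f_1\gg\ell_\eta$ simultaneously, then invokes Corollary~\ref{cor-HB} to force a linear dependence. Part (ii) is then deduced from (i) via the ``real component'' $V_R$ together with a Wronskian computation showing that any second element must be a complex scalar multiple of a chosen $p+iq$.

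Your evaluation-at-a-point argument bypasses all of this machinery: no interlacing, no proper position, no Wronskians, no Hurwitz. It is strictly more elementary and handles both cases in one stroke, with the distinction between $\dim\le 1$ and $\dim\le 2$ falling out automatically from $\dim_\KK\CC$. The paper's route does have the incidental virtue of exercising the structural relation $\ll$ and the Wronskian criterion (which are needed elsewhere), but for the bare statement of Lemma~\ref{spaces} your argument is cleaner.
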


\begin{proof}
We first deal with the real case. Suppose that there are three linearly 
independent polynomials $f_1,f_2$  and $f_3$ 
in $V$. By the multivariate Hermite-Kakeya-Obreschkoff theorem 
(Theorem \ref{th-HKO}) and the assumption on $V$ 
these polynomials are mutually in proper position. 
Without loss of generality we may assume that 
$f_1 \ll f_2$ and $f_1 \gg f_3$. Now consider the line  
segment in $V$ given by $\ell_\theta =\theta f_3 + (1-\theta) f_2$, 
$0 \leq \theta \leq 1$, and note in particular that for any such $\theta$ one 
has either $f_1\ll \ell_\theta$ or $f_1\gg \ell_\theta$.
Set $\eta=\sup\{\theta\in[0,1]:f_1\ll \ell_\theta\}$.
Since $f_1 \ll \ell_0$ and $f_1 \gg \ell_1$ it follows from Hurwitz' theorem  
(Theorem \ref{mult-hur}) that $f_1 \ll \ell_\eta$ and $f_1 \gg \ell_\eta$. 
This means that $f_1$ and $\ell_\eta$ are constant multiples of each other
(cf.~Corollary \ref{cor-HB}), 
contrary to the assumption that $f_1,f_2$  and $f_3$ are linearly independent. 

For the complex case 
let 
$$
V_R= \{ p : p+iq \in V\text{ with } p,q\in \RR[z_1,\ldots,z_n] \}
$$ 
be the ``real component'' of $V$. By Theorem~\ref{th-HKO} all 
non-zero polynomials 
in $V_R$ are real stable
hence $\dim_{\RR} V_R \leq 2$ by part (i) proved above. Note that 
$V$ is the complex span of $V_R$. If $\dim_{\RR} V_R \leq 1$ we are done so 
we may assume that 
$\{p,q\}$ is a basis of $V_R$ with $f:=p+iq \in V$ which by assumption is
a (not identically zero) stable polynomial. By 
Theorem~\ref{th-HKO} again we have $W_j[p,q](x) \geq 0$, $1\leq j \leq n$, 
$x\in\RR^n$, and since $p$ and $q$ are linearly 
independent the $j_0$-th Wronskian $W_{j_0}[p,q]$ 
is not identically zero for some index $j_0$. 
Now if $g\in V\setminus\{0,f\}$ we may write 
$$
g= ap+bq+i(cp+dq) 
$$
for some $a,b,c,d \in \RR$. We have to show that $g$ is a (complex) constant 
multiple of $f$. Since $g \in V$ is not identically zero it is a stable 
polynomial, so that in particular 
$$
W_{j_0}[ap+bq,cp+dq] = (ad-bc)W_{j_0}[p,q] \geq 0
$$
(cf.~Theorem~\ref{th-HKO}). Since $W_{j_0}[p,q] \geq 0$ and 
$W_{j_0}[p,q]\not\equiv 0$ 
it follows that $ad-bc \geq 0$. 

Let $u,v\in\RR$ and note that by linearity we have 
$$g+(u+iv)f =(a+u)p+(b-v)q+i( (c+v)p+(d+u)q)    \in V.
$$ 
Arguing as above we deduce that 
$$
H(u,v):=(a+u)(d+u)-(b-v)(c+v) \geq 0 
$$
for all $u,v \in \RR$. But 
$$
4H(u,v)=(2u+a+d)^2+(2v+c-b)^2-(a-d)^2-(b+c)^2,
$$
so $H(u,v) \geq 0$ for all $u,v \in \RR$ if and only if $a=d$ and $b=-c$. 
This gives  
$$
g=ap-cq+i(cp+aq)=(a+ic)(p+iq)=(a+ic)f,
$$
as was to be shown.
\end{proof}

We can now show that the symbol of any stability preserver whose 
range is not one-dimensional must necessarily be stable. 

\begin{proof}[Proof of Necessity in Theorem~\ref{multi-finite-stab}]
Suppose that 
$T : \CC_\kappa[z_1,\ldots, z_n] \rightarrow \CC[z_1,\ldots,z_n]$  
preserves stability. 
Given $W \in \{\zeta \in \CC: \Im(\zeta)>0\}^n$  we have that 
$T[(z+W)^\kappa] \in \HH_{n}(\CC)\cup\{0\}$. Assume first that there exists   
$W \in \{\zeta \in \CC: \Im(\zeta)>0\}^n$ for which $T[(z+W)^\kappa]\equiv 0$
and let $f \in \CC_\kappa[z_1,\ldots, z_n]$. Then by Lemma \ref{plenty} 
there is an $\epsilon>0$ such that 
$(z+W)^\kappa + \epsilon f(z) \in  \HH_n(\CC)$.   
It follows that 
$$
\epsilon T[f(z)]=T[(z+W)^\kappa + \epsilon f(z)] \in \HH_n(\CC)\cup \{0\}.
$$
Hence the image of $T$ is a linear space whose non-zero elements are all 
stable polynomials. By Lemma~\ref{spaces} (ii) we have that 
$T$ is given by $T(f) =\alpha(f)P$, where $P$ is a stable polynomial and 
$\alpha$ is a linear functional. 

On the other hand, if $T[(z+W)^\kappa]\not\equiv 0$ for 
$W \in \{\zeta \in \CC: \Im(\zeta)>0\}^n$ then 
$T[(z+W)^\kappa] \in \HH_{n}(\CC)$ for all such $W$, so that 
$G_T(z,w)=T[(z+w)^\kappa] \in \HH_{2n}(\CC)$. 
\end{proof}

\section{Real Stability Preservers}\label{s-7}

The aim of this section is to prove Theorem~\ref{multi-finite-hyp}.
Let 
$$
\HH_n^{-}(\CC)= \{ f(-z): f(z) \in \HH_n(\CC)\}.
$$ 

\begin{proposition}\label{intersection}
For any $n\in\NN$ the following holds:
$$\HH_n(\CC) \cap \HH_n^{-}(\CC)= \CC \HH_n(\RR) 
:= \{ cf: c \in \CC, f \in \HH_n(\RR) \}.$$
\end{proposition}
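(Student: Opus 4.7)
The plan is to prove both inclusions, with the work concentrated in the non-trivial direction $\HH_n(\CC) \cap \HH_n^{-}(\CC) \subseteq \CC \HH_n(\RR)$. Given $h$ in the intersection, I would decompose $h = p + iq$ with $p, q \in \RR[z_1, \ldots, z_n]$ and start from the key observation: $h \in \HH_n^{-}(\CC)$ if and only if the coefficient-conjugate polynomial $p - iq$ lies in $\HH_n(\CC)$. Indeed, $p(z) - iq(z) = \overline{h(\bar z)}$, so it is non-vanishing on $H^n$ precisely when $h$ is non-vanishing on $\{\zeta \in \CC : \Im(\zeta) < 0\}^n$, which is the defining condition for $h(-z) \in \HH_n(\CC)$. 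Thus the hypothesis on $h$ becomes the symmetric statement that both $p + iq$ and $p - iq$ are stable.

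The inclusion $\CC \HH_n(\RR) \subseteq \HH_n(\CC) \cap \HH_n^{-}(\CC)$ is the easy direction: any $f \in \HH_n(\RR)$ satisfies $f(\bar z) = \overline{f(z)}$, so its zero set is invariant under componentwise conjugation, hence $f$ is non-vanishing on $\{\Im(z_i) < 0\}^n$, i.e., $f(-z) \in \HH_n(\CC)$. Scaling by any $c \in \CC$ preserves both stability properties.

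For the reverse inclusion, I would first dispose of the trivial sub-cases: if $p \equiv 0$ then $h = iq$ is stable iff $q \in \HH_n(\RR)$; symmetrically for $q \equiv 0$. Assuming $p, q$ are both non-zero, the goal is to force $q = \alpha p$ for some $\alpha \in \RR$, whence $h = (1 + i\alpha) p$ with $p \in \HH_n(\RR)$ (since $1+i\alpha\neq 0$). Suppose for contradiction that $p$ and $q$ are not constant multiples of each other. Applying Lemma~\ref{krein} to $p + iq \in \HH_n(\CC)$ gives $q \ll p$ together with $p, q \in \HH_n(\RR)$. Next, since multiplication by $i$ preserves stability, the polynomial $i(p - iq) = q + ip$ is also stable; applying Lemma~\ref{krein} to this polynomial yields $p \ll q$. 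Corollary~\ref{cor-HB} now forces $q$ to be a real scalar multiple of $p$, contradicting the assumed linear independence.

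The main delicacy is not any single computation but the careful bookkeeping of how three operations (coefficient conjugation, multiplication by $i$, and the substitution $z \mapsto -z$) interact with stability. Once the equivalence $h \in \HH_n^{-}(\CC) \Leftrightarrow p - iq \in \HH_n(\CC)$ is made explicit and the trick of multiplying by $i$ is used to turn ``$p - iq$ stable'' into ``$q + ip$ stable'', the proposition reduces cleanly to the multivariate Hermite-Biehler package of Lemma~\ref{krein} and Corollary~\ref{cor-HB} from Section~\ref{s-2}, with no further ingredients required.
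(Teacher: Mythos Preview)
Your proof is correct and follows essentially the same approach as the paper: decompose into real and imaginary parts, convert $h \in \HH_n^{-}(\CC)$ into $p - iq \in \HH_n(\CC)$ via conjugation, deduce both $q \ll p$ and $p \ll q$ (the latter through the multiply-by-$i$ trick, which the paper writes more tersely as ``$-g \ll h$, i.e., $h \ll g$''), and invoke Corollary~\ref{cor-HB}. The paper's argument is slightly more compressed---it does not spell out the easy inclusion or the trivial sub-cases $p\equiv 0$, $q\equiv 0$---but the substance is identical.
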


\begin{proof}
Let $h,g \in \RR[z_1,\ldots, z_n]$ be such that $h+ig \in \HH_n(\CC)\cap 
\HH_n^{-}(\CC)$ and set $f= h-ig$. If $x,y \in \RR^n$ then 
\begin{equation}\label{extra-eq}
\overline{f(x+iy)}= h(-(-x+iy))+ig(-(-x+iy)).
\end{equation}
Since $h+ig \in \HH_n^{-}(\CC)$, the right-hand side of 
\eqref{extra-eq} is non-vanishing whenever $y\in\RR_+^n$, which 
amounts to saying that $f(x+iy)\neq 0$ for all 
$y\in\RR_+^n$. By Definition \ref{d-pp} this implies that 
$-g \ll h$, i.e., $h \ll g$. On the other hand, $g\ll h$ since 
$h+ig \in \HH_n(\CC)$. Therefore the 
proposition is just a reformulation of Corollary~\ref{cor-HB}. 
\end{proof}
 
\begin{proof}[Proof of Theorem~\ref{multi-finite-hyp}]
If $T:\RR_\kappa[z_1,\ldots, z_n] \rightarrow \RR[z_1,\ldots, z_n]$ 
is a linear operator as in part (a) of Theorem~\ref{multi-finite-hyp} then it 
preserves real stability (by Theorem \ref{th-HKO}). Moreover, 
if $T,T' : \RR_\kappa[z_1,\ldots, z_n] \rightarrow \RR[z_1,\ldots, z_n]$
are linear operators whose symbols satisfy $G_{T'}(z,w)=G_T(z,-w)$ 
then these operators are related by 
$T'(f)(z)=(-1)^{\kappa}T(f(-z))$, hence they preserve real stability simultaneously. 
To settle the sufficiency part of the theorem we may therefore 
assume that $G_T(z,w)$ is real stable. But then the desired conclusion 
simply follows from the complex case. 

To prove the necessity part of Theorem~\ref{multi-finite-hyp} consider 
$G_T(z,w)=T[(z+w)^\kappa]$ and let $W \in \{z \in \CC: \Im(z)>0\}^n$. 
Write $(z+W)^\kappa \in \HH_n(\CC)$ as 
$$
(z+W)^\kappa = F(z)+iG(z),  \quad F,G \in \RR_\kappa[z_1,\ldots, z_n].  
$$
By the multivariate 
Hermite-Kakeya-Obreschkoff theorem (Theorem \ref{th-HKO})
we have that $\alpha F + \beta G \in \HH_n(\RR)\cup\{0\}$ for all 
$\alpha, \beta \in \RR$ and hence that  
$\alpha T(F)+ \beta T(G) \in \HH_n(\RR)\cup\{0\}$ for all 
$\alpha, \beta \in \RR$. By Theorem \ref{th-HKO} again this means that 
$$
T(F)+iT(G)=G_T(z,W) \in \HH_n(\CC)\cup \HH_n^{-}(\CC) \cup \{0\}. 
$$ 
Suppose that there are $W_1, W_2  \in \{z \in \CC: \Im(z)>0\}^n$ such that 
$G_T(z,W_1) \in \HH_n(\CC) \cup \{0\}$ and 
$G_T(z,W_2) \in \HH_n^{-}(\CC) \cup \{0\}$. By a homotopy argument we then
deduce that there exists $t\in[0,1]$ such that
$$
G_T(z,W')=\HH_n(\CC)\cap \HH_n^{-}(\CC) \cup \{0\},$$
where $W'=(1-t)W_1+tW_2 \in \{z \in \CC: \Im(z)>0\}^n$. Therefore, 
by Proposition~\ref{intersection} there is a real stable polynomial $p$ and 
$a,b \in \RR$ such that 
$G_T(z,W')=(a+bi)p(z)$. Let us write $(z+W')^\kappa= f(z)+ig(z)$, 
where $f,g$ have real coefficients. 
Then $T(f)=ap$ and $T(g)=bp$ so that 
$$
T(bf-ag)=0. 
$$
Now, as noted in the proof of Lemma~\ref{plenty} for any 
$h \in \RR_\kappa[z_1,\ldots,z_n]$ there is an $\epsilon >0$ such that 
$$
\frac {|h|}{|(z+W')^\kappa|} < \frac{1}{\epsilon}
$$
whenever $\Im(z_i) \geq 0$, $1\le i\le n$. 
Hence, if $a^2+b^2 \neq 0$ (the case $a=b=0$ follows similarly) we get 
$$
\frac {|h|}{|bf-ag+i(af+bg)|} = \frac {|h|}{|(b+ia)(z+W')^\kappa|} 
< (\epsilon|(b+ia)|)^{-1}=\frac{1}{\epsilon'}.
$$
This implies that 
$$
|bf-ag+\epsilon'h+i(af+bg)| > 0
$$
whenever $\Im(z_i) \geq 0$, $1\le i\le n$. In particular, 
$bf-ag+\epsilon'h+i(af+bg) \in \HH_n(\CC)$ so by Theorem \ref{th-HKO}
we have 
$bf-ag+\epsilon'h \in \HH_n(\RR) \cup \{0\}$. It follows that 
$$
T(h)=\frac 1 {\epsilon'}T(bf-ag+\epsilon'h) \in \HH_n(\RR)\cup\{0\}.
$$
This means that all nonzero polynomials in the image of $T$ are real stable 
and by Lemma~\ref{spaces} we conclude that the image of $T$ is of 
dimension at most two. 

Thus we may assume that $T[(z+W)^\kappa] \in \HH_n(\CC)$ for all 
$W \in \{z \in \CC: \Im(z)>0\}^n$ or 
$T[(z-W)^\kappa] \in \HH_n(\CC)$ for all $W \in \{z \in \CC: \Im(z)>0\}^n$. 
But this amounts to saying that 
$T[(z+w)^\kappa] \in \HH_{2n}(\RR)$ or $T[(z-w)^\kappa] \in \HH_{2n}(\RR)$, 
as claimed. 
\end{proof}

\section{Soft-Core/Transcendental Classifications}\label{s-8}

In this section we settle Theorems \ref{multi-infinite-stab} 
and \ref{multi-infinite-hyp}, that is, the transcendental 
characterizations of stability, respectively real stability preservers. 
Let $\KK=\CC$ or $\RR$. A linear operator $T:\KK[z_1,\ldots,z_n]\to 
\KK[z_1,\ldots,z_n]$ preserves stability if and only if the same holds for 
all its restrictions $T_\kappa:\KK_\kappa[z_1,\ldots,z_n]\to 
\KK[z_1,\ldots,z_n]$, 
$\kappa\in\NN^n$. Therefore, by Theorem \ref{multi-finite-stab}, 
Theorem \ref{multi-finite-hyp} and the definitions of the symbols
$G_{T_\kappa}(z,w)$, $\overline{G}_T(z,w)$ we see that 
Theorem \ref{multi-infinite-stab}, respectively 
Theorem \ref{multi-infinite-hyp}, would follow from the case $\KK=\CC$, 
respectively $\KK=\RR$, of the next theorem. 
As before we use the standard (product) partial order $\le$ on $\NN^n$: for  
$\alpha, \beta \in \NN^n$ one has $\alpha \leq \beta$ if 
$\alpha_i \leq \beta_i$ for all $1 \leq i \leq n$. Set 
$\alpha! = \alpha_1! \cdots \alpha_n!$ as in \S \ref{new-s-prel} and let 
$$
(\beta)_\alpha = \begin{cases} 
\frac {\beta!}{(\beta-\alpha)! } \mbox{ if } \alpha \leq \beta, \\ 
0 \mbox{ otherwise.} 
\end{cases}
$$

\begin{theorem}\label{t-trans}
Let $F(z,w)= \sum_{\alpha \in \NN^n}P_\alpha(z)w^\alpha$, where
$z=(z_1,\ldots,z_n)$ and $w=(w_1,\ldots,w_n)$, be a formal power 
series in 
$w$ with coefficients in $\KK[z_1,\ldots, z_n]$. Then 
$F(z,w) \in \HHH_{2n}(\KK)$ if and only if 
$$\sum_{\alpha \leq \beta} (\beta)_\alpha P_\alpha(z)w^\alpha 
\in \HH_{2n}(\KK) \cup\{0\}$$ 
for all $\beta \in \NN^n$. 
\end{theorem}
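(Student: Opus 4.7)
Writing $F_\beta(z,w):=\sum_{\alpha\leq\beta}(\beta)_\alpha P_\alpha(z)w^\alpha$, the plan is to prove both directions by approximation combined with Hurwitz' Theorem~\ref{mult-hur}. The linchpin is the identity
\[
F_\beta(z,w)\;=\;\prod_{i=1}^{n}\bigl(1+w_i\partial_{v_i}\bigr)^{\beta_i}\,F(z,v)\bigg|_{v=0},
\]
obtained by multinomial expansion of the product together with the relation $\partial_v^\alpha F(z,0)=\alpha!\,P_\alpha(z)$ and the identity $\alpha!\binom{\beta}{\alpha}=(\beta)_\alpha$.

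For the necessity direction I will first show that for every $G\in\HH_{2n}(\KK)$ in variables $(z,v)$ and every fresh variable $w'$, the polynomial $G+w'\partial_{v_i}G$ lies in $\HH_{2n+1}(\KK)\cup\{0\}$. Treating the other variables as parameters ranging over $H^{2n-1}$ and invoking univariate Hermite--Biehler in the $v_i$-direction gives $\Im(G/\partial_{v_i}G)\geq0$ on $H$, whence by Lemma~\ref{krein}(4)$\Leftrightarrow$(3) and Remark~\ref{r-krein} the polynomial $G+w'\partial_{v_i}G$ is indeed stable. Identifying $w'$ with a preexisting variable $w_i$ via Lemma~\ref{l-close}(4) allows iteration: successive application of the operators $(1+w_i\partial_{v_i})^{\beta_i}$ for $i=1,\ldots,n$ shows that $\prod_i(1+w_i\partial_{v_i})^{\beta_i}F$ is stable in the $3n$ variables $(z,v,w)$ whenever $F\in\HH_{2n}(\KK)$; specializing $v=0$ via Lemma~\ref{l-close}(1) then gives $F_\beta\in\HH_{2n}(\KK)\cup\{0\}$ at the level of stable polynomials. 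For a general $F\in\HHH_{2n}(\KK)$ I pick stable polynomials $F_k$ converging to $F$ locally uniformly; Cauchy's integral formula in the $w$-variables yields $[w^\alpha]F_k\to P_\alpha$ uniformly on $z$-compacts, so the finite sum $L_\beta(F_k):=\sum_{\alpha\leq\beta}(\beta)_\alpha[w^\alpha]F_k\cdot w^\alpha$ tends to $F_\beta$ uniformly on compacts, and Hurwitz' Theorem~\ref{mult-hur} places $F_\beta$ in $\HH_{2n}(\KK)\cup\{0\}$.

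For the sufficiency direction, set $\beta(N)=(N,\ldots,N)$ and consider the rescaled polynomial
\[
G_N(z,w):=F_{\beta(N)}(z,w/N)=\sum_{\alpha\leq\beta(N)}\frac{(\beta(N))_\alpha}{N^{|\alpha|}}\,P_\alpha(z)\,w^\alpha.
\]
The hypothesis gives $F_{\beta(N)}\in\HH_{2n}(\KK)\cup\{0\}$, and by Lemma~\ref{l-close}(2) the positive coordinatewise rescaling $w\mapsto w/N$ keeps $G_N\in\HH_{2n}(\KK)\cup\{0\}$. Since $(\beta(N))_\alpha/N^{|\alpha|}=\prod_{j=1}^{n}\prod_{\ell=0}^{\alpha_j-1}(1-\ell/N)\to1$ as $N\to\infty$ for each fixed $\alpha$, the $G_N$ converge to $F$ coefficient by coefficient. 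To upgrade this to uniform convergence on compacts of $\CC^{2n}$ and thereby ensure that the formal series $F$ actually defines an entire function, one needs coefficient estimates of the form $|P_\alpha(z)|\leq M(z)\,C(z)^{|\alpha|}/\alpha!$ on each compact, independent of $\alpha$ and of $N$. These are the multivariate Sz\'asz-type inequalities announced in the introduction, extracted from the stability of the polynomials $F_{\beta(N)}$ for every $N$ by bounding all of their coefficients in terms of only the lowest few. Once such bounds are in hand, standard $\epsilon$-$M$ splitting controls both the terms with $|\alpha|$ small (via $(\beta(N))_\alpha/N^{|\alpha|}\to1$) and the tail terms (via geometric decay), giving $G_N\to F$ uniformly on compacts and hence $F\in\HHH_{2n}(\KK)$ by the definition of the closure.

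The hard part is this multivariate Sz\'asz estimate underlying the sufficiency direction: one must extract size bounds on $P_\alpha$ that are independent of the truncation level $\beta$ and strong enough to imply both entirety of $F$ and uniform control of the tails of $G_N$. The necessity direction, by contrast, reduces through the key identity above to a routine iteration of the Hermite--Biehler/proper-position machinery followed by an application of Hurwitz' theorem.
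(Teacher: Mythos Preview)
Your proof is correct, and for the sufficiency direction it follows the paper's line essentially verbatim: rescale the truncations $F_{\beta(N)}$ by $w\mapsto w/N$, observe coefficientwise convergence since $J(\alpha,\beta(N))\to1$, and appeal to the multivariate Sz\'asz bounds (Theorem~\ref{for_trans}) to produce a normal family and hence uniform convergence on compacts. The paper phrases the Sz\'asz step as a direct bound $|F_m(z,w)|\le Be^{Cr^2}$ rather than as coefficient bounds on the individual $P_\alpha$, but this is a cosmetic difference.

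Your necessity argument, however, takes a genuinely different route from the paper's. The paper encodes the map $F\mapsto F_\beta$ as the action of the \emph{Jensen multiplier} operator $\bar{\Lambda}:z^\gamma w^\alpha\mapsto(\beta)_\alpha z^\gamma w^\alpha$ and proves that $\bar{\Lambda}$ preserves stability by invoking the already-established symbol characterization (Theorem~\ref{multi-finite-stab}) through Lemma~\ref{multi}. You instead express $F_\beta$ via the differential identity $F_\beta(z,w)=\prod_i(1+w_i\partial_{v_i})^{\beta_i}F(z,v)\big|_{v=0}$ and show directly, by the Hermite--Biehler/proper-position argument of Lemma~\ref{krein}, that each factor $(1+w'\partial_{v_i})$ preserves stability when $w'$ is a fresh variable, then diagonalize. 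This avoids the dependence on the main classification theorem and is in that sense more self-contained; on the other hand, the paper's route packages the whole operation as a single multiplier sequence and thereby connects the argument to the classical Jensen/P\'olya--Schur framework. Both approaches rest on the same closure and Hurwitz lemmas, and both require the Cauchy-integral extraction of coefficients to pass from stable polynomial approximants to the limit $F\in\HHH_{2n}(\KK)$.
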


The proof of Theorem \ref{t-trans} requires several new ingredients and
additional results that we proceed to describe. Since the arguments are the 
same for $\KK=\RR$ and $\KK=\CC$, we will only focus on the latter case.

{\allowdisplaybreaks 
\subsection{Generalized Jensen Multipliers}\label{ss-82}

For $\alpha, \beta \in \NN^n$ let $J(\alpha, \beta)= (\beta)_\alpha 
\beta^{-\alpha}$ (we use the convention that $k^{\pm k}=1$ for $k=0$). 
For $n=1$ these are the so-called Jensen multipliers that are known
to preserve (univariate) stability \cite{CC1,Le,RS}. For 
each fixed $\beta\in\NN^n$ the sequences 
$\{J(\alpha,\beta)\}_{\alpha\le \beta}$ and 
$\{(\beta)_\alpha\}_{\alpha\le \beta}$ may be called multivariate (or 
generalized) Jensen multipliers in view of the following lemma.

\begin{lemma}\label{multi}
Let $\beta \in \NN^n$. The linear operators on $\CC[z_1,\ldots,z_n]$
defined by 
\begin{eqnarray*}
z^\alpha &\mapsto& J(\alpha, \beta)z^\alpha,\quad \alpha\in\NN^n, \\
z^\alpha &\mapsto& (\beta)_\alpha z^\alpha, \quad \alpha\in\NN^n,
\end{eqnarray*}
preserve stability. 
\end{lemma}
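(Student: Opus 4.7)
The plan is to reduce the lemma to a univariate real-rootedness question via the tensor structure of the multipliers and the symbol criterion, and then to invoke the classical P\'olya-Schur theorem on multiplier sequences of the first kind.

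The operator $S_\beta:z^\alpha\mapsto(\beta)_\alpha z^\alpha$ factors as $S_\beta=S^{(1)}_{\beta_1}\otimes\cdots\otimes S^{(n)}_{\beta_n}$, where $S^{(i)}_{\beta_i}$ acts on the $i$-th variable alone via $z_i^k\mapsto(\beta_i)_k z_i^k$. A linear operator on $\CC[z_1,\ldots,z_n]$ preserves stability iff each of its restrictions to $\CC_\kappa[z_1,\ldots,z_n]$ does, so by the sufficiency half of Theorem~\ref{multi-finite-stab} I only need to verify, for every $\kappa\in\NN^n$, that the symbol
\begin{equation*}
G_{S_\beta}(z,w)=S_\beta\bigl[(z+w)^\kappa\bigr]
=\prod_{i=1}^n\sum_{k=0}^{\min(\beta_i,\kappa_i)}\binom{\kappa_i}{k}(\beta_i)_k z_i^k w_i^{\kappa_i-k}
\end{equation*}
lies in $\HH_{2n}(\CC)$. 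Since the factors depend on disjoint pairs of variables, the product is stable as soon as each factor is, and the problem reduces to showing that $G^\kappa_\beta(z,w):=\sum_{k=0}^{\min(\beta,\kappa)}\binom{\kappa}{k}(\beta)_k z^k w^{\kappa-k}$ is a stable bivariate polynomial for every $\beta,\kappa\in\NN$.

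Being homogeneous of degree $\kappa$, one may write $G^\kappa_\beta(z,w)=w^\kappa P(z/w)$ with $P(\zeta)=\sum_k\binom{\kappa}{k}(\beta)_k\zeta^k$; and since the image of $\{(z,w)\in\CC^2:\Im(z)>0,\,\Im(w)>0\}$ under $(z,w)\mapsto z/w$ is the slit plane $\CC\setminus(-\infty,0]$, stability of $G^\kappa_\beta$ is equivalent to every zero of $P$ lying in $\RR_{\leq 0}$. Using the symmetry $\binom{\kappa}{k}(\beta)_k=\binom{\beta}{k}(\kappa)_k$, I rewrite $P(\zeta)=\sum_k\binom{\beta}{k}(\kappa)_k\zeta^k = T_\kappa\bigl[(1+\zeta)^\beta\bigr]$, where $T_\kappa$ is the diagonal operator with multiplier $\{(\kappa)_k\}$. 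Because $\sum_k(\kappa)_k z^k/k!=(1+z)^\kappa$ has all zeros in $\RR_{\leq 0}$, the sequence $\{(\kappa)_k\}$ is a classical P\'olya-Schur multiplier sequence of the first kind (see \cite{PS,Le,RS}), and therefore sends real polynomials with only real non-positive zeros to polynomials of the same kind. Applying it to $(1+\zeta)^\beta$ yields the required conclusion for $P$.

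Finally, the Jensen case reduces to the one just settled: for $\beta_i\geq 1$ the identity $J(\alpha,\beta)=(\beta)_\alpha\beta^{-\alpha}$ gives $\sum_\alpha J(\alpha,\beta)a_\alpha z^\alpha=S_\beta\bigl(f(z_1/\beta_1,\ldots,z_n/\beta_n)\bigr)$ for $f=\sum_\alpha a_\alpha z^\alpha$, and positive rescaling of variables preserves stability by Lemma~\ref{l-close}~(2); coordinates with $\beta_i=0$ annihilate the dependence on $z_i$ and are handled beforehand by substituting $z_i=0$ (Lemma~\ref{l-close}~(1)). The only delicate step is the real-rootedness of $P(\zeta)$; I dispose of it by citing P\'olya-Schur, though one could alternatively argue by induction on $\kappa$ using the recursion $G^{\kappa+1}_\beta=(w+\beta z)G^\kappa_\beta-z^2\partial_z G^\kappa_\beta$ combined with the Lieb-Sokal Lemma~\ref{Li-So}.
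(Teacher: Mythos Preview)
Your proof is correct and follows essentially the same architecture as the paper's: reduce via the symbol criterion (Theorem~\ref{multi-finite-stab}), factor the symbol into univariate pieces, and show that the resulting univariate polynomial $P(\zeta)=\sum_k k!\binom{\kappa}{k}\binom{\beta}{k}\zeta^k$ has only non-positive real zeros. The only divergence is in this last step: the paper observes that $t^{-m}g(t)=\big[(1+d/dt)^n t^m\big]\big|_{t\to t^{-1}}$ and checks directly that the symbol of $(1+d/dt)^n$ is stable, keeping the argument self-contained within the framework of Theorems~\ref{multi-finite-stab}--\ref{multi-finite-hyp}; you instead invoke the classical P\'olya--Schur theorem for the multiplier sequence $\{(\kappa)_k\}$, which is perfectly legitimate and arguably more transparent, at the cost of an external reference.
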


\begin{proof}
Fix $\beta \in \NN^n$. Since the first operator is a composition of the second
operator and a rescaling of the variables, it is enough to prove the lemma 
only for the 
second operator -- call it $T$ and denote by $T_\kappa$ its
restriction to $\CC_\kappa[z_1,\ldots,z_n]$, where $\kappa \in \NN^n$. 
By Theorem~\ref{multi-finite-stab} we need to show that 
$$
G_{T_\kappa}(z,w)=T_\kappa[(z+w)^\kappa] 
= \sum_{\alpha \leq \kappa} \binom \kappa \alpha (\beta)_\alpha 
z^\alpha w^{\kappa-\alpha} 
$$
is stable for all $\kappa \in \NN^n$. However, 
$$
G_{T_\kappa}(z,w)= \prod_{i=1}^n\left[\sum_{j=0}^{\kappa_i} j! 
\binom {\kappa_i} {j} \binom {\beta_i} {j} z_i^j w_i^{\kappa_i-j}\right],
$$
so the lemma amounts to showing that for any $m,n \in \NN$
the univariate polynomial 
$$g(t)=\sum_{j=0}^{n} j! \binom {n} j \binom {m} j t^j$$ 
is real-rooted (then necessarily with all negative roots). 
To prove this note that 
$$t^{-m}g(t)=S\!\left[t^{m}\right]\Big|_{t\to t^{-1}},$$
where $S$ is the operator on $\CC[t]$ given by 
$S=\left(1+\frac{d}{dt}\right)^n$. 
To prove the well known fact that $S$ preserves stability it suffices to 
consider the case $n=1$. The symbol of $S$ is $S[(t+w)^m]=(m+t+w)(t+w)^{m-1}$,
which is a stable polynomial. The desired conclusion now follows  
from Theorem~\ref{multi-finite-hyp}.
\end{proof}

\subsection{Multivariate Sz\'asz Principles}\label{ss-81}

A remarkable fact about univariate stable polynomials, first noted by
Sz\'asz \cite{szasz} and subsequently used by Edrei \cite{edrei}, is
that the growth of such a polynomial is controlled by its first few 
non-vanishing coefficients. We extend Sz\'asz' principle
to higher dimensions and establish estimates for the growth of 
multivariate stable polynomials.

\begin{lemma}[Sz\'asz]\label{l-sz}Suppose that 
$f(z)= 1+\sum_{i=1}^{k}a_iz^i=\prod_{j=1}^k (1+\xi_j z)
$
is stable. 
Then 
$$
\sum_{j=1}^k|\xi_j|^2 \leq 3|a_1|^2+2|a_2|. 
$$
\end{lemma}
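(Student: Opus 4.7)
My plan is a direct calculation exploiting two ingredients: Vieta's relations between the coefficients $a_1,a_2$ and the power sum $\sum \xi_j^2$, together with the geometric constraint that stability imposes on the $\xi_j$'s.

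\textbf{Step 1 (Location of $\xi_j$).} Since $f(z)=\prod_{j=1}^k(1+\xi_j z)$ is stable, its zeros $-1/\xi_j$ must lie in the closed lower half-plane. Equivalently, $\Im(\xi_j)\le 0$ for each $j$. Write $\xi_j=x_j-i\eta_j$ with $x_j\in\RR$ and $\eta_j\ge 0$, so that $|\xi_j|^2=x_j^2+\eta_j^2$.

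\textbf{Step 2 (Newton's identity).} By Vieta's formulas, $a_1=e_1(\xi)=\sum_j \xi_j$ and $a_2=e_2(\xi)=\sum_{i<j}\xi_i\xi_j$. Hence the power sum identity $p_2=e_1^2-2e_2$ gives
$$\sum_{j=1}^k \xi_j^2 = a_1^2-2a_2.$$
Taking real parts yields $\sum_j x_j^2-\sum_j \eta_j^2 = \Re(a_1^2-2a_2)\le |a_1|^2+2|a_2|$.

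\textbf{Step 3 (Controlling the imaginary part).} Since all $\eta_j\ge 0$, I have $\sum_j \eta_j = -\Im(a_1)$, and consequently $\bigl(\sum_j\eta_j\bigr)^2=(\Im a_1)^2\le |a_1|^2$. By the trivial comparison $\sum_j\eta_j^2\le\bigl(\sum_j\eta_j\bigr)^2$ (valid because each $\eta_j\ge 0$), I obtain
$$\sum_{j=1}^k \eta_j^2\le |a_1|^2.$$
This is the step where stability really matters: without the sign condition $\eta_j\ge 0$ the bound $\sum\eta_j^2\le(\sum\eta_j)^2$ fails.

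\textbf{Step 4 (Combining).} Adding Steps 2 and 3 appropriately,
$$\sum_{j=1}^k|\xi_j|^2 = \sum_j x_j^2+\sum_j\eta_j^2 = \left(\sum_j x_j^2-\sum_j \eta_j^2\right)+2\sum_j \eta_j^2 \le \bigl(|a_1|^2+2|a_2|\bigr)+2|a_1|^2 = 3|a_1|^2+2|a_2|,$$
which is the desired estimate.

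There is essentially no obstacle here: the statement reduces to Vieta plus the one genuinely analytic input, namely the half-plane location of the $\xi_j$ forced by stability, which turns the $\ell^2$-norm bound $\sum\eta_j^2\le(\sum|\eta_j|)^2$ into a bound in terms of $|\Im a_1|\le|a_1|$.
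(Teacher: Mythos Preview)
Your proof is correct and follows essentially the same approach as the paper: both use Newton's identity $\sum_j\xi_j^2=a_1^2-2a_2$, the decomposition $|\xi_j|^2=\Re(\xi_j^2)+2\Im(\xi_j)^2$, and the key observation that $\sum_j\Im(\xi_j)^2\le(\sum_j\Im(\xi_j))^2=\Im(a_1)^2$ since all $\Im(\xi_j)$ have the same sign by stability. The only cosmetic difference is that you write out real and imaginary parts explicitly as $x_j$ and $-\eta_j$.
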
 

\begin{proof}
By assumption one has $\Im(\xi_j)\le 0$, $1\le j\le k$, hence
$$
\sum_{j=1}^{k}\Im(\xi_j)^2\le \left(\sum_{j=1}^{k}\Im(\xi_j)\right)^2
=\Im(a_1)^2.
$$
Since $\sum_{j=1}^{k}\xi_j^2=a_1^2-2a_2$ it follows that
\begin{equation*}
\begin{split}
\sum_{j=1}^k|\xi_j|^2&=\sum_{j=1}^k\left(\Re(\xi_j)^2-\Im(\xi_j)^2\right)
+2\sum_{j=1}^k\Im(\xi_j)^2\\
&=\Re\left(\sum_{j=1}^{k}\xi_j^2\right)+2\sum_{j=1}^k\Im(\xi_j)^2\\
&=\Re(a_1^2-2a_2)+2\sum_{j=1}^k\Im(\xi_j)^2\\
&\le \Re(a_1^2-2a_2)+2\Im(a_1)^2\le 3|a_1|^2+2|a_2|,
\end{split}
\end{equation*}
as claimed.
\end{proof}

Sz\'asz used his lemma to obtain the following bound for the polynomial $f$ 
in Lemma~\ref{l-sz}: 
$$
|f(z)| \leq \exp \Big( r|a_1|+ 3r^2 |a_1|^2+3r^2|a_2|  \Big), \quad |z|\leq r. 
$$

\begin{remark}
If the univariate polynomial $f$ in Lemma \ref{l-sz} is assumed to have all 
real and negative zeros then $\sum_{j=1}^k \xi_j^2 \leq a_1^2$.
Using the latter inequality in the following arguments one can get
stronger bounds for the growth of multivariate real stable polynomials with
all non-negative coefficients.
\end{remark}

As before, we let $\{e_i\}_{i=1}^{n}$ be the standard basis in $\RR^n$.

\begin{lemma}\label{edreibound}
Suppose that the polynomial 
$$
f(z)=1+ \sum_{|\beta|>0}a(\beta)z^\beta 
\in\CC[z_1,\ldots, z_n] 
$$
is stable and let
$$
A=\left[3\left(\sum_{i=1}^n |a(e_i)|\right)^2
+2\sum_{i,j=1}^n |a(e_i+e_j)|\right]^{1/2}.
$$
Then 
$$
|a(\beta)| \leq |\beta|^{-|\beta|/2} \frac{\beta^\beta}{\beta!} A^{|\beta|},
\quad \beta=(\beta_1,\ldots,\beta_n) \in\NN^n,
$$
where $|\beta|=\sum_{i=1}^{n}\beta_i$.
\end{lemma}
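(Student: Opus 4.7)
The plan is to combine three ingredients already established in the paper: the Jensen multipliers (Lemma~\ref{multi}), the polarization procedure of \S\ref{ss-61} with the Grace--Walsh--Szeg\"o theorem (Proposition~\ref{gws}), and the univariate Sz\'asz inequality (Lemma~\ref{l-sz}) supplemented by AM--GM. The overall idea is to preprocess $f$ so that the target coefficient $a(\beta)$ becomes, after a suitable rescaling, the leading coefficient of a well-controlled univariate stable polynomial.

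First, I would invoke Lemma~\ref{multi} with the multiplier $J(\alpha,\beta)=(\beta)_\alpha \beta^{-\alpha}$ (with the $k^{-k}=1$ convention). The operator $z^\alpha\mapsto J(\alpha,\beta)z^\alpha$ preserves stability, so $h:=\sum_{\alpha\le\beta}J(\alpha,\beta)a(\alpha)z^\alpha$ is stable and lies in $\CC_\beta[z_1,\ldots,z_n]$. Second, I would polarize to $H:=\Pi_\beta^\uparrow h\in\CM^\beta$ (multi-affine in $|\beta|$ variables $z_{ij}$), which is stable by Proposition~\ref{gws}. Third, setting $z_{ij}=t$ for all $(i,j)$ yields $\Phi(t):=H|_{z_{ij}=t}=h(t,\ldots,t)$, which by Lemma~\ref{l-obv} is a univariate stable polynomial, namely
$$
\Phi(t)=\sum_{\alpha\le\beta}J(\alpha,\beta)a(\alpha)\,t^{|\alpha|}=\sum_{m=0}^{|\beta|}d_m t^m.
$$

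The crux is the exact form of the coefficients $d_m$, which is what motivated the rescaling by $\beta^{-\alpha}$: direct computation gives $d_0=1$, $d_1=\sum_i a(e_i)$ (the factors $\beta_i$ cancel), $d_2=\sum_i\tfrac{\beta_i-1}{\beta_i}a(2e_i)+\sum_{i<j}a(e_i+e_j)$, and the top coefficient $d_{|\beta|}=J(\beta,\beta)a(\beta)=(\beta!/\beta^\beta)\,a(\beta)$. The bound $|a(\beta)|=0$ is trivial when $d_{|\beta|}=0$; otherwise $\deg\Phi=|\beta|$ and Sz\'asz' Lemma~\ref{l-sz} writes $\Phi(t)=\prod_{j=1}^{|\beta|}(1+\xi_j t)$ with
$$
\sum_{j=1}^{|\beta|}|\xi_j|^2 \le 3|d_1|^2+2|d_2|.
$$
Since $(\beta_i-1)/\beta_i\le 1$, the triangle inequality gives $3|d_1|^2+2|d_2|\le 3(\sum_i|a(e_i)|)^2+2\sum_i|a(2e_i)|+2\sum_{i<j}|a(e_i+e_j)|\le A^2$.

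Finally I would apply AM--GM to the leading coefficient: $|d_{|\beta|}|=\prod_j|\xi_j|\le\bigl(|\beta|^{-1}\sum_j|\xi_j|^2\bigr)^{|\beta|/2}\le A^{|\beta|}|\beta|^{-|\beta|/2}$, and multiply through by $\beta^\beta/\beta!$ to obtain the claimed inequality. The main obstacle is really just step one: one must recognize that the precise scaling $(\beta)_\alpha\beta^{-\alpha}$ (rather than $(\beta)_\alpha$) is what both preserves stability and exactly cancels the spurious $\beta_i$-factors in $d_1$ so that the resulting bound involves only $A^2$; without this choice, the degree-one and degree-two sums on the diagonal inherit weights $\beta_i,\beta_i\beta_j$ that cannot be absorbed.
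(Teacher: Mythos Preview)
Your proof is correct and follows essentially the same route as the paper: apply the Jensen multipliers $J(\alpha,\beta)$, diagonalize to a univariate stable polynomial, read off the low-order and top coefficients, and finish with Sz\'asz' inequality plus AM--GM. The polarization step you insert is an unnecessary detour---since you yourself observe $\Phi(t)=h(t,\ldots,t)$, the paper simply sets all $z_i=t$ directly (via Lemma~\ref{l-obv} or Lemma~\ref{l-close}(4)) without passing through $\Pi_\beta^\uparrow$.
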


\begin{proof}
Let us rewrite the above polynomial as $f(z)=\sum_{\alpha}a(\alpha)z^\alpha$ 
and suppose that $\beta\neq 0$ is such that $a(\beta) \neq 0$. Since the 
operator $z^\alpha \mapsto J(\alpha, \beta)z^\alpha$, $\alpha\in\NN^n$, 
preserves stability (Lemma~\ref{multi}) the polynomial 
$$
J(\beta,\beta)a(\beta)z^\beta 
+ \sum_{0<|\alpha|<|\beta|} J(\alpha, \beta)a(\alpha)z^\alpha+1
$$
is stable. Setting all variables equal to $t$ we deduce that the univariate 
polynomial $g(t)$ of degree $k=|\beta|$ given by 
\begin{multline*}
g(t)=\frac{\beta!}{\beta^\beta}a(\beta)t^{k} 
+ \sum_{2<|\alpha|<k}J(\alpha,\beta)a(\alpha)t^{|\alpha|}\\
+\left[\sum_{i\neq j\,:\,\beta_i\beta_j \ge 1} a(e_i+e_j)
+ \sum_{i\,:\,\beta_i \geq 2} (1-1/\beta_i)a(2e_i)\right]t^2
+\sum_{i\,:\,\beta_i\geq 1} a(e_i)t +1 
\end{multline*}
is stable. If we rewrite $g(t)=\sum_{i=0}^{k}a_it^i=\prod_{j=1}^k (1+\xi_jt)$ 
then by Lemma \ref{l-sz} we get
$$
\left|\frac {\beta!}{\beta^\beta}a(\beta)\right|=\prod_{j=1}^{k}|\xi_j|
\le \left[\frac{\sum_{j=1}^{k} |\xi_j|^2}{k}\right]^{k/2}
\leq \left(3|a_1|^2+2|a_2|\right)^{k/2}  k^{-k/2} \leq A^k k^{-k/2},
$$
which is the desired estimate.
\end{proof}}

\begin{theorem}\label{multi-szasz}
Suppose that the polynomial 
$$
f(z)=1+ \sum_{|\beta|>0}a(\beta)z^\beta \in \CC[z_1,\ldots, z_n] 
$$
is stable and let
\begin{equation*}
\begin{split}
&B=2^{n-1} \frac {\sqrt{2e^2-e}} {e-1} = 2^{n-1} \cdot2.0210\ldots , \\
&C=6e^2\left(\sum_{i=1}^n |a(e_i)|\right)^2+4e^2\sum_{i,j=1}^n |a(e_i+e_j)|.
\end{split}
\end{equation*}
Then 
$$
\max \Big\{ |f(z)| : |z_i| \leq r, 1\leq i \leq n\Big\} \leq B
\displaystyle{e^{Cr^2}},\quad r\ge 0.
$$
\end{theorem}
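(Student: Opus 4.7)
The plan is to combine the pointwise coefficient bound of Lemma~\ref{edreibound} with a generating function identity (to isolate the dimension $n$) and a Cauchy--Schwarz estimate. For $|z_i| \le r$, the triangle inequality gives $|f(z)| \le \sum_{\beta \in \NN^n} |a(\beta)|\, r^{|\beta|}$, and Lemma~\ref{edreibound} yields $|a(\beta)| \le |\beta|^{-|\beta|/2}\,(\beta^\beta/\beta!)\,A^{|\beta|}$, where $A$ is the constant from that lemma, so that $C = 2e^2 A^2$.

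To decouple the dimension I would exploit the factorization
\[
\sum_{\beta \in \NN^n} \frac{\beta^\beta}{\beta!}\, x^{|\beta|} = T(x)^n, \qquad T(x) := \sum_{j \ge 0} \frac{j^j}{j!}\, x^j.
\]
Stirling's inequality $j^j/j! \le e^j$ gives the coefficient-wise bound $T(x) \le (1-ex)^{-1}$, and combining this with the elementary estimate $\binom{k+n-1}{n-1} \le 2^{k+n-1}$ yields
\[
\sum_{|\beta|=k} \frac{\beta^\beta}{\beta!} \le \binom{k+n-1}{n-1}\, e^k \le 2^{n-1}(2e)^k.
\]
Setting $w := 2eAr$ and $\Phi(w) := \sum_{k \ge 0} w^k/k^{k/2}$ (with the convention $0^0 := 1$), grouping the original sum by $k = |\beta|$ then gives $|f(z)| \le 2^{n-1}\Phi(w)$. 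Since $w^2/2 = 2e^2 A^2 r^2 = Cr^2$, the proof reduces to showing $\Phi(w) \le \frac{\sqrt{2e^2-e}}{e-1}\, e^{w^2/2}$.

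For this I would apply Cauchy--Schwarz with weights $\sqrt{k!}$:
\[
\Phi(w) = \sum_k \frac{w^k}{\sqrt{k!}} \cdot \frac{\sqrt{k!}}{k^{k/2}} \le \Bigl(\sum_k \frac{w^{2k}}{k!}\Bigr)^{1/2}\Bigl(\sum_k \frac{k!}{k^k}\Bigr)^{1/2} = e^{w^2/2}\sqrt{D},
\]
and then bound $D := \sum_{k \ge 0} k!/k^k$. A short monotonicity argument based on $(1+1/k)^k \le e$ shows $k!/k^k \le k\, e^{1-k}$ for $k \ge 1$, whence
\[
D \le 1 + e\sum_{k \ge 1}\frac{k}{e^k} = 1 + \frac{e^2}{(e-1)^2} = \frac{2e^2-2e+1}{(e-1)^2} \le \frac{2e^2-e}{(e-1)^2}.
\]
The main obstacle is arranging the estimates so that $n$ appears only in the prefactor $B = 2^{n-1}\sqrt{2e^2-e}/(e-1)$ and not in the exponent $C$; the factorization $T(x)^n$, combined with the crude binomial bound, is exactly what achieves this separation, since it lifts the one-dimensional coefficient-wise bound $T(x) \le (1-ex)^{-1}$ to the $n$-variable sum uniformly in $k$.
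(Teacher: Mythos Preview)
Your proof is correct and follows essentially the same approach as the paper: apply Lemma~\ref{edreibound}, group by $|\beta|=k$, bound $\sum_{|\beta|=k}\beta^\beta/\beta!$ by $\binom{n+k-1}{k}e^k\le 2^{n-1}(2e)^k$, and then estimate $\sum_k k^{-k/2}(2eAr)^k$ via Cauchy--Schwarz with weights $\sqrt{k!}$. The generating-function framing $T(x)^n$ is a nice way to say ``coefficient-wise $j^j/j!\le e^j$ and then expand $(1-ex)^{-n}$'', but operationally it is the paper's argument; your Stirling variant $k!/k^k\le k\,e^{1-k}$ is slightly sharper than the paper's $k!/k^k\le(ek+1)e^{-k}$ (and its inductive proof actually hinges on $(1+1/k)^{k+1}\ge e$ rather than $(1+1/k)^k\le e$), but you correctly relax the resulting $D\le(2e^2-2e+1)/(e-1)^2$ to $(2e^2-e)/(e-1)^2$ to match the stated constant $B$.
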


\begin{proof}
We make use of the following simple inequalities that may be proved by 
induction or from Stirling estimates.  

\begin{equation}\label{stirling}
e^{-n} \leq \frac {n!}{n^n} \leq (en+1)e^{-n}, \quad n \geq 0.
\end{equation}
Let 
$
d(n,k)= \sum_{\beta \in \NN^n, |\beta|=k} \beta^\beta/\beta!
$
and note that $\left|\{\beta \in \NN^n : |\beta|=k\}\right|
=\binom {n+k-1}{k}$ for $n,k\in \NN$. 
 By \eqref{stirling} we have the following (rough) estimate:
$$
d(n,k) \leq \binom {n+k-1}k e^k \leq 2^{n-1+k}e^k,\quad n,k\in \NN.
$$ 
Let $A$ be as in Lemma~\ref{edreibound} and apply the same lemma:
\begin{equation}\label{bound-A}
\begin{split}
&\max\Big\{ |f(z)| : |z_i|\leq r, 1\leq i \leq n\Big\} \leq 
\sum_{\beta\in\NN^n} |\beta|^{-|\beta|/2} 
\frac{\beta^\beta}{\beta!} (Ar)^{|\beta|} \\
&= \sum_{k=0}^{\infty} d(n,k)k^{-k/2}(Ar)^k 
\leq 2^{n-1} \sum_{k=0}^{\infty} k^{-k/2}(2eAr)^k.
\end{split}
\end{equation}              
Set $R=2eAr$. 
By \eqref{stirling} and the Cauchy-Schwarz inequality  we get 
\begin{multline*}
\sum_{k=0}^{\infty} k^{-k/2} R^k \leq\sum_{k=0}^\infty \sqrt{(ek+1)e^{-k}}  
\cdot \sqrt{\frac{R^{2k}}{k!}} \\
\le \sqrt{\left(\sum_{k=0}^\infty (ek+1)e^{-k}\right)   \cdot   
\left(\sum_{k=0}^\infty \frac{R^{2k}}{k!}\right)   } 
= \frac {\sqrt{2e^2-e}} {e-1} \cdot e^{R^2/2},
\end{multline*}
which combined with \eqref{bound-A} proves the theorem.
\end{proof}
 
Given $f(z)=\sum_{\alpha}a(\alpha)z^{\alpha}\in 
\CC[z_1,\ldots, z_n]$ let 
$\supp(f)=\{\alpha\in \NN^n : a(\alpha)\neq 0\}$ be its support.  Denote by
$\MM(f)$ the set of minimal elements of $\supp(f)$ with respect to the partial 
order $\le$ on $\NN^n$. Moreover, for a fixed set $\MM \subset \NN^n$ we let 
$$
\MM_\ell= \{ \alpha +\beta  : \alpha \in \MM, \beta \in \NN^n, 
|\beta| \leq \ell\}. 
$$ 
We can now establish a multivariate Sz\'asz principle.

\begin{theorem}\label{for_trans}
Let $\MM \subset \NN^n$ be a finite non-empty set and 
$f(z)=\sum_{\alpha}a(\alpha)z^\alpha\in 
\CC[z_1,\ldots, z_n]$ be a stable polynomial with $\MM(f)=\MM$. 
Then there are constants $B$ and $C$ depending  only on   
the coefficients 
$a(\alpha)$ with $\alpha \in \MM_2$  
such that 
$$
\max \Big\{ |f(z)| : |z_i| \leq r, 1\leq i \leq n\Big\} \leq B e^{Cr^2}
$$ 
for all $r\ge 0$. Moreover, $B$ and $C$ can be chosen so that they depend  
continuously on the aforementioned set of coefficients.  
\end{theorem}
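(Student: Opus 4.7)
The idea is to reduce Theorem~\ref{for_trans} to the normalized case handled by Theorem~\ref{multi-szasz} by ``centering'' the polynomial at each minimum of its support via partial differentiation. For each $\alpha \in \MM$, consider
$$
f_\alpha(z) \;=\; \frac{1}{\alpha!\,a(\alpha)}\,\partial^\alpha f(z)
\;=\; 1 \;+\; \sum_{|\delta|\ge 1} c_\alpha(\delta)\, z^\delta,
\qquad c_\alpha(\delta) \;=\; \binom{\alpha+\delta}{\alpha}\frac{a(\alpha+\delta)}{a(\alpha)}.
$$
Partial differentiation preserves $H$-stability: if $\partial_i f$ vanished at some $w\in H^n$ then, slicing $f$ in the $z_i$-variable at $w_{\neq i}\in H^{n-1}$, we would get a univariate stable polynomial whose derivative (by Gauss--Lucas) has zeros in the closed lower half-plane, a contradiction. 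Hence each $f_\alpha$ is stable; it satisfies $f_\alpha(0)=1$, and its coefficients $c_\alpha(e_i)$ and $c_\alpha(e_i+e_j)$ are explicit rational expressions in $a(\alpha)$, $a(\alpha+e_i)$, $a(\alpha+e_i+e_j)$, all of which are indexed by elements of $\MM_2$.

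Applying Theorem~\ref{multi-szasz} to $f_\alpha$ therefore produces constants $B_\alpha,C_\alpha$, depending continuously on the coefficients $\{a(\gamma):\gamma\in \MM_2\}$, such that
$|f_\alpha(z)| \le B_\alpha e^{C_\alpha r^2}$ whenever $|z_i|\le r$. Cauchy's polydisk formula then gives
$$
|a(\alpha+\delta)|
\;\le\; \binom{\alpha+\delta}{\alpha}^{-1}|a(\alpha)|\, B_\alpha\,
\inf_{r>0}\frac{e^{C_\alpha r^2}}{r^{|\delta|}}
\;=\; \binom{\alpha+\delta}{\alpha}^{-1}|a(\alpha)|\, B_\alpha\,
\Bigl(\frac{2eC_\alpha}{|\delta|}\Bigr)^{|\delta|/2}
$$
for every $\delta\in\NN^n$.

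Since every $\beta\in\supp(f)$ satisfies $\beta\ge\alpha$ for at least one $\alpha\in\MM$, we reassemble by the (possibly overcounted but valid) bound
$$
|f(z)| \;\le\; \sum_{\beta\in\supp(f)}|a(\beta)|\,r^{|\beta|}
\;\le\; \sum_{\alpha\in\MM} r^{|\alpha|}\sum_{\delta\in\NN^n}|a(\alpha+\delta)|\,r^{|\delta|}.
$$
Inserting the coefficient bound and carrying out the inner sum by the Cauchy--Schwarz + Stirling estimate used in the proof of Theorem~\ref{multi-szasz} (noting $\bigl|\{\delta\in\NN^n:|\delta|=k\}\bigr|\le 2^{n+k-1}$) yields an inner sum of the form $B'_\alpha e^{C'_\alpha r^2}$. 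The remaining polynomial-in-$r$ prefactors $r^{|\alpha|}$ are absorbed into the exponential via an elementary estimate such as $r^k\le K_k+e^{r^2/2}$, giving the desired bound $|f(z)|\le B e^{Cr^2}$ with $B,C$ depending continuously on $\{a(\gamma):\gamma\in \MM_2\}$.

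The main technical point is this reassembly step: one must check that summation over the finite set $\MM$ together with the polynomial factors $r^{|\alpha|}$ does not disrupt the Gaussian growth shape, and that, for fixed $\MM$ and with the non-vanishing of $a(\alpha)$ ($\alpha\in\MM$) maintained, all constants produced by the argument depend continuously (indeed by elementary arithmetic operations) on the $\MM_2$-indexed coefficients. The bound on $|f(z)|$ for polydisks $|z_i|\le r$ then extends trivially to the statement of the theorem by the maximum principle.
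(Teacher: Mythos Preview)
Your argument is correct but takes a genuinely different route from the paper's. The paper proceeds by induction on $n+k$, where $k=\max\{|\alpha|:\alpha\in\MM\}$: it picks a variable $z_1$ appearing with positive exponent in some $\alpha\in\MM$ of maximal length and uses the integral identity $f(z)=f(0,z_2,\ldots,z_n)+z_1\int_0^1(\partial_1 f)(tz_1,z_2,\ldots,z_n)\,dt$ to obtain $|f|_r\le |f(0,\cdot)|_r+r\,|\partial_1 f|_r$; both terms on the right are stable (or zero) with strictly smaller $n+k$, and the base case $k=0$ is Theorem~\ref{multi-szasz}. You instead jump from each $\alpha\in\MM$ all the way down in one stroke via $f_\alpha=\partial^\alpha f/(\alpha!\,a(\alpha))$, apply Theorem~\ref{multi-szasz} once, and then climb back up through Cauchy coefficient estimates and a re-summation. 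The paper's induction is shorter on the page and never re-opens the summation inside Theorem~\ref{multi-szasz}, but tracing the resulting constants through many inductive steps would be awkward. Your approach is one-shot and makes the dependence on the $\MM_2$-coefficients completely explicit (rational in $\{a(\gamma):\gamma\in\MM_2\}$ with denominators $a(\alpha)$, $\alpha\in\MM$), at the price of essentially redoing the $\sum_k k^{-k/2}R^k$ estimate from the proof of Theorem~\ref{multi-szasz}. One cosmetic remark: rather than the slightly odd inequality $r^k\le K_k+e^{r^2/2}$, it is cleaner to use $r^k\le M_k\,e^{r^2/2}$ (boundedness of $r^k e^{-r^2/2}$), which absorbs directly into the exponential.
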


\begin{proof}
We construct $B$ and $C$ inductively over $m=n+k$, where 
$$
k=\max\{|\alpha| : \alpha \in \MM(f)\}.
$$  
If $f(0) \neq 0$ then $k=0$ and the bound follows from 
Theorem~\ref{multi-szasz}, so we may assume that $k>0$. 
Also, if $n=1$ we may just factor out a multiple of $z$ and use the fact 
that $r \leq e^{r^2}$ together with Theorem~\ref{multi-szasz}.  
Hence we may assume that $n>1$. 

Let $\alpha=(\alpha_1,\ldots,\alpha_n)\in
\MM(f)$ be such that $|\alpha|=k$ and suppose (without loss of generality)
that $\alpha_1>0$. Now 
$$
f(z_1,\ldots, z_n) = f(0,z_2,\ldots, z_n)
+ z_1\int_0^1 \frac {\partial f}{\partial z_1}(z_1t, z_2, \ldots, z_n) dt
$$
so that  
$$
|f(z_1,\ldots, z_n)|_r \leq |f(0,z_2,\ldots,z_n)|_r 
+ r \Big|\frac {\partial f}{\partial z_1}\Big|_r,
$$
where $|g|_r = \max \Big\{ |g(z)| : |z_i| \leq r,\, 1\leq i \leq n\Big\}$ for
$g\in\CC[z_1,\ldots, z_n]$. Since 
$$\max\{|\beta| : \beta\in \MM(\partial f/\partial z_1)\}= k-1$$
and $f(0,z_2,\ldots,z_n)$ is either identically zero or a stable polynomial 
in $n-1$ variables, by the induction assumption we have 
$$
|f(0,z_2,\ldots,z_n)|_r \leq B_1e^{C_1r^2} \quad \mbox{ and } \quad  
\Big|\frac {\partial f}{\partial z_1}\Big|_r \leq B_2e^{C_2r^2}.
$$
Set $C=\max\{C_1, C_2+1\}$ and $B=2\max\{B_1, B_2\}$. 
From the above estimates we get 
$|f|_r \leq  B e^{Cr^2}$, which completes the induction step. 
\end{proof}

\subsection{Proof of Theorem \ref{t-trans}}\label{ss-83}

We can now settle Theorem \ref{t-trans} and thereby -- as explained at the
beginning of this section -- Theorems \ref{multi-infinite-stab} and 
\ref{multi-infinite-hyp} as well. Note that the case $\KK=\RR$ in Theorem \ref{t-trans} follows from the case $\KK=\CC$ since $\HH_{2n}(\CC) \cap \RR[z_1,\ldots, z_n, w_1,\ldots, w_n]= \HH_n(\RR)$. Therefore we assume that $\KK=\CC$. 

Suppose that  $F(z,w)=\sum_{\alpha \in \NN^n}P_\alpha(z)w^\alpha\in 
\HHH_{2n}(\CC)$ and $P_\alpha\in\CC[z_1,\ldots,z_n]$, $\alpha\in\NN^n$. If  
$F_m(z,w) = \sum_{\alpha\in \NN^n} P_{m,\alpha}(z)w^\alpha$ is a sequence of 
polynomials in $\HH_{2n}(\CC)$ converging to $F(z,w)$, uniformly on compacts, 
then $P_{m,\alpha}(z) \rightarrow P_\alpha(z)$ as $m \rightarrow \infty$  
uniformly on compacts for fixed $\alpha \in \NN^n$. Given $\beta\in\NN^n$ 
let $\Lambda$ be the 
linear operator on $\CC[w_1,\ldots, w_n]$ defined by 
$w^\alpha \mapsto (\beta)_\alpha w^\alpha$, $\alpha\in\NN^n$, 
and extend it to a linear operator $\bar{\Lambda}$ on 
$\CC[z_1,\ldots,z_n, w_1,\ldots, w_n]$ by letting 
$\bar{\Lambda}[z^\gamma w^\alpha]=z^\gamma\Lambda[w^\alpha]$, 
$\alpha,\gamma\in\NN^n$. We claim that $\bar{\Lambda}$ 
preserves stability in $2n$ 
variables. Indeed, this amounts 
to saying that its restriction $\bar{\Lambda}_\kappa$ 
to $\CC_\kappa[z_1,\ldots,z_n, w_1,\ldots, w_n]$ preserves stability
for each $\kappa\in\NN^{2n}$. The latter statement is an immediate 
consequence of the fact
that $\Lambda$ preserves stability in $n$ variables (by Lemma \ref{multi}) 
combined with Theorem \ref{multi-finite-stab}  and a straightforward 
computation showing that the symbol $G_{\bar{\Lambda}_\kappa}$ of 
$\bar{\Lambda}_\kappa$ is a stable polynomial in $4n$ variables. 
Now, since $\bar{\Lambda}$ sends all but finitely many monomials 
$w^\alpha$ to $0$ we have that 
$\bar{\Lambda}[F_m(z,w)] \rightarrow \bar{\Lambda}[F(z,w)]$ as 
$m \rightarrow \infty$, uniformly on compacts, which gives 
$\bar{\Lambda}[F(z,w)] \in \HH_{2n}(\CC)\cup\{0\}$. This proves the necessity
part of Theorem \ref{t-trans}.

To prove the converse assume that 
$\sum_{\alpha \leq \beta}(\beta)_{\alpha}P_\alpha(z)w^\alpha\in 
\HH_{2n}(\CC)\cup\{0\}$ for all $\beta\in\NN^n$. Let $m\in\NN$, set
$\beta_m= (m,\ldots,m)$ and note that 
$$
F_m(z,w):=\sum_{\alpha \leq \beta_m} J(\alpha,\beta_m)P_\alpha(z)w^\alpha
=\sum_{\alpha \leq \beta_m}(\beta_m)_{\alpha}
P_\alpha(z)\left(\frac{w}{m}\right)^\alpha\in \HH_{2n}(\CC)\cup\{0\}.
$$
We want to show that there is a positive integer $M$ such that  
for all $r>0$ there is a constant $K_r$ (depending only on $r$) 
such that for all $m>M$ we have 
$|F_m(z,w)| \leq K_r$ whenever $|w_j| \leq r$ and $|z_j| \leq r$ for 
$1\le j\le n$. This will prove the sufficiency part of Theorem \ref{t-trans} 
since $\{F_m(z,w)\}_{m\in\NN}$ is then a normal family  
whose convergent subsequences converge to $F(z,w)$ (by the fact that 
$\lim_{m\rightarrow \infty}J(\alpha,\beta_m) = 1$ for all 
$\alpha \in \NN^n$). Now, there exists a positive integer $N$ such 
that the set $\MM(F_m)$ of minimal elements of $\supp(F_m)$ does not change 
for $m \geq N$ (this is easily seen from the fact that all lower degree terms
are multiplied by eventually non-zero numbers $J(\alpha,\beta_m)$). 
Since $\lim_{m\rightarrow \infty}J(\alpha,\beta_m) = 1$ it follows from 
Theorem~\ref{for_trans} that there is a positive integer $M$ and 
constants $B$ and $C$ such that if $|w_j| \leq r$ and $|z_j| \leq r$, 
$1\le j\le n$, then 
$$
|F_m(z,w)| \leq Be^{Cr^2},  \quad m >M,
$$
which completes the proof of the theorem.


\section{Circular Domains and Lee-Yang Polynomials}\label{s-CDLY}

We will now solve Problems \ref{prob1}--\ref{prob2} whenever $\Omega$ is a 
product of arbitrary open circular domains. Using this 
we then characterize all linear transformations preserving the class of 
Lee-Yang polynomials defined with respect to any such set $\Omega$.

\subsection{Products of Open Circular Domains}\label{OCCD}

Recall that a {\em M\"obius transformation} is a bijective 
conformal map of the extended complex plane $\widehat{\CC}$ given by 
\begin{equation}\label{mobius}
\phi(\zeta)= \frac {a\zeta+b}{c\zeta+d}, \quad a,b,c,d \in \CC, ad-bc =1. 
\end{equation}
Note that one usually has the weaker requirement $ad-bc \neq 0$ but 
this is equivalent to \eqref{mobius} which proves to be 
more convenient.
An {\em open circular domain} is the 
image (in $\CC$) of the upper half-plane $H$ under a 
M\"obius transformation, i.e., an open disk, the complement of a closed disk 
or an open affine half-plane. 

As usual, the {\em support}  of a polynomial 
$f(z)=\sum_{\alpha \in \NN^n} a(\alpha) z^\alpha \in \CC[z_1,\ldots, z_n]$, 
i.e., the set $\{\alpha \in \NN^n : a(\alpha) \neq 0\}$, is 
denoted by $\supp(f)$. 
We say that $f$ is of degree $\kappa=(\kappa_1,\ldots,\kappa_n) \in \NN^n$ if 
$\deg_{z_i}(f)=\kappa_i$, $i\in [n]$, where as before $[n]=\{1,\ldots,n\}$. 

To deal with discs and exterior of discs we need some auxiliary results.

\begin{lemma}\label{maxsupport}
Let $\{C_i\}_{i=1}^n$ be a family of circular domains, 
$f \in \CC[z_1,\ldots, z_n]$ be of degree $\kappa\in\NN^n$, and 
$J \subseteq [n]$ a (possibly empty) set such that $C_j$ is the exterior of 
a disk whenever $j \in J$. Denote by $g$ 
the polynomial in the variables $z_j$, $j \in J$, obtained by setting 
$z_i = c_i \in C_i$ arbitrarily for $i \notin J$. If $f$ is 
$C_1\times\cdots\times C_n$-stable then 
$\supp(g)$ has a unique maximal element $\gamma$ with respect to the 
standard partial order on $\NN^J$. Moreover, 
$\gamma$ is the same for all choices of $c_i \in C_i$, $i \notin J$. 
\end{lemma}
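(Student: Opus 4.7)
The plan is to identify the claimed $\gamma$ intrinsically from $f$ via a rescaling-plus-Hurwitz argument that exploits the fact each $C_j$ with $j\in J$ contains a full neighbourhood of infinity. Writing $f(z)=\sum_{\alpha}a(\alpha)z^\alpha$, set
$$
\Sigma_J=\{\alpha_J:\alpha\in\supp(f)\}\subseteq\NN^J,\qquad
h_\beta(z_{J^c})=\sum_{\alpha\in\supp(f),\,\alpha_J=\beta}a(\alpha)\,z_{J^c}^{\alpha_{J^c}},
$$
so that $g(z_J)=\sum_\beta h_\beta(c_{J^c})z_J^\beta$, $\Sigma_J=\{\beta:h_\beta\not\equiv 0\}$, and $\supp(g)\subseteq\Sigma_J$. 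The lemma reduces to showing that $\Sigma_J$ has a unique componentwise maximum $\gamma$ and that $h_\gamma$ is nowhere zero on $\prod_{i\in J^c}C_i$.

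For any $e\in\RR^J_{>0}$, the substitution $z_j\mapsto R^{e_j}t_j$ (for $j\in J$) sends $z_j$ into $C_j$ whenever $t_j\neq 0$ and $R$ is large enough. Hence $f(R^e t,z_{J^c})$ is nonvanishing on each compact subset of $(\CC^*)^J\times\prod_{i\in J^c}C_i$ for $R$ sufficiently large. With $\mu(e)=\max_{\beta\in\Sigma_J}\beta\cdot e$, dividing by $R^{\mu(e)}$ and letting $R\to\infty$ produces uniform-on-compacts convergence to
$$
P_e(t,z_{J^c}):=\sum_{\beta\in\Sigma_J,\,\beta\cdot e=\mu(e)}h_\beta(z_{J^c})\,t^\beta.
$$
By Hurwitz's Theorem~\ref{mult-hur}, $P_e$ is either identically zero or nonvanishing on the domain $(\CC^*)^J\times\prod_{i\in J^c}C_i$; as at least one summand is not identically zero, $P_e$ is nonvanishing there.

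Applied with $e$ in the interior of the outer normal cone at a vertex $\gamma$ of $\mathrm{conv}(\Sigma_J)$ lying on its upper envelope, so $\gamma$ uniquely achieves $\mu(e)$, this gives $P_e=h_\gamma(z_{J^c})t^\gamma$; hence $h_\gamma$ is nowhere zero on $\prod_{i\in J^c}C_i$. Suppose for contradiction that $\Sigma_J$ has two distinct componentwise maxima. Then the upper envelope of $\mathrm{conv}(\Sigma_J)$ is not a single point, so it has at least two vertices $\gamma\neq\gamma'$, both lying in $\Sigma_J$. Since $\gamma-\gamma'$ has entries of both signs, the hyperplane $\{e\cdot(\gamma-\gamma')=0\}$ meets $\RR^J_{>0}$; choose $e^*$ in this intersection inside the common boundary of the outer normal cones of $\gamma,\gamma'$, so $e^*\cdot\gamma=e^*\cdot\gamma'=\mu(e^*)$. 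The corresponding $P_{e^*}$ has support $S'\supseteq\{\gamma,\gamma'\}$, and at any fixed $c_{J^c}$ the coefficients $h_\gamma(c_{J^c}),h_{\gamma'}(c_{J^c})$ are nonzero by the previous step, so $P_{e^*}(\cdot,c_{J^c})$ has at least two nonzero monomials. Picking $v\in\ZZ^J$ with $v\cdot\beta$ injective on $S'$ (take entries of $v$ with sufficiently disparate magnitudes) and substituting $t_j=s^{v_j}$ turns $P_{e^*}(\cdot,c_{J^c})$ into a univariate Laurent polynomial with at least two nonzero coefficients, which has a zero $s_0\in\CC^*$; this yields $t=(s_0^{v_j})\in(\CC^*)^J$ with $P_{e^*}(t,c_{J^c})=0$, contradicting nonvanishing.

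Therefore $\Sigma_J$ has a unique componentwise maximum $\gamma$, manifestly independent of the $c_i$'s. Since $\gamma$ is then the only vertex of the upper envelope, $h_\gamma$ is nowhere zero on $\prod_{i\in J^c}C_i$, so $\gamma\in\supp(g)$ for every choice of $c_{J^c}$. Finally, every $\beta\in\supp(g)\subseteq\Sigma_J$ satisfies $\beta\le\gamma$: a maximal chain in the finite poset $\Sigma_J$ starting from $\beta$ must end at a componentwise maximum, which is forced to be $\gamma$. Thus $\gamma$ is the unique maximum of $\supp(g)$, proving both assertions of the lemma. The main subtle point is the convex-geometric step guaranteeing two upper-envelope vertices from two componentwise maxima—clear because the upper envelope is a subcomplex of a compact polytope whose vertices belong to $\Sigma_J$, and it collapses to a point precisely when $\Sigma_J$ has a unique componentwise maximum.
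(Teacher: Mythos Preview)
Your argument is correct and takes a recognizably different route from the paper's. Both proofs exploit the fact that each $C_j$ with $j\in J$ contains a neighbourhood of infinity, but the mechanisms differ: the paper substitutes $z_i\mapsto\lambda_i t^{\beta_i}$ to obtain a one-parameter family of monic univariate polynomials whose zeros are confined to the closed unit disk, and then argues that if $\supp(f)$ had two maximal elements one could vary $\lambda$ so that the degree of the specialization changes, forcing a zero to escape to infinity---a contradiction with the uniform bound on the zeros. You instead take the scaling limit $z_j\mapsto R^{e_j}t_j$, $R\to\infty$, and invoke Hurwitz to conclude that the weighted-leading-form $P_e$ is nonvanishing on $(\CC^*)^J\times\prod_{i\in J^c}C_i$, then use Newton-polytope geometry to locate an $e^*\in\RR^J_{>0}$ for which $P_{e^*}$ has at least two monomials with nonvanishing coefficients, and finish with a Laurent-polynomial substitution. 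Your approach has the advantage of yielding directly that $h_\gamma$ is nowhere zero on $\prod_{i\in J^c}C_i$ (so the ``same $\gamma$ for all $c_i$'' assertion is built in), while the paper handles the two clauses of the lemma separately; the paper's approach is shorter and avoids convex geometry entirely.

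One point deserves an extra sentence: when you pass from ``the upper envelope has at least two vertices $\gamma\neq\gamma'$'' to ``choose $e^*\in\RR^J_{>0}$ in the common boundary of their outer normal cones'', you need $\gamma,\gamma'$ adjacent with the normal cone of the connecting edge meeting $\RR^J_{>0}$. This follows because the vertex normal cones meeting $\RR^J_{>0}$ cover the connected open set $\RR^J_{>0}$, so if there are at least two, some wall between two of them lies inside $\RR^J_{>0}$; taking $\gamma,\gamma'$ to be the corresponding adjacent vertices gives what you need. As written this step is slightly elliptical but easily repaired.
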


\begin{proof} 
Let us first prove the lemma in the case when $J=[n]$. 
We may assume that $C_j$ is the exterior of the 
 closed unit disk $\overline{\DD}$ for all $j \in [n]$. Suppose that there is 
no unique maximal element of $\supp(f)$. For fixed  
$\lambda=(\lambda_1,\ldots,\lambda_n) \in \RR_+^n$ and 
$\beta=(\beta_1,\ldots,\beta_n)\in\NN^n$  consider the univariate polynomial 
$$
F(\lambda,\beta ;t):= K f(\lambda_1t^{\beta_1}, \ldots, \lambda_nt^{\beta_n}),
$$
where $K=K(\lambda,\beta) \in \CC$ is the appropriate constant making $F$ 
monic.  
Let $\mathcal{A}(\beta)$ be the closure (with respect to uniform convergence 
on compacts) of the set 
$
\{ F(\lambda,\beta;t) : \lambda \in (1, \infty)^n \}. 
$
By Hurwitz' theorem (Theorem \ref{mult-hur}) 
all elements of $\mathcal{A}(\beta)$ are 
$\CC\setminus\overline{\DD}$-stable. However, since there is no unique 
maximal element of $\supp(f)$ there exists $\beta\in\NN^n$ such that 
$\mathcal{A}(\beta)$ contains polynomials of different degrees. 
Hence, as we vary $\lambda$ at least one zero must escape to infinity. 
This is impossible since all the zeros of $F(\lambda,\beta;t)$ have modulus 
at most one. 

If $J \neq [n]$ and the maximal elements are different for different choices 
of $c_i$ then as above some zero of a specialization of $f$ would escape to 
infinity contradicting the boundedness of the zeros.   
\end{proof}
When dealing with non-convex circular domains one has to be careful with 
degrees. We are therefore led to consider the following classes of 
polynomials. 

\begin{notation}\label{not-nv}
If $\{C_i\}_{i=1}^n$ is a family of circular domains and 
$\kappa=(\kappa_1,\ldots,\kappa_n) \in \NN^n$ we let 
$\NV_\kappa(C_1,\ldots,C_n)$ denote the set of 
$C_1 \times \cdots \times C_n$-stable polynomials in  
 $\CC_\kappa[z_1,\ldots,z_n]$ that have degree $\kappa_j$ in 
$z_j$ whenever $C_j$ is non-convex. (If all $C_j$ are convex then 
$\NV_\kappa(C_1,\ldots,C_n)$ is just the set of all 
$C_1 \times \cdots \times C_n$-stable polynomials in  
$\CC_\kappa[z_1,\ldots,z_n]$.) Let also
$$
\NV(C_1,\ldots,C_n)=\bigcup_{\kappa\in\NN^n}\NV_\kappa(C_1,\ldots,C_n). 
$$
\end{notation}

\begin{lemma}\label{translate}
Suppose that $C_1, \ldots, C_n, D_1,\ldots, D_n$ are open circular domains and 
$\kappa=(\kappa_1,\ldots,\kappa_n) \in \NN^n$. Then there are M\"obius 
transformations 
\begin{equation}\label{mob-n}
\zeta\mapsto \phi_i(\zeta)=\frac{a_i\zeta+b_i}{c_i\zeta+d_i},\quad i\in [n],
\end{equation}
as in \eqref{mobius} such that the (invertible) 
linear transformation  
$\Phi_\kappa:  \CC_\kappa[z_1,\ldots, z_n] \rightarrow 
\CC_\kappa[z_1, \ldots, z_n]$ defined by 
\begin{equation}\label{Phik}
\Phi_\kappa(f)(z_1,\ldots,z_n)
=(c_1z_1+d_1)^{\kappa_1}\cdots (c_nz_n+d_n)^{\kappa_n}
f(\phi_1(z_1),\ldots,\phi_n(z_n))
\end{equation}
restricts to a bijection between $\NV_\kappa(C_1,\ldots,C_n)$ and 
$\NV_\kappa(D_1,\ldots,D_n)$. 
\end{lemma}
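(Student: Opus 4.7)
The plan is to construct the Möbius transformations $\phi_i$ directly and then verify that $\Phi_\kappa$ restricts to the desired bijection. Since every open circular domain is a Möbius image of the upper half-plane $H$, for each $i$ we may choose $\phi_i$ of the form \eqref{mob-n} satisfying $\phi_i(D_i)=C_i$, and then rescale the four coefficients so that $a_id_i-b_ic_i=1$. A direct expansion of \eqref{Phik} shows that $\Phi_\kappa$ is a well-defined linear self-map of $\CC_\kappa[z_1,\ldots,z_n]$; invertibility is immediate since $\phi_i^{-1}$ again has the form \eqref{mobius} with unit determinant, and the same construction with $\phi_i^{-1}$ in place of $\phi_i$ provides $\Phi_\kappa^{-1}$.

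To show $\Phi_\kappa(f)\in\NV_\kappa(D_1,\ldots,D_n)$ for $f\in\NV_\kappa(C_1,\ldots,C_n)$, I would fix $(z_1,\ldots,z_n)\in D_1\times\cdots\times D_n$ and set $S=\{i:c_iz_i+d_i=0\}$. If $S=\emptyset$, then $\phi_i(z_i)\in C_i$ for every $i$ and $C$-stability of $f$ gives $\Phi_\kappa(f)(z)\ne 0$. If $S\ne\emptyset$, each $i\in S$ forces $\phi_i(z_i)=\infty\in C_i$, so $C_i$ is non-convex; the $\NV_\kappa$ hypothesis then supplies $\deg_{z_i}(f)=\kappa_i$ for these $i$. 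Expanding and using $a_id_i-b_ic_i=1$ (which gives $a_i(-d_i/c_i)+b_i=-1/c_i$) yields
\[
\Phi_\kappa(f)(z)
=\prod_{i\in S}(-1/c_i)^{\kappa_i}\prod_{i\notin S}(c_iz_i+d_i)^{\kappa_i}\, p_{\kappa_S}\!\bigl(\phi_{S^c}(z_{S^c})\bigr),
\]
where $p_{\kappa_S}(w_{S^c})$ is the coefficient of $\prod_{i\in S}z_i^{\kappa_i}$ in $f$, viewed as a polynomial in $\{z_i:i\notin S\}$. The first two products are non-zero, so the non-vanishing reduces to the statement that $p_{\kappa_S}$ does not vanish on $\prod_{i\notin S}C_i$.

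This sub-claim is the heart of the proof, and it follows from Lemma \ref{maxsupport}: applying that lemma with $J=S$ (each index in $S$ corresponds to a non-convex $C_i$), the unique maximum of the support of the specialized polynomial $f(\,\cdot\,,c_{S^c})$ must equal $\kappa_S$, for otherwise the coefficient of $\prod_{i\in S}z_i^{\kappa_i}$ in $f$ would vanish identically on $C_{S^c}$, which by the identity theorem would force $\deg_{z_{i_0}}(f)<\kappa_{i_0}$ for some $i_0\in S$, contradicting $\NV_\kappa$. The degree condition on $\Phi_\kappa(f)$ at each non-convex $D_i$ is handled by the same mechanism applied at $z_i=\infty$: the coefficient of $z_i^{\kappa_i}$ in $\Phi_\kappa(f)$ equals a non-zero constant times $f$ evaluated at $\phi_i(\infty)\in C_i$ (or, when $c_i=0$, a non-zero constant times the leading coefficient of $f$ in $z_i$, which is non-zero since $C_i$ is then forced to be non-convex). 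Invoking the analogous statement for $\Phi_\kappa^{-1}$ yields the reverse inclusion and completes the bijection. The main obstacle throughout is the simultaneous management of the poles $-d_i/c_i$ and the corresponding degree conditions on $f$, which is precisely what the definition of $\NV_\kappa$ in Notation \ref{not-nv} is engineered to accommodate, and for which Lemma \ref{maxsupport} supplies the key technical input.
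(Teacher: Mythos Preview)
Your approach handles all $n$ variables simultaneously, whereas the paper reduces to changing one coordinate at a time via an equivalence-relation argument (taking $D_1=H$ and $C_j=D_j$ for $j\ge 2$). Both strategies hinge on Lemma~\ref{maxsupport} for the non-convex case, so they are close in spirit; your route is more direct but requires keeping track of the full set $S$ of poles at once.

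There is, however, a genuine gap in your invocation of Lemma~\ref{maxsupport}. You argue that if the unique maximum $\gamma$ of $\supp f(\,\cdot\,,c_{S^c})$ differs from $\kappa_S$, then $p_{\kappa_S}$ vanishes identically on $C_{S^c}$, and from this you conclude $\deg_{z_{i_0}} f < \kappa_{i_0}$ for some $i_0\in S$. That last implication is false: with $n=2$, $S=\{1,2\}$, $\kappa=(2,2)$ and $f=z_1^2+z_2^2$, the coefficient $p_{\kappa_S}$ of $z_1^2z_2^2$ is identically zero while $\deg_{z_1}f=\deg_{z_2}f=2$. The fix is to use more of what Lemma~\ref{maxsupport} actually gives you. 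If $\gamma\neq\kappa_S$, pick $i_0\in S$ with $\gamma_{i_0}<\kappa_{i_0}$. Since $\gamma$ is the \emph{unique} maximal element of $\supp f(\,\cdot\,,c_{S^c})$, every $\alpha$ in that support satisfies $\alpha_{i_0}\le\gamma_{i_0}<\kappa_{i_0}$; hence the coefficient of $z_{i_0}^{\kappa_{i_0}}$ in $f$ (a polynomial in the remaining $n-1$ variables, not in the $S^c$ variables alone) vanishes whenever the $S^c$-variables lie in $C_{S^c}$. The identity theorem then forces this coefficient to vanish identically, contradicting $\deg_{z_{i_0}}f=\kappa_{i_0}$. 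With this repair, the remainder of your argument---the computation of $\Phi_\kappa(f)$ at the poles, the degree check via $\phi_i(\infty)\in C_i$, and the appeal to $\Phi_\kappa^{-1}$ for the reverse inclusion---goes through.
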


\begin{proof}
The relation between tuples of open circular domains defined by relating 
tuples for which there is such 
a linear transformation $\Phi_\kappa$ is an equivalence relation. Therefore it 
suffices to assume that 
$C_j=D_j$ for $j\geq 2$ and that $D_1=H\,(=\bH_0)$. Thus we want to find 
$\Phi_\kappa$ of the form 
$$
\Phi_\kappa(f)= 
(cz_1+d)^{\kappa_1} f\!\left(\frac{az_1+b}{cz_1+d},z_2,\ldots,z_n\right). 
$$
If $C_1$ is convex we may chose $\phi(z)= (az+b)/(cz+d)$ as in \eqref{mobius} 
so that $\phi(H)=C_1$ and  $cz+d \neq 0$ for all $z \in H$. If $C_1$ is 
non-convex we may chose $\phi$ so that $\phi(H\setminus\{\eta\})=C_1$ for
 some specific point $\eta \in H$, namely $\eta=-d/c$. 

Now let $f \in \NV_\kappa(C_1,\ldots,C_n)$ and  $\xi=(\xi_1,\ldots, \xi_n) \in D_1\times \cdots \times D_n$. If $c\xi_1+d \neq 0$ then $\phi(\xi_1) \in C_1$ and $\Phi_\kappa(f)(\xi)\neq 0$ by construction. If $c\xi_1+d = 0$ then $C_1$ is non-convex so by  Lemma \ref{maxsupport} the polynomial 
$z_1 \mapsto f(z_1,\xi_2, \xi_3, \ldots, \xi_n)$ may be written as 
$
\sum_{k=0}^{\kappa_1}\alpha_k z_1^k
$
with $\alpha_{\kappa_1}\neq 0$. But then 
$$
\Phi_\kappa(f)(\xi_1,\ldots, \xi_n)
= \alpha_{\kappa_1}(ad-bc)^{\kappa_1}/c^{\kappa_1} 
\neq 0. 
$$
Hence $\Phi_\kappa(f) \in \NV_\kappa(D_1,\ldots,D_n)$.

If $g \in \NV_\kappa(D_1,\ldots,D_n)$ consider 
$$
\Phi^{-1}_\kappa(g)= 
(-cz_1+a)^{\kappa_1} g\!\left(\frac{dz_1-b}{-cz_1+a},z_2,\ldots,z_n\right),
$$
where $\phi(z)=(az+b)/(cz+d)$ is chosen as above. It follows that $-cz+d \neq 0$ for all $z \in C_1$ so it only remains to prove that the degree of $\Phi^{-1}_\kappa(g)$ in $z_1$ is $\kappa_1$ when $C_1$ is non-convex. To do this fix $\xi_j \in D_j$, $2\le j\le n,$ and consider the $H$-stable polynomial 
$g(z_1,\xi_2,\ldots,\xi_n)= \sum_{k=0}^{\kappa_1}a_kz_1^k$. The coefficient in front of $z_1^{\kappa_1}$ in 
$
\Phi^{-1}_\kappa(g)(z_1,\xi_2,\ldots,\xi_n)
$
is $(-c)^{\kappa_1}\sum_{k=0}^{\kappa_1}a_k(-d/c)^k$ which is non-zero since as we previously noted one has $-d/c \in H$. 
\end{proof}

The following theorem answers a more precise version of 
Problem \ref{prob1} for $\KK=\CC$ and 
$\Omega=C_1\times\cdots\times C_n$, where the $C_i$ are arbitrary open 
circular domains.

\begin{theorem}\label{th-prod-circ}
Let $\kappa \in \NN^n$ and $T : \CC_\kappa[z_1,\ldots, z_n] \rightarrow 
\CC[z_1,\ldots, z_n]$ be a linear operator. Let further 
$C_i$, $i\in [n]$, be open 
circular domains given by $C_i=\phi_i^{-1}(H)$, $i\in [n]$, 
where 
$\phi_i$, $i\in [n]$, are M\"obius transformations as in \eqref{mob-n} 
such that the corresponding linear transformation $\Phi_\kappa$ defined in 
\eqref{Phik} restricts to 
a bijection between $\NV_\kappa(H,\ldots,H)$ and  $\NV_\kappa(C_1,\ldots,C_n)$ 
 (cf.~Lemma \ref{translate}). Then
$$
T:\NV_\kappa(C_1,\ldots,C_n)\to \NV(C_1,\ldots,C_n)\cup \{0\}
$$
if and only if either
\begin{itemize}
\item[(a)] $T$ has range of dimension at most one and is of the form 
$$
T(f) = \alpha(f)P,
$$
where $\alpha$ is a linear functional on $\CC_\kappa[z_1,\ldots, z_n]$ and 
$P$ is a $C_1\times \cdots \times C_n$-stable polynomial, or 
\item[(b)] the polynomial in $2n$ variables $z_1,\ldots,z_n,w_1,\ldots,w_n$ 
given by
$$
T\left[\prod_{i=1}^{n}
\big((a_iz_i+b_i)(c_iw_i+d_i)+(a_iw_i+b_i)(c_iz_i+d_i)\big)^{\kappa_i}\right]
$$ 
is $C_1\times \cdots \times C_n\times C_1\times \cdots \times C_n$-stable. 
\end{itemize}
\end{theorem}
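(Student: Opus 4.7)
The strategy is to reduce the assertion to the half-plane case (Theorem~\ref{multi-finite-stab}) by conjugating $T$ with the M\"obius-induced bijections supplied by Lemma~\ref{translate}, applying the half-plane dichotomy, and translating the outcome back. Since $T$ has finite-dimensional domain, pick $\gamma \in \NN^n$ (minimal) with $T(\CC_\kappa[z]) \subseteq \CC_\gamma[z]$, and use the M\"obius transformations $\phi_i$ appearing in the theorem statement to form both linear bijections $\Phi_\kappa : \NV_\kappa(H,\ldots,H) \to \NV_\kappa(C_1,\ldots,C_n)$ and $\Phi_\gamma : \NV_\gamma(H,\ldots,H) \to \NV_\gamma(C_1,\ldots,C_n)$ from Lemma~\ref{translate}. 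I then define the conjugate operator
\[
S \;=\; \Phi_\gamma^{-1} \circ T \circ \Phi_\kappa \;:\; \CC_\kappa[z] \longrightarrow \CC_\gamma[z]
\]
and verify that $T$ sends $\NV_\kappa(C_1,\ldots,C_n)$ into $\NV(C_1,\ldots,C_n) \cup \{0\}$ if and only if $S$ preserves ordinary $H$-stability on $\CC_\kappa[z]$.

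Applying Theorem~\ref{multi-finite-stab} to $S$ splits the analysis into two cases. If $S$ has range of dimension at most one, say $S(f) = \alpha(f) P$, the identity $T = \Phi_\gamma \circ S \circ \Phi_\kappa^{-1}$ yields $T(g) = \beta(g) Q$ with $\beta(g) := \alpha(\Phi_\kappa^{-1}(g))$ a linear functional on $\CC_\kappa[z]$ and $Q := \Phi_\gamma(P)$ a $C_1 \times \cdots \times C_n$-stable polynomial by Lemma~\ref{translate}, giving conclusion~(a). If instead $G_S(z,w) = S[(z+w)^\kappa] \in \HH_{2n}(\CC)$, a direct calculation unwinding the definitions of $\Phi_\kappa$ and $\Phi_\gamma^{-1}$ in $G_S$, and using the M\"obius identities $\phi_i(w_i)(c_iw_i+d_i) = a_iw_i+b_i$ and $-c_i\phi_i(z_i) + a_i = (c_iz_i+d_i)^{-1}$, produces the polynomial identity
\[
T\!\left[\prod_{i=1}^n\bigl((a_iz_i+b_i)(c_iw_i+d_i)+(a_iw_i+b_i)(c_iz_i+d_i)\bigr)^{\kappa_i}\right] \;=\; \prod_{i=1}^n (c_iz_i+d_i)^{\gamma_i}(c_iw_i+d_i)^{\kappa_i}\, G_S\bigl(\phi_1(z_1),\ldots,\phi_n(z_n),\phi_1(w_1),\ldots,\phi_n(w_n)\bigr).
\]
For $(z,w) \in C_1\times\cdots\times C_n\times C_1\times\cdots\times C_n$ the linear factors $c_iz_i+d_i$, $c_iw_i+d_i$ are nonzero (their roots $-d_i/c_i = \phi_i^{-1}(\infty)$ lie on $\partial C_i$) while $(\phi_1(z_1),\ldots,\phi_n(w_n)) \in H^{2n}$; hence the polynomial on the left of this identity vanishes at $(z,w)$ if and only if $G_S$ vanishes at the corresponding point of $H^{2n}$. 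Since the $\phi_i$ biject $C_i$ onto $H$, this polynomial is $C_1\times\cdots\times C_n\times C_1\times\cdots\times C_n$-stable precisely when $G_S \in \HH_{2n}(\CC)$, which is conclusion~(b). The sufficiency direction is obtained by reversing each link in the chain of equivalences.

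The main technical obstacle is the exact-degree condition built into $\NV_\kappa(C_1,\ldots,C_n)$ when some $C_i$ is non-convex (i.e., the exterior of a disk). The explicit formula $\Phi_\gamma^{-1}(h)(u) = h(\phi_1^{-1}(u_1),\ldots,\phi_n^{-1}(u_n))\prod_i(-c_iu_i+a_i)^{\gamma_i}$ yields an $H$-stable polynomial from a $C$-stable $h \in \CC_\gamma[z]$ only when $h$ has degree exactly $\gamma_i$ in each non-convex variable $z_i$; otherwise the factor $(-c_iu_i+a_i)^{\gamma_i}$ introduces spurious zeros at $u_i = a_i/c_i = \phi_i(\infty)$, and this point lies inside $H$ precisely when $C_i$ is non-convex. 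To justify the conjugation I would argue that for generic $f \in \NV_\kappa(H)$ the polynomial $T(\Phi_\kappa(f))$ attains degree $\gamma_i$ in each non-convex direction (invoking Lemma~\ref{maxsupport} to control the leading support of $T(\Phi_\kappa(f))$, the Zariski-closedness of the degree-drop locus, and Lemma~\ref{plenty} to produce generic stable test polynomials), and then extend $H$-stability of $S(f)$ from this dense set to all of $\NV_\kappa(H)$ via the multivariate Hurwitz theorem (Theorem~\ref{mult-hur}).
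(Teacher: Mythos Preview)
Your approach is exactly the paper's: conjugate $T$ by the M\"obius-induced isomorphisms to obtain $S=\Phi_\gamma^{-1}\circ T\circ\Phi_\kappa$, invoke Theorem~\ref{multi-finite-stab} for $S$, and then recognize that $\Phi_{\gamma\oplus\kappa}(G_S)$ is precisely the polynomial in part~(b). Your displayed identity for the symbol and the non-vanishing of the linear factors $c_iz_i+d_i$ on $C_i$ match the paper's computation.

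Where you go further is in flagging the degree issue for non-convex $C_j$: the equivalence ``$T$ maps $\NV_\kappa(C)$ into $\NV(C)\cup\{0\}$ $\Longleftrightarrow$ $S$ preserves $H$-stability'' is asserted in the paper by a bare appeal to Lemma~\ref{translate}, but as you observe, $\Phi_\gamma^{-1}$ applied to an $h\in\NV(C)\cap\CC_\gamma[z]$ with $\deg_{z_j}(h)<\gamma_j$ picks up a spurious factor $(-c_jz_j+a_j)^{\gamma_j-\deg_{z_j}(h)}$, whose root $a_j/c_j=\phi_j(\infty)$ does lie in $H$ when $C_j$ is non-convex. Your proposed fix by genericity and Hurwitz is sound: with $\gamma$ chosen minimal, Lemma~\ref{plenty} guarantees that $\NV_\kappa(H)$ linearly spans $\CC_\kappa[z]$, so the (linear) degree-drop locus cannot contain it; its complement is then open and dense in the open set $\NV_\kappa(H)$, and Theorem~\ref{mult-hur} extends $H$-stability of $S(f)$ from this dense set to all of $\NV_\kappa(H)$. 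Note that Lemma~\ref{maxsupport} is not really the engine here---it concerns the support of a \emph{single} $C$-stable polynomial, not constancy of degree across a family---so you may drop that citation; Lemma~\ref{plenty} plus Hurwitz already suffices.
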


\begin{remark}\label{unit-disk}
By analogy with the definitions made in \S \ref{new-ss-not} we may call the
 polynomial in 
Theorem \ref{th-prod-circ} (b) the {\em algebraic symbol of} $T$ {\em with 
respect to} $C_1\times \cdots \times C_n$. Note that in the case of the open 
unit disk $C_i=\DD$, $i\in [n]$, or the exterior of the closed unit disk 
$C_i=\CC\setminus \overline{\DD}$, $i\in [n]$, this 
symbol becomes a constant multiple of 
$T[(1+zw)^\kappa]$ while in the case of an open half-plane bordering on 
the origin it is just a constant multiple of $G_T(z,w)=T[(z+w)^\kappa]$. 
For the open right
half-plane $\bH_{\frac{\pi}{2}}$ it is often more convenient 
(but equivalent) to choose the symbol $T[(1+zw)^\kappa]$. 
\end{remark}

\begin{proof}[Proof of Theorem~\ref{th-prod-circ}] 
Let $T: \CC_\kappa[z_1,\ldots, z_n] \rightarrow \CC[z_1,\ldots, z_n]$ 
be a linear operator and $\gamma=(\gamma_1,\ldots,\gamma_n)\in\NN^n$ 
be such that 
$T(\CC_\kappa[z_1,\ldots, z_n])\subseteq \CC_\gamma[z_1,\ldots, z_n]$. 
The theorem is trivial if the range of $T$ has dimension at most one so we may 
assume that this is not the case. With $\phi_i$, $i\in [n]$, and $\Phi_\kappa$ 
as in the statement of the theorem let 
$\Phi_\gamma:\CC_\gamma[z_1,\ldots, z_n] \rightarrow 
\CC_\gamma[z_1,\ldots, z_n]$ be the linear transformation defined as in 
\eqref{Phik} only with $\gamma=(\gamma_1,\ldots,\gamma_n)$ instead of 
$\kappa=(\kappa_1,\ldots,\kappa_n)$. Set $T':=\Phi^{-1}_\gamma T \Phi_\kappa$. 
By Theorem~\ref{multi-finite-stab} and 
Lemma~\ref{translate} we have the following chain of equivalences:
\begin{equation*}
\begin{split}
\text{$T(\NV_\kappa(C_1,\ldots,C_n))\subset \NV(C_1,\ldots,C_n)\cup \{0\}$}
&\Longleftrightarrow\\
\text{ $T'$ preserves ($H$-)stability}&\Longleftrightarrow\\ 
\text{ $G(z,w):=T'[(z+w)^\kappa]$ is ($H$-)stable }&\Longleftrightarrow\\ 
\text{ $\Phi_{\gamma \oplus \kappa} (G)$ is 
$C_1\times \cdots \times C_n\times C_1\times \cdots \times C_n$-stable},&
\end{split}
\end{equation*} 
where $\Phi_{\gamma \oplus \kappa}:
\CC_{\gamma \oplus \kappa}[z_1,\ldots,z_n,w_1,\ldots,w_n]\to
\CC_{\gamma \oplus \kappa}[z_1,\ldots,z_n,w_1,\ldots,w_n]$ is given by
\begin{multline*}
\Phi_{\gamma \oplus \kappa}(f)(z_1,\ldots,z_n,w_1,\ldots,w_n)\\
=\prod_{i=1}^{n}(c_iz_i+d_i)^{\gamma_i}
\prod_{i=1}^{n}(c_iw_i+d_i)^{\kappa_i}
f(\phi_1(z_1),\ldots,\phi_n(z_n),\phi_1(w_1),\ldots,\phi_n(w_n)),
\end{multline*}
see \eqref{Phik}. Now $\Phi_{\gamma \oplus \kappa} (G)$ is precisely the 
polynomial in 
Theorem~\ref{th-prod-circ} (b). Indeed, letting subscript $z$ and subscript 
$w$ indicate that the 
corresponding operator acts on the $z$ and $w$ variables, respectively, we have
\begin{equation*}
\begin{split}
\Phi_{\gamma \oplus \kappa}(G)(z,w)&
=\big(\Phi_{\kappa,w} \circ \Phi_{\gamma,z} 
\circ \Phi^{-1}_{\gamma,z} \circ T_z \circ \Phi_{\kappa,z}\big) 
\Big[(z+w)^\kappa\Big]\\
&= T\Big[ \Phi_{\kappa \oplus \kappa}\Big( (z+w)^\kappa \Big) \Big],
\end{split}
\end{equation*}
which completes the proof of the theorem.
\end{proof}

Note that if all $C_i$ are open {\em convex} circular domains then  
Theorem \ref{th-prod-circ} answers Problem \ref{prob1} (for such domains) 
precisely as stated in the introduction, and when non-convex circular
 domains are involved 
it answers a more precise version of the problem.

\subsection{Linear Preservers of Lee-Yang Polynomials}\label{s-LY} 
Given an open circular domain $C\subset \CC$ let $C^r$ be the interior 
of the complement of 
$C$, that is, $C^r=\CC\setminus \overline{C}$. 

\begin{definition}\label{d-ly}
Let $\kappa=(\kappa_1,\ldots,\kappa_n)\in\NN^n$ and  
$\{C_i\}_{i=1}^n$ be a family of open circular domains. The set of 
{\em $\kappa$-Lee-Yang polynomials with respect to} 
$\{C_i\}_{i=1}^{n}$ is 
$$
\LY_\kappa(C_1,\ldots,C_n)=\NV_\kappa(C_1,\ldots,C_n)\cap 
\NV_\kappa(C_1^r,\ldots,C_n^r).
$$ 
A {\em Lee-Yang polynomial  
with respect to} $\{C_i\}_{i=1}^{n}$ is an element of the set
$$
\LY(C_1,\ldots,C_n):=\bigcup_{\kappa\in\NN^n}
\LY_\kappa(C_1,\ldots,C_n).
$$
\end{definition} 

This definition is motivated by the original 
Lee-Yang theorem \cite{LY}  and its various 
versions such as the so-called ``circle theorem'' 
(\cite[Theorem 8.4]{BB-II}, \cite{hink}, \cite{ruelle5}). For instance,  
the class of 
``complex Lee-Yang polynomials'' related to the latter which 
was introduced and studied in \cite{beau} corresponds to the case when 
$C_i=\DD$, $i\in [n]$, in Definition \ref{d-ly}. Note also that 
by Corollary \ref{cor-HB} the Lee-Yang polynomials with respect 
to $\{H_i\}_{i=1}^n$, where $H_i=H$, $i\in [n]$, are precisely the polynomials 
that are non-zero complex constant multiples of real stable polynomials. 
 
We say that a linear operator $T : \CC_\kappa[z_1,\ldots, z_n] \rightarrow 
\CC[z_1,\ldots, z_n]$ {\em preserves the $\kappa$-Lee-Yang property with 
respect to} $\{C_i\}_{i=1}^{n}$ 
if $T(f)\in\LY(C_1,\ldots,C_n)\cup \{0\}$ whenever 
$f\in \LY_\kappa(C_1,\ldots,C_n)$. From our symbol theorems we 
can deduce a characterization of all such linear operators. We only display 
the case when $T$ is {\em non-degenerate} in the sense that its image has 
dimension larger than 2. 

\begin{theorem}\label{th-ly-pres}
Let $\kappa \in \NN^n$, $T : \CC_\kappa[z_1,\ldots, z_n] \rightarrow 
\CC[z_1,\ldots, z_n]$ be a non-degenerate linear operator and 
$C_i$, $i\in [n]$, be open 
circular domains given by $C_i=\phi_i^{-1}(H)$, $i\in [n]$, 
where $\phi_i$, $i\in [n]$, are M\"obius transformations as in 
Theorem \ref{th-prod-circ}. Then $T$ preserves the $\kappa$-Lee-Yang property 
with respect to $\{C_i\}_{i=1}^{n}$ if and only if either
\begin{itemize}
\item[(a)] the polynomial in $2n$ variables $z_1,\ldots,z_n,w_1,\ldots,w_n$ 
given by
$$
T\left[\prod_{i=1}^{n}
\big((a_iz_i+b_i)(c_iw_i+d_i)+(a_iw_i+b_i)(c_iz_i+d_i)\big)^{\kappa_i}\right]
$$ 
is $C_1\times \cdots \times C_n\times C_1\times \cdots \times C_n$-stable and 
$C_1^r\times \cdots \times C_n^r\times C_1^r\times \cdots \times 
C_n^r$-stable, or
\item[(b)] the polynomial in $2n$ variables $z_1,\ldots,z_n,w_1,\ldots,w_n$ 
given by
$$
T\left[\prod_{i=1}^{n}
\big((a_iz_i+b_i)(c_iw_i+d_i)-(a_iw_i+b_i)(c_iz_i+d_i)\big)^{\kappa_i}\right]
$$ 
is $C_1\times \cdots \times C_n\times C_1\times \cdots \times C_n$-stable and 
$C_1^r\times \cdots \times C_n^r\times C_1^r\times \cdots \times 
C_n^r$-stable.
\end{itemize}
\end{theorem}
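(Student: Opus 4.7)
The plan is to reduce to the case $C_i=H$ via Lemma~\ref{translate} and then mirror the proof of Theorem~\ref{multi-finite-hyp}. Choose M\"obius maps $\phi_i:H\to C_i$ as in Theorem~\ref{th-prod-circ}; since $\phi_i$ also sends $-H$ onto $C_i^r$ (up to a point at infinity, which is accounted for by the degree conditions in $\NV_\kappa$), the transformation $\Phi_\kappa$ of Lemma~\ref{translate} simultaneously bijects $\NV_\kappa((-H)^n)$ with $\NV_\kappa(C_1^r,\ldots,C_n^r)$ and hence $\LY_\kappa(H,\ldots,H)$ with $\LY_\kappa(C_1,\ldots,C_n)$. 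Setting $T':=\Phi_\gamma^{-1}T\Phi_\kappa$, the theorem becomes the assertion that a non-degenerate $T'$ preserves $\LY_\kappa(H,\ldots,H)=\CC\cdot\HH_n(\RR)$ (cf.\ Proposition~\ref{intersection}) iff $G_{T'}(z,w)$ or $G_{T'}(z,-w)$ lies in $\CC\cdot\HH_{2n}(\RR)$. Under the $\Phi_{\gamma\oplus\kappa}$-pullback these two conditions become precisely the symbols in (a) and (b); in particular (b) is clean because $(a_iz_i+b_i)(c_iw_i+d_i)-(a_iw_i+b_i)(c_iz_i+d_i)=(a_id_i-b_ic_i)(z_i-w_i)=z_i-w_i$ by the normalization $a_id_i-b_ic_i=1$.

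Sufficiency in the $H$ case is immediate: if $G_{T'}(z,w)\in\CC\cdot\HH_{2n}(\RR)=\HH_{2n}(\CC)\cap\HH_{2n}^-(\CC)$, then Theorem~\ref{multi-finite-stab}, applied both for $H$ and---via the same algebraic symbol---for $-H$, shows that $T'$ preserves both $\HH_n(\CC)$- and $\HH_n^-(\CC)$-stability, hence their intersection $\LY$. Case (b) is reduced to (a) by replacing $T'$ with $\hat T'(f)(z):=(-1)^{|\kappa|}T'(f(-\cdot))(z)$, which has symbol $G_{T'}(z,-w)$ and preserves $\LY$ iff $T'$ does.

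For necessity, write $T'|_{\RR_\kappa[z]}=T'_1+iT'_2$ with $\RR$-linear maps $T'_j:\RR_\kappa[z]\to\RR[z]$. Any $f\in\RR_\kappa[z]\cap\HH_n(\RR)$ satisfies $T'(f)=(a+ib)q$ with $q\in\HH_n(\RR)$ and $a,b\in\RR$, so $T'_1(f)=aq$, $T'_2(f)=bq$ and each $T'_j$ preserves real stability. Specializing $f=(z+w)^\kappa$ with $w\in\RR^n$, the real polynomials $G_{T'_1}(z,w)$ and $G_{T'_2}(z,w)$ in $z$ are $\RR$-scalar multiples of a common real stable polynomial, so the rational function $G_{T'_2}(z,w)/G_{T'_1}(z,w)$ is independent of $z$ on the Zariski-dense set $\RR^n\subset\CC^n$ and hence identically so on $\CC^{2n}$. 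Comparing $w_i$-degrees (both symbols have degree exactly $\kappa_i$ in $w_i$) forces this ratio to be a real constant $r$, and reading off the coefficients of $w^{\kappa-\alpha}$ in $G_{T'_2}=rG_{T'_1}$ upgrades this to the operator identity $T'_2=rT'_1$, so $T'=(1+ir)T'_1$.

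The complex dimension of $\mathrm{im}(T')$ therefore equals the real dimension of $\mathrm{im}(T'_1)$, and the non-degeneracy hypothesis forces $T'_1$ into case (b) or (c) of Theorem~\ref{multi-finite-hyp}. If $T'_1$ is of type (b), then $G_{T'_1}\in\HH_{2n}(\RR)$ and $G_{T'}=(1+ir)G_{T'_1}\in\CC\cdot\HH_{2n}(\RR)$, giving case (a); type (c) yields case (b) by the same argument applied to $G_{T'_1}(z,-w)$. The principal obstacle will be the polynomial proportionality step---lifting the pointwise $\RR$-proportionality of $G_{T'_1}(\cdot,w)$ and $G_{T'_2}(\cdot,w)$ at all real $w$ to a single global constant $r\in\RR$, and the ensuing observation that this symbol-level identity is already the operator-level one, so that $T'$ is genuinely a complex scalar multiple of a single real stability preserver $T'_1$.
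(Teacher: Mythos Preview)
Your reduction to the half-plane case via Lemma~\ref{translate} and the sufficiency argument are correct, and the overall necessity strategy---decompose $T'|_{\RR_\kappa}=T'_1+iT'_2$, prove $T'_2=rT'_1$ for a real constant $r$, then invoke Theorem~\ref{multi-finite-hyp} for $T'_1$---is indeed how the $n=1$ argument in \cite{BBS1} extends. However, the crucial step establishing the \emph{global} proportionality $T'_2=rT'_1$ has a gap.

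You correctly deduce that for each $w\in\RR^n$ the polynomials $G_{T'_1}(\cdot,w)$ and $G_{T'_2}(\cdot,w)$ are $\RR$-proportional, hence $G_{T'_2}/G_{T'_1}$ is independent of $z$ and equals some rational function $R(w)$. But your degree argument for $R$ being constant fails on two counts. First, the symbols need not have degree exactly $\kappa_i$ in $w_i$: the coefficient of $w_i^{\kappa_i}$ in $G_{T'_j}$ is $T'_j$ applied to a polynomial not involving $z_i$, and nothing prevents this from vanishing. Second, even granting the degree claim, writing $R=A/B$ with $A,B$ coprime gives $G_{T'_1}=B(w)Q'(z,w)$ and $G_{T'_2}=A(w)Q'(z,w)$; both can respect the bound $\deg_{w_i}\le\kappa_i$ with $A,B$ non-constant (take e.g.\ $B=w_1$, $A=w_1+1$, $Q'$ of degree $\kappa_1-1$ in $w_1$).

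The gap is closable by a direct linear-algebra argument that uses non-degeneracy explicitly. You have $T'_1(f)\parallel T'_2(f)$ for every real stable $f$, and such $f$ span $\RR_\kappa$. If $\dim_\RR\mathrm{im}(T'_1)\ge 2$, pick $f_1,f_2$ real stable with $T'_1(f_1),T'_1(f_2)$ linearly independent and write $T'_2(f_k)=r_kT'_1(f_k)$. Applying the proportionality to $f_1+f_2$ (or, if that is not real stable, to $(z+w)^\kappa$ with $w$ chosen so that the images are close to prescribed targets) forces $r_1=r_2$; moreover any $f$ with $T'_1(f)=0$ must have $T'_2(f)=0$ as well, else adding it to $f_1$ and $f_2$ produces contradictory multiples. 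This yields $T'_2=rT'_1$. Non-degeneracy of $T'$ guarantees $\max(\dim\mathrm{im}(T'_1),\dim\mathrm{im}(T'_2))\ge 2$, since $\mathrm{im}(T')$ lies in the $\CC$-span of $\mathrm{im}(T'_1)\cup\mathrm{im}(T'_2)$. With this correction your argument goes through.
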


\begin{proof}
The proof of the corresponding theorem in the case $n=1$ from  \cite{BBS1},
namely \cite[Theorem 3]{BBS1}, extends naturally to the above setting. 
\end{proof}

\section{Related Results}\label{ss-95}





A multivariate polynomial is called {\em strictly stable} if it is 
non-vanishing whenever all variables lie in 
$\overline{H}=\{z\in\CC:\Im(z)\ge 0\}$. The arguments in \S \ref{s-6} 
carry over to the closed upper half-plane $\overline{H}$ and yield the 
following {\em sufficient} condition for strict stability preserving.

\begin{theorem}\label{th-strict}
Let $\kappa \in \NN^n$ and $T : \CC_\kappa[z_1,\ldots, z_n] \rightarrow 
\CC[z_1,\ldots, z_n]$ be a linear operator. If $G_T(z,w)$ is a strictly stable
polynomial then $T$ preserves strict stability. 
\end{theorem}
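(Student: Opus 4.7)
The plan is to transpose the sufficiency argument for Theorem \ref{multi-finite-stab} from Section \ref{s-6} to the closed upper half-plane $\overline{H}$ in place of the open upper half-plane $H$. Three features of the original argument accommodate this substitution: the Grace-Walsh-Szeg\"o theorem (Theorem \ref{GWS}) is already stated for closed circular domains; $\overline{H}$ is itself closed under addition and under the M\"obius involution $\zeta \mapsto -\zeta^{-1}$ on $\overline{H}\setminus\{0\}$; and Lemma \ref{polarized-symbol} is purely algebraic. In particular, the strict-stability analog of Proposition \ref{gws}, namely that $f$ is strictly stable if and only if $\Pi_\kappa^\uparrow(f)$ is strictly stable, follows by the same argument, invoking Theorem \ref{GWS} for the closed circular domain $\overline{H}$; and $\Pi_\kappa^\downarrow$ preserves strict stability by specialization to real values.

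The main obstacle will be the strict-stability analog of the Lieb-Sokal Lemma \ref{Li-So}: if $P(z) + wQ(z) \in \CC[z_1,\ldots,z_n,w]$ is strictly stable and of degree at most one in $z_j$, then $P - \partial Q/\partial z_j$ is either identically zero or strictly stable. I would first prove that if $Q \not\equiv 0$ then $Q$ itself is strictly stable. Indeed, strict stability of $P + wQ$ forces $P/Q$ to map $\{z \in \overline{H}^n : Q(z) \neq 0\}$ into the open upper half-plane $H$ (this is the strict form of Remark \ref{r-krein}), so a hypothetical zero $z^* \in \overline{H}^n$ of $Q$ would produce, on a generic complex line through $z^*$, a univariate meromorphic function with a pole at $z^*$; since the image of a punctured neighborhood of a pole covers a neighborhood of $\infty$, this cannot lie inside $H$. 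For interior zeros $z^* \in H^n$ any generic line works; for boundary zeros one restricts to lines whose direction is purely imaginary in the coordinates where $\Im z^*_j = 0$, so that the line remains in $\overline{H}^n$. Once $Q$ is known to be strictly stable, the substitution $z_1 \mapsto z_1 - w^{-1}$ (which maps $\overline{H}^{n+1}\setminus\{w = 0\}$ into $\overline{H}^n$) yields $\Im(\partial Q/\partial z_j(z)/Q(z)) \leq 0$ on $\overline{H}^n$; adding this to the strict inequality $\Im(P/Q) > 0$ gives $\Im((P - \partial Q/\partial z_j)/Q) > 0$ on $\overline{H}^n$, and the strict version of Remark \ref{r-krein} then implies that $(P - \partial Q/\partial z_j) + wQ$ is strictly stable. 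Setting $w = 0 \in \overline{H}$ completes the step.

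With the strict Lieb-Sokal lemma in hand, the multi-affine case (analog of Lemma \ref{ma-case}) follows by the same iteration as in Section \ref{s-6}: repeated application produces $\sum_{S \subseteq [n]} T[z^S] f^{(S)}(v)$ strictly stable in $(z,v)$, and the step $v \to 0$ via Hurwitz' theorem is replaced by direct specialization at $v = 0 \in \overline{H}^n$, which preserves strict stability. Finally, the general degree case reduces to the multi-affine one via the factorization $T = \Pi_\gamma^\downarrow \circ \Pi(T) \circ \Pi_\kappa^\uparrow$ in \eqref{poldown}: if $G_T$ is strictly stable, then by Lemma \ref{polarized-symbol} and the strict analog of Proposition \ref{gws} the polarized symbol $G_{\Pi(T)}$ is strictly stable; whence $\Pi(T)$ preserves strict stability by the multi-affine case, and composing with $\Pi_\gamma^\downarrow$ and $\Pi_\kappa^\uparrow$ (both of which preserve strict stability) yields Theorem \ref{th-strict}.
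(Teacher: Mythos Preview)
Your overall plan matches the paper's, which simply asserts that the arguments of \S\ref{s-6} carry over to $\overline{H}$; you have correctly identified the ingredients (Grace--Walsh--Szeg\"o for closed convex domains, specialization at $v=0$ replacing the Hurwitz step, and polarization) and that the strict Lieb--Sokal lemma is where the work lies.

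There is, however, a gap in your proof that $Q$ is strictly stable when $P+wQ$ is. For interior zeros your pole argument is fine. For a boundary zero $z^*$, the ``line whose direction is purely imaginary in the coordinates where $\Im z^*_j=0$'' does not remain in $\overline H^n$ for all complex $t$: writing $t=s+ir$, the $j$-th coordinate becomes $z^*_j+it$, whose imaginary part is $s=\Re t$, so you only get the closed half-disk $\{\Re t\ge 0,\ 0<|t|<\epsilon\}$. For a \emph{simple} pole at $t=0$ the image of this half-disk under $t\mapsto c/t$ is itself a closed half-plane through the origin, and when $\arg c=\pi/2$ that half-plane is exactly $\overline H$. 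So the image need not escape $H$, and the ``covers a neighborhood of $\infty$'' step fails as stated.

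The claim is nevertheless true. A clean fix is to reduce to $n=1$ by specialization: if $Q(z^*)=0$ with, say, $\Im z^*_1=0$, freeze $z_2,\dots,z_n$ at $z^*_2,\dots,z^*_n\in\overline H$ and apply the univariate case to $\tilde P(z_1)+w\tilde Q(z_1)$; if $\tilde Q\not\equiv0$ one gets a contradiction, while $\tilde Q\equiv0$ forces $Q$ to vanish on a set meeting $H^n$, contradicting ordinary stability of $Q$. Iterating over the boundary coordinates yields the result. In the univariate case your pole argument does work, because there one approaches $z^*\in\RR$ from a full half-plane $\Im z\ge 0$, and at $\theta=0$ and $\theta=\pi$ the sign of $\Im(c/z)$ is forced to be opposite.

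Once $Q$ is known to be strictly stable, the rest of your outline is correct, including the point (which you leave implicit) that the reciprocal transform $G_T(z,w)\mapsto w^{[n]}G_T(z,-w^{-1})$ preserves strict stability: the only issue is at $w_j=0$, where one needs the leading coefficient in $w_j$ to be strictly stable, and this is again an instance of the same ``$Q$ strictly stable'' fact.
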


One can also check that the sufficiency part of Theorem \ref{th-prod-circ} 
carries over to closed circular domains and thus yields a (sufficient) 
condition for linear operators $T : \CC_\kappa[z_1,\ldots, z_n] \rightarrow 
\CC[z_1,\ldots, z_n]$ to preserve 
$\overline{C}_1\times\cdots\times\overline{C}_n$-stability, where
$C_i$, $i\in [n]$, are arbitrary (open) circular domains. For simplicity we 
only state
here a special case that extends Theorems \ref{th-strict}.

\begin{theorem}\label{th-circ-strict}
Let $\kappa \in \NN^n$, $T : \CC_\kappa[z_1,\ldots, z_n] \rightarrow 
\CC[z_1,\ldots, z_n]$ a linear operator and $\overline{C}$ a convex closed 
circular domain given by $\overline{C}=\phi^{-1}(\overline{H})$, 
where $\phi$ is a M\"obius transformation as in \eqref{mobius}. If 
$$
T\left[\big((az+b)(cw+d)+(aw+b)(cz+d) \big)^\kappa\right]
$$ 
is $\overline{C}$-stable then $T$ preserves $\overline{C}$-stability.
\end{theorem}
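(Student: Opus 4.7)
The plan is to adapt the proof of Theorem \ref{th-prod-circ} to the closed-domain setting, reducing to the closed upper half-plane $\overline{H}$ via a M\"obius change of variables and then invoking Theorem \ref{th-strict}.

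First, I would set up a closed-domain analog of Lemma \ref{translate}. Given the M\"obius transformation $\phi$ from the hypothesis, define
$$
\Phi_\kappa(f)(z_1,\ldots,z_n) = \prod_{i=1}^{n}(cz_i+d)^{\kappa_i}\, f(\phi(z_1),\ldots,\phi(z_n)),
$$
and, for any $\gamma \in \NN^n$ with $T(\CC_\kappa[z_1,\ldots,z_n]) \subseteq \CC_\gamma[z_1,\ldots,z_n]$, the analogous map $\Phi_\gamma$. Since $\overline{C}$ is convex and $\phi$ extends to a homeomorphism of Riemann spheres sending $\overline{C}$ onto $\overline{H}\cup\{\infty\}$, one checks that $\Phi_\kappa$ is a bijection of $\CC_\kappa[z_1,\ldots,z_n]$ that carries the class of strictly stable polynomials of degree at most $\kappa$ onto the class of $\overline{C}^n$-stable polynomials of degree at most $\kappa$. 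Set $T' := \Phi_\gamma^{-1}T\Phi_\kappa$; then $T$ preserves $\overline{C}^n$-stability if and only if $T'$ preserves strict stability.

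Next, following the symbol computation in the proof of Theorem \ref{th-prod-circ}, the identity
$$
T\Big[\Phi_{\kappa\oplus\kappa}\big((z+w)^{\kappa}\big)\Big] = \Phi_{\gamma\oplus\kappa}\big(T'[(z+w)^{\kappa}]\big)
$$
holds, and expanding the pullback gives
$$
\Phi_{\kappa\oplus\kappa}\big((z+w)^{\kappa}\big) = \prod_{i=1}^{n}\big((az_i+b)(cw_i+d)+(aw_i+b)(cz_i+d)\big)^{\kappa_i},
$$
which is precisely the symbol appearing in the hypothesis. Since this polynomial is assumed to be $\overline{C}^{2n}$-stable and $\Phi_{\gamma\oplus\kappa}$ is the closed-domain pullback in both blocks of variables, it follows that $G_{T'}(z,w) = T'[(z+w)^\kappa]$ is strictly stable.

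Finally, Theorem \ref{th-strict} applied to $T'$ yields that $T'$ preserves strict stability, and translating back via $\Phi_\kappa$ and $\Phi_\gamma$ concludes that $T$ preserves $\overline{C}^n$-stability. The main technical obstacle is the first step: when $\overline{C}$ is a closed disk (so $c \neq 0$ and $\phi$ has a pole at $-d/c \in \partial \overline{C}$), one must verify that $\Phi_\kappa$ correctly maps the strict-stability class into the $\overline{C}^n$-stability class. This amounts to checking at the boundary points $z_i = -d/c$ that the factor $(cz_i+d)^{\kappa_i}$ exactly cancels the pole introduced by $f(\phi(z))$ and that the resulting boundary values stay nonzero, which, as in Lemma \ref{maxsupport}, requires careful bookkeeping of top-degree coefficients.
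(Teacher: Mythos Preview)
Your outline is exactly the route the paper sketches (``the sufficiency part of Theorem~\ref{th-prod-circ} carries over''), and you are right to flag the pole $-d/c\in\partial\overline{C}$ as the only new difficulty when $\overline{C}$ is a closed disk. However, the resolution you propose does not go through. The claim that $\Phi_\gamma$ carries the strictly stable class onto the $\overline{C}^n$-stable class is false: for any $\gamma\neq 0$ the constant $h\equiv 1$ is strictly stable in $\CC_\gamma[z_1,\ldots,z_n]$, yet $\Phi_\gamma(1)=\prod_i(cz_i+d)^{\gamma_i}$ vanishes at $z_i=-d/c\in\overline{C}$. More generally, $\Phi_\gamma(h)$ is nonzero at $z_i=-d/c$ precisely when $h$ has full degree $\gamma_i$ in $z_i$, and there is no reason for $T'$ to preserve full degree. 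Unwinding, $T'(\Phi_\kappa^{-1}(g))=\Phi_\gamma^{-1}(T(g))$ has full degree in $z_i$ if and only if $T(g)$ does not vanish along $z_i=-d/c$, which is part of the very conclusion you are trying to establish; so the ``top-degree bookkeeping'' becomes circular.

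A clean repair in the closed-disk case bypasses the M\"obius step and uses compactness instead. Since the symbol is nonvanishing on the compact set $\overline{C}^{\,2n}$, it is nonvanishing on $(C_\delta)^{2n}$ for every concentric open disk $C_\delta\supset\overline{C}$ with $\delta>0$ small enough; hence by the open case (Theorem~\ref{th-prod-circ}) $T$ preserves $C_\delta$-stability for all such $\delta$. Given a $\overline{C}$-stable $g$, compactness again yields some $\delta'>0$ with $g$ nonvanishing on $(C_{\delta'})^n$; taking $\delta_0=\min(\delta,\delta')$ shows $T(g)$ is $C_{\delta_0}$-stable, in particular $\overline{C}$-stable. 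For the closed half-plane case your M\"obius argument is fine, since one may take $c=0$ and the pole issue disappears.
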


As noted in Remark \ref{unit-disk}, for the the closed unit disk 
$\overline{C}=\overline{\DD}$ the symbol used in 
Theorem \ref{th-circ-strict} is just a constant multiple of 
$T[(1+zw)^{\kappa}]$ while for a closed half-plane
bordering on the origin it is a constant multiple of 
$G_T(z,w)=T[(z+w)^\kappa]$. Note though that the conditions in 
Theorems \ref{th-strict}, \ref{th-circ-strict} are not necessary. For 
instance, the identity operator obviously preserves strict stability but its
(algebraic) symbol $(z+w)^\kappa$ is not strictly stable, cf.~(i) in 
\S \ref{s-ide}.

%





\section{Further Directions}\label{s-ide}

To conclude we mention some of the most appealing cases where 
Problems \ref{prob1}--\ref{prob2} remain open:

\begin{itemize}
\item[(i)] $\Omega$ is a closed circular domain, to begin 
with $\Omega=\overline{H}$ (cf.~Theorem \ref{th-strict});
\item[(ii)] $\Omega$ is a strip, e.g., $\Omega=\{z\in \CC:|\Im(z)|<a\}$, 
$a>0$;
\item[(iii)] $\Omega$ is a sector, e.g., 
$\Omega=\left\{z\in \CC:\frac{\pi}{2}-\alpha<\arg(z)<\frac{\pi}{2}
+\alpha\right\}$, $\alpha\in \left(0,\frac{\pi}{2}\right)$;
\item[(iv)] $\Omega$ is a closed or open strip or (double) sector.
\end{itemize}

Finally, from a complex analytic perspective and its many 
potential applications
\cite{BBCV} it would be quite useful to establish 
analogs of these results for transcendental entire functions in one or 
several variables. 

\section*{Acknowledgments}

We wish to thank the Isaac Newton Institute for Mathematical Sciences, 
University of Cambridge, for generous support during the programme  
``Combinatorics and Statistical Mechanics'' (January--June 2008), 
where this work, \cite{BB-II} and \cite{BBL} 
were presented in a series of lectures. Special thanks are due 
to the organizers and the participants in this programme for 
numerous stimulating discussions. 
We would also like to thank Alex Scott, 
B\'ela Bollob\'as and Richard Kenyon for the opportunities to announce 
these results in June--July 2008 through talks given at the 
Mathematical Institute, University of Oxford, the Centre for Mathematical 
Sciences, University of Cambridge, and the programme ``Recent Progress in 
Two-Dimensional Statistical Mechanics'', Banff International 
Research Station, respectively.

\end{document}